\documentclass[final]{siamltex}
\usepackage{amsmath}
\usepackage{mathtools}
\usepackage{amssymb}
\usepackage{amsfonts}
\usepackage{amsxtra}
\usepackage{amstext}
\usepackage{amsbsy}
\usepackage{amscd}
\usepackage{graphicx}
\usepackage{float}
\usepackage{cite}
\usepackage{xcolor}
\usepackage{srcltx} 
\usepackage{marginnote,slashbox}
\usepackage{rotating}
\definecolor{darkblue}{rgb}{0.0,0.0,0.6}
\definecolor{darkgreen}{rgb}{0.0,0.6,0.0}
\usepackage[pdftex,colorlinks=true,urlcolor=darkblue,citecolor=darkblue,linkcolor=darkblue]{hyperref}
\usepackage[utf8]{inputenc}      
\usepackage{booktabs}            
\usepackage{url}                 
\usepackage[T1]{fontenc}         
\usepackage{algorithmic}         

\usepackage{calc}
\usepackage[notcite,notref]{showkeys}

\numberwithin{table}{section}    
\numberwithin{figure}{section}   
\numberwithin{equation}{section} 

\setlength{\parindent}{0cm}
\setlength{\parskip}{1ex plus 0.5ex minus 0.5ex}
\setlength{\multlinegap}{0cm}

\usepackage{my_latex_commands}

\newtheorem{assumption}[theorem]{Assumption}
\newtheorem{remark}[theorem]{Remark}
\newenvironment{proofof}[1]{\emph{Proof of #1.}}{\endproof}

\newcommand{\maxeps}{\maxlim{_\varepsilon}}
\newcommand{\maxlim}[1]{\max\nolimits{#1}}

\newcommand{\dl}{\delta \ell}
\newcommand{\eps}{\varepsilon}
\newcommand{\df}{\delta \varphi}

\newcommand{\dd}{\delta q}
\newcommand{\yy}{q}

\renewcommand{\ll}{\bar \ell}

\newcommand{\lo}{L^2(\O)}
\newcommand{\ho}{H^1(\O)}
\newcommand{\hoo}{H^1(\O)^\ast}

\newcommand{\lii}{L^\infty((0,T)  \times \O)}
\newcommand{\li}{L^\infty( \O)}

\newcommand{\dense}{\overset{d}{\embed}}

\newcommand{\llU}{L^2(0,T;\hoo)}
\newcommand{\llu}{L^2(0,T;H^{1}(\Omega))}
\newcommand{\llz}{L^2(0,T; L^2(\Omega))}

\newcommand{\hhyn}{H^1_0(0,T;\lo)}
\newcommand{\hs}{H^1(0,T;\lo)}

\newcommand{\F}{(f \circ \HH)}
\begin{document}

\title{Optimal control of a viscous two-field damage model with fatigue
}
\date{\today}
\author{Livia\ Betz\footnotemark[1]}
\renewcommand{\thefootnote}{\fnsymbol{footnote}}
\footnotetext[1]{Faculty of Mathematics, University of W\"urzburg,  Germany.}
\renewcommand{\thefootnote}{\arabic{footnote}}

\maketitle
 
 \begin{abstract}
 Motivated by  fatigue damage models,  this paper addresses  optimal control problems governed by a non-smooth system featuring two non-differentiable mappings. This consists of a coupling between a doubly non-smooth history-dependent evolution and an elliptic PDE. After proving the directional differentiability of the associated solution mapping, an optimality system which is  stronger than the one obtained by classical smoothening procedures is derived. If one of the non-differentiable mappings becomes smooth, the optimality conditions are of strong stationary type, i.e., equivalent to the primal necessary optimality condition.\end{abstract}

\begin{keywords}
damage models with fatigue, non-smooth optimization, evolutionary VIs, optimal control of PDEs, history-dependence, strong stationarity
\end{keywords}

\begin{AMS}
34G25, 34K35, 49J20, 49J27, 74R99
\end{AMS}

\section{Introduction}\label{sec:i} 
Fatigue is considered to be the main cause of mechanical failure \cite{rl, stephens}.  It describes the weakening of a material due to repeated applied loads (fluctuating stresses, strains, forces, environmental factors, temperature, etc.), which individually would be too small to cause its malfunction \cite{alessi, stephens}.
Whether in association with environmental damage (corrosion fatigue) or elevated temperatures (creep fatigue), fatigue failure is often an   unexpected phenomenon. Unfortunately, in real situations, it is very difficult to identify the fatigue degradation state of a material, which sometimes might  result in devastating events.  Therefore, it is extremely important to find methods which allows us to describe and  control the behaviour of  materials exposed to fatigue. While there are very few papers  \cite{alessi} (damage in elastic materials)  and \cite{a1} (cohesive fracture), concerned with a rigurouos mathematical examination of models describing fatigue damage, the literature regarding the optimal control of fatigue models is practically nonexistent. All the existing results  which include the terminology ``optimal control'' in the context of fatigue damage do not address theoretical aspects nor involve  mathematical tools such as optimal control theory in Banach spaces as in the present  work, but focus on design of controllers and simulations instead, see e.g.\ \cite{oc1,oc2} and the references therein. 

In this paper we investigate the optimal control of the following viscous two-filed gradient damage problem with fatigue:
\begin{equation}\label{eq:q}
\left.
  \begin{gathered}
   \varphi(t) \in \argmin_{\varphi \in H^1(\Omega)} \EE(t,  \varphi, q(t)),
    \\ -\partial_q \EE(t,\varphi(t),q(t)) \in \partial_{\dot{q}} \RR_\epsilon (\HH(q)(t),\dot{q}(t)) \text{ in }\lo , \quad q(0) = 0,
  \end{gathered}\ \right\}
  \end{equation}a.e.\ in $(0,T)$. 
To be more precise, we prove an optimality system that is far stronger than the one obtained by classical smoothening techniques. 
  
  The main novelty concerning  \eqref{eq:q} arises from the \textit{highly non-smooth  structure}, which is  due to the non-differentiability of the dissipation $\RR_\epsilon$ in the evolution inclusion, in combination with \textit{an additional non-smooth}  fatigue degradation mapping which shall be introduced below. This excludes the application of standard adjoint techniques for the derivation of first-order necessary conditions in form of optimality  systems. 
Not only does the evolution in \eqref{eq:q} have a highly non-smooth character, but, as we will next see, it is also \textit{history-dependent}. The fact that the differential inclusion is coupled with a minimization problem (which can be reduced to an elliptic PDE) gives rise to additional challenges \cite{st_coup}.

The problem describes the evolution of damage under the influence of a time-dependent load $\ell:[0,T] \to H^1(\Omega)^\ast$ (control) acting on a body occupying the bounded Lipschitz domain $\O \subset \R^N$, $N \in \{2,3\}$. The induced 'local' and 'nonlocal' damage are expressed in terms of the functions  $q : [0,T] \to L^2(\Omega)$ and $\varphi : [0,T] \to H^{1}(\Omega)$, respectively (states). 

   In \eqref{eq:q}, the stored energy $\EE:[0,T] \times H^{1}(\Omega) \times L^2(\Omega)\rightarrow \mathbb{R}$ is given by
\begin{equation}\label{eq:e}\begin{aligned}
\EE(t, \varphi,q)&:=\frac{\alpha}{2}\|\nabla \varphi\|_{L^2(\O)}^2+\frac{\beta}{2}\| \varphi - q\|_{L^2(\O)}^2-\dual{\ell(t)}{\varphi}_{H^{1}(\Omega)},
\end{aligned}\end{equation}where $\alpha>0$ is the gradient regularization and $ \beta >0$ denotes the \textit{penalization parameter}. 
Thus, the two damage variables are connected through the penalty term $\beta$ in the stored energy, so that our model becomes  a penalized version of the viscous fatigue damage model addressed in \cite{alessi} (two-dimensional case);  note that, for simplicity reasons, we do not take a displacement variable into account. 
The type of penalization used in \eqref{eq:e} has already been proven to be successful  in the context of classical damage models (without fatigue). Firstly, it approximates the classical  single-field damage model, in the sense that, when $\beta \to \infty$, the penalized damage model coincides with the model addressed in  \cite{fn96, knees},  cf.\ \cite{paper2}. Secondly, the  penalization we use is  frequently employed in computational mechanics due to the numerical benefits offered by the additional  damage variable (see e.g.\ \cite{hackl} and the references therein). For more details, we also refer to \cite[Sec.\ 2.1-2.2]{paper1}.

The differential inclusion appearing in \eqref{eq:q} describes the evolution of the damage variable $q$
under \textit{fatigue} effects. Therein, $\HH$ is a so-called \textit{history operator}  that models how the  damage experienced by the material affects its fatigue level. 
Thus, as opposed to other  well-known damage models, cf.\ e.g.\  \cite{fn96, FK06,  knees},  the dissipation $\RR_\epsilon$ in \eqref{eq:q} is affected  by the \textit{history of the evolution}, $\HH(q)$. The parameter $\epsilon>0$ stands for the viscosity parameter, while the symbol $\partial_{\dot q}$ denotes the convex subdifferential of the functional $\RR_\epsilon$ in its second argument. Thus, the \textit{non-smooth differential inclusion} is to be understood as follows:
  $$(-\partial_q \EE(t,\varphi(t),q(t)), \eta-\dot{q}(t))_{\lo} \leq \RR_\epsilon (\HH(q)(t),\eta)- \RR_\epsilon (\HH(q)(t),\dot{q}(t)) \quad \forall\,\eta \in \lo.$$
  The viscous dissipation  $\RR_\epsilon:L^2(\O) \times  L^2(\O) \rightarrow \R$ is defined as \begin{equation}\label{def:r}
\RR_\epsilon(\omega,\eta):=\left\{ \begin{aligned}\int_\Omega f(\omega) \,  \eta \;dx +\frac{\epsilon}{2}\|\eta\|^2_{\lo}, &\quad \text{if }\eta \geq 0 \text{ a.e. in }\Omega,
\\\infty  &\quad \text{otherwise,}\end{aligned} \right.
\end{equation}and features \textit{a second non-smooth component}, namely the fatigue degradation mapping $f$.  This describes in which measure the fatigue affects the fracture toughness  of the material. This mapping is non-increasing in applications, since the higher the cumulated damage $\HH(q)$, the lower the fracture toughness $f(\HH(q))$. Whereas usually the toughness of the material  is described by a fixed  (nonnegative) constant \cite{fn96, FK06}, in the present model it changes at each point in time and space, depending on $\HH(q)$. To be more precise, the value of the fracture toughness of the body at $(t,x)$ is given by  $f(\HH(q))(t,x)$, cf.\ \eqref{def:r}. Hence, the model \eqref{eq:q} takes into account the following crucial aspect: the occurrence of damage is favoured in regions where fatigue accumulates.

 We underline that the dissipation $\RR_\epsilon$ accounts for the non-smooth nature of the evolution in the first place: even if $f$ is replaced by a (nonnegative) constant, the evolution in \eqref{eq:q} still describes a non-smooth process. The optimal control thereof is far away from being standard and  has been recently addressed in  \cite[Sec.\,4]{st_coup}, where strong stationarity for the damage model \eqref{eq:q} without fatigue is proven. By contrast, in applications which take fatigue into consideration, $f:\R^+ \to \R^+$ is constant until its  kink point is achieved, after which it monotonically decreases \cite[Sec.\ 2.6.2]{alessi1}. Thus, it is  the fatigue degradation mapping $f$ which accounts for the \textit{highly non-smooth character} of our problem.

Deriving necessary optimality conditions is a challenging issue even in finite dimensions, where a special attention is given to MPCCs  (mathematical programs with complementarity constraints).
In \cite{ScheelScholtes2000} a detailed overview of various optimality conditions of different strength was introduced, see also \cite{HintermuellerKopacka2008:1} for the infinite-dimensional case. The  most rigorous stationarity concept is strong stationarity.
Roughly speaking, the strong stationarity conditions involve an optimality system, which is equivalent to the purely primal conditions saying that the directional derivative of the reduced objective in feasible directions is nonnegative (which is referred to as B stationarity).

While there are plenty of contributions in the field of optimal control of 
smooth problems, see e.g.\ \cite{troe} and the references therein, {fewer} papers are dealing with 
non-smooth problems. Most of these papers resort to regularization or relaxation techniques to smoothen the problem, see e.g.\ \cite{ Barbu:1981:NCD,  Friedman1987, He1987} and the references therein. The optimality systems derived in this way are of intermediate strength and are not expected to be of strong stationary type, since one always loses information when passing to the limit in the regularization scheme. Thus, proving strong stationarity for optimal control of  non-smooth problems requires direct approaches, which employ the limited differentiability properties of the control-to-state map. In this context, there are even less contributions. We  refer to the pioneering work \cite{mp76} (strong stationarity  for  optimal control of elliptic VIs of obstacle type), which was followed by other papers addressing strong stationarity of various types of VIs \cite{mp84, wachsm_2014, DelosReyes-Meyer, cc, e_qvi, brok_ch}. Regarding strong stationarity for  optimal control of non-smooth PDEs, the literature is rather scarce and the only  papers known to the author addressing this issue so far are  \cite{paper, cr_meyer, mcrf, st_coup, quasi_nonsmooth}.

Let us point out the main contributions of the present work. This paper aims at deriving optimality conditions which - regarding their strength - lie between the conditions  derived by classical regularization techniques and the strong stationary ones.
Starting from an optimality system obtained via smoothening, we resort to direct methods  from   previous works \cite{st_coup, paper}, in order to improve our initial optimality conditions as far as we can. Note that this is a novel way of obtaining optimality conditions. We emphasize that, in contrast to \cite{st_coup, paper}, our state system features \textit{two} non-differentiable mappings instead of one, so that the methods from the aforementioned works are of limited applicability: Strong stationary conditions are not expected in our complex doubly non-smooth setting. If the fatigue degradation mapping is smooth, strong stationarity conditions are indeed available.
We underline that, to the best of our knowledge, optimal control problems featuring two non-differentiable functions have not been tackled so far in the literature, not even in the context of classical smoothening methods.

The paper is structured as follows. After an introduction of  the notation, section \ref{sec:c} focuses on the analysis of our fatigue damage model \eqref{eq:q}. Here we address the existence and uniqueness of solutions, by proving that \eqref{eq:q} is in fact equivalent to a PDE system. This consists of an elliptic PDE and a  highly non-smooth differential ODE. The latter one is of particular interest. It features two non-differentiable functions, namely $\max$ and the fatigue degradation function $f$; the latter appears in the argument of the initial non-smoothness, cf\,\eqref{eq:ode}. The properties of the  control-to-state operator associated to \eqref {eq:q} are investigated. In particular, we are concerned with the \textit{directional differentiability} of the solution mapping of the non-smooth state system. To the best of our knowledge, the sensitivity analysis of non-smooth differential equations containing two non-differentiable functions has never been examined in the literature. 

In section \ref{sec:o} we present the optimal control problem and investigate the existence of optimal minimizers. Then, in subsection \ref{sec:reg} we derive our first optimality conditions, by resorting to a classical smoothening method. These conditions are of intermediate strength. If the non-smoothness is inactive, they coincide with the classical KKT system. However, our first optimality system does not contain any information in those points $(t,x)$ where the non-differentiable mappings $\max$ and $f$ attain their kink points. This is namely the focus of section \ref{sec:t}, where the main result is proven in    Theorem \ref{thm:ss_qsep}. Here, the initial optimality system is improved  by employing  the  "surjectivity" trick from \cite{st_coup, paper}. The new and final optimality conditions \eqref{eq:strongstat_q} are comparatively strong (but not strong stationary). They contain information in terms of sign conditions on sets where the non-smoothness is active; these are not expected to be obtained if one just smoothens the problem, cf.\,e.g. \cite[Remark 3.9]{mcrf}.  Moreover, if the fatigue degradation function $f$ is smooth, then  \eqref{eq:strongstat_q} is of strong stationary type (Corollary \ref{cor}). For completeness, the expected (not proven) strong stationarity system associated to the doubly non-smooth state system  is presented in Section \ref{sec:comp}. Here we include a  thorough explanation as to why the methods from \cite{paper, st_coup} fail (Remark \ref{exp}). Finally, we include in Appendix \ref{sec:a}  the proof of Lemma \ref{lem:loc}, for convenience of the reader.

\subsection*{Notation}
Throughout the paper, $T > 0$ is a fixed final time. If $X$ and $Y$ are linear normed spaces, then the space of linear and bounded operators from 
$X$ to $Y$ is denoted by $\LL(X,Y)$, and $X \overset{d}{\embed} Y$ means that $X$ is densely embedded in $Y$. The dual space of  $X$ will be denoted by $X^*$. For the dual pairing between $X$ and $X^*$
we write $\dual{.}{.}_X$.  The closed ball in $X$ around $x \in X$ with radius $\alpha >0$ is denoted by $B_X(x,\alpha)$. If $X$ is a Hilbert space, we write $(\cdot,\cdot)_X$ for the associated scalar product.
The  following abbreviations will be used throughout the paper:
\begin{equation*}
\begin{aligned}
H^{1}_0(0,T;X)&:=\{z\in H^{1}(0,T;X):z(0)=0\},
\\H^{1}_T(0,T;X)&:=\{z\in H^{1}(0,T;X):z(T)=0\},
\end{aligned}
\end{equation*}where $X$ is a Banach space. The adjoint operator of a linear and continuous mapping $A $ is denoted by $A^\star.$
By  $\raisebox{3pt}{$\chi$}_{M}$ we denote the characteristic function associated to the set $M$. Derivatives w.r.t.\ time (weak derivatives of vector-valued functions) are frequently denoted by a dot.
The symbol $\partial$ stands for the convex subdifferential, see e.g.\ \cite{rockaf}.
With a little abuse of notation, the Nemystkii-operators associated with the mappings considered in this paper will be denoted by the same symbol, even when considered with different domains and ranges. The mapping $ \max \{ \cdot , 0\}$ is abbreviated by $\max(\cdot)$.  With a little abuse of notation, we use in the following  the Laplace symbol for the operator $\Delta: \ho \to \hoo$ defined by 
\begin{equation*}
 \dual{\Delta \eta}{\psi}_{\ho} := - \int_\Omega \nabla \eta  \nabla \psi \,dx \quad \forall\,
 \psi \in \ho.
\end{equation*}

\section{Properties of the control-to-state map}\label{sec:c}
This section is concerned with the investigation of the solvability and  differentiability properties of the state system \eqref{eq:q}. 

\begin{assumption}\label{assu:stand}For the mappings associated with fatigue  in \eqref{eq:q} we require the following:
 \begin{enumerate}
  \item\label{it:stand1}  The \textit{history operator} $\HH:\llz \to \llz$ satisfies 
$$\|\HH (q_1)(t)-\HH (q_2)(t)\|_{\lo} \leq L_\HH\, \int_0^t \|q_1(s)-q_2(s)\|_{\lo} \,ds \quad \ae \text{in } (0,T),$$for all  $q_1,q_2 \in \llz$, where $L_\HH>0$ is a positive constant.   Moreover, $ \HH:  \llz \to \llz$ is supposed to be G\^ateaux-differentiable with continuous derivative on $H^1(0,T;\lo)$.
\item\label{it:stand2}
The non-linear function  $f: \R \to \R$ is assumed to be Lipschitz-continuous with Lipschitz-constant $L_f>0$ and directionally differentiable.  
 \end{enumerate}
\end{assumption}
{\begin{remark}\label{rem:volt}
Assumption \ref{assu:stand}.\ref{it:stand1} is satisfied by the Volterra operator $\HH:\llz \to \llz$, defined as 
\begin{equation*}
 [0,T] \ni t \mapsto \HH(q)(t):=\int_0^t A(t-s)q(s) \,ds +q_0 \in \lo,\end{equation*}
where $A\in C([0,T]; \LL(\lo,\lo))$ and $q_0\in \lo.$
This type of operator is often employed in the study of history-depedent evolutionary variational inequalities, see e.g.\cite[Ch.\,4.4]{sofonea}.

Concerning Assumption \ref{assu:stand}.\ref{it:stand2}, we remark that non-differentiable fatigue degradation functions are very common in applications, since such mappings often display at least one  kink point, see \cite[Sec.\ 2.6.2]{alessi1}. {This basically means that once the cumulated fatigue $\HH(q)$ achieves a certain value, say $n_f$, the body suddenly starts to become weaker in terms of its fracture toughness (so that $n_f$ is a kink point of $f$). This abrupt weakening of the material is described by the monotonically decreasing mapping $f$ on the interval $[n_f,\infty)$, see \cite[Sec.\ 2.6.2]{alessi1}.}
\end{remark}}

Assumption \ref{assu:stand} is supposed to hold throughout the paper, without mentioning it every time. 


It is not difficult to check that the Nemytskii operator $f:L^2(\O) \to \lo$ is Lipschitz continuous with constant $L_f$. In view of Assumption \ref{assu:stand}.\ref{it:stand1}, we thus have  
\begin{equation}\label{eq:f_h}
\|(f \circ \HH)(q_1)(t)- (f \circ \HH)(q_2)(t)\|_{\lo}
\leq L_f    \, L_{\HH} \int_0^t \|q_1(s)-q_2(s) \|_{\lo}  \;ds 
\end{equation}\text{\ae }\text{in} $(0,T)$, for all  $q_1,q_2 \in \llz$. 
\begin{proposition}[Control-to-state map]\label{lem:ode}
For every $\ell \in \llU$, the fatigue damage problem \eqref{eq:q} admits a unique solution $(q,\varphi) \in   H^1_0(0,T;\lo) \times \llu $, which is characterized by  the following PDE system
\begin{subequations}\label{eq:syst_diff}
\begin{gather}
\dot q(t)   =   \frac{1}{\epsilon} \max \big(-\beta(q(t)-\varphi(t))-(f \circ \HH)(q)(t)\big)\ \text{ in }\lo, \quad q(0) = 0,    \label{eq:ode} 
\\- \alpha \Delta \varphi(t) + \beta \,\varphi(t)   
= \beta q(t) +\ell(t) \quad  \text{in }H^{1}(\Omega)^\ast \label{eq:el} 
\end{gather}
\end{subequations}a.e.\ in $(0,T)$. 
    \end{proposition}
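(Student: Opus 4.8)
The plan is to first establish the equivalence between the original formulation \eqref{eq:q} and the PDE system \eqref{eq:syst_diff}, and then prove well-posedness of the latter. For the equivalence of the minimization problem with the elliptic PDE \eqref{eq:el}: since $\EE(t,\cdot,q)$ is strictly convex, coercive and Fréchet-differentiable on $\ho$, the first-order optimality condition $\partial_\varphi \EE(t,\varphi(t),q(t)) = 0$ is both necessary and sufficient; computing this derivative gives precisely $-\alpha \Delta \varphi(t) + \beta\varphi(t) = \beta q(t) + \ell(t)$ in $\hoo$. For the equivalence of the differential inclusion with \eqref{eq:ode}: note that $-\partial_q \EE(t,\varphi(t),q(t)) = -\beta(q(t)-\varphi(t))$, and the convex subdifferential inclusion with $\RR_\epsilon$ defined in \eqref{def:r} reads, componentwise, as the variational inequality
\begin{equation*}
\big(-\beta(q(t)-\varphi(t)) - \F(q)(t), \eta - \dot q(t)\big)_{\lo} \le \frac{\epsilon}{2}\|\eta\|_{\lo}^2 - \frac{\epsilon}{2}\|\dot q(t)\|_{\lo}^2
\end{equation*}
for all $\eta \ge 0$ a.e.\ in $\Omega$, with $\dot q(t) \ge 0$. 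A standard pointwise argument (or the classical identity relating such VIs to a projection / $\max$ operation) shows this is equivalent to $\dot q(t) = \frac{1}{\epsilon}\max\big(-\beta(q(t)-\varphi(t)) - \F(q)(t)\big)$ a.e.; the key point is that $\xi \mapsto \frac1\epsilon\max(\xi)$ is exactly the solution operator $z \mapsto (\epsilon z + \partial I_{\{\cdot \ge 0\}}(z))^{-1}(\xi)$.

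Next I would establish existence and uniqueness for \eqref{eq:syst_diff}. The natural approach is to decouple: from \eqref{eq:el}, for fixed $q(t) \in \lo$, the Lax--Milgram theorem yields a unique $\varphi(t) = \SS(\beta q(t) + \ell(t)) \in \ho$, where $\SS \in \LL(\hoo,\ho)$ is the solution operator of $-\alpha\Delta + \beta\,\mathrm{Id}$; in particular $\varphi(t) = \beta\,\SS q(t) + \SS\ell(t)$ with $\SS : \lo \to \lo$ linear, bounded, self-adjoint and a contraction-type operator (its $\lo \to \lo$ norm is $\le 1$, in fact $\le \beta/\beta = $ bounded by the spectral estimate). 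Substituting into \eqref{eq:ode} turns the system into a single non-smooth Volterra-type ODE in $\lo$:
\begin{equation*}
\dot q(t) = \frac{1}{\epsilon}\max\Big(-\beta q(t) + \beta^2 \SS q(t) + \beta \SS\ell(t) - \F(q)(t)\Big), \quad q(0) = 0.
\end{equation*}
The right-hand side, as a map $q \mapsto \big(t \mapsto \text{RHS}\big)$ from $C([0,T];\lo)$ (or $\lo$-valued) into itself, is Lipschitz: $\max(\cdot)$ is globally $1$-Lipschitz, $q(t) \mapsto -\beta q(t) + \beta^2\SS q(t)$ is Lipschitz pointwise in $t$, and the history term obeys the causal (Volterra) estimate \eqref{eq:f_h}. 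I would then run a Banach fixed-point argument — either directly on a short time interval and extend, or, cleaner, on $C([0,T];\lo)$ equipped with an exponentially weighted norm $\|q\| = \sup_{t}e^{-\lambda t}\|q(t)\|_{\lo}$ for $\lambda$ large, exploiting that the Volterra structure of \eqref{eq:f_h} kills the history contribution in the weighted norm. This gives a unique fixed point $q \in C([0,T];\lo)$; since $\dot q(t)$ then equals a bounded (in fact $\lo$-valued, continuous after substitution) expression, $q \in H^1(0,T;\lo)$, and $q(0)=0$ gives $q \in \hy$. Feeding $q$ back through $\SS$ and using $\ell \in \llU$ yields $\varphi \in \lu$ via the elliptic regularity estimate $\|\varphi(t)\|_{\ho} \le C(\|q(t)\|_{\lo} + \|\ell(t)\|_{\hoo})$ integrated in time.

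The main obstacle I anticipate is handling the \emph{two} intertwined nonlinearities in the fixed-point step: the outer $\max(\cdot)$ composed with an argument that itself contains the non-smooth history term $\F(q) = (f\circ\HH)(q)$. The saving grace is that all relevant maps are globally Lipschitz (no smoothness needed for contraction) and that the history operator enters only through the \emph{causal} bound \eqref{eq:f_h}, so the composed right-hand side still satisfies a Volterra-type Lipschitz estimate $\|\text{RHS}(q_1)(t) - \text{RHS}(q_2)(t)\|_{\lo} \le C\big(\|q_1(t)-q_2(t)\|_{\lo} + \int_0^t\|q_1(s)-q_2(s)\|_{\lo}\,ds\big)$. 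A Gronwall / weighted-norm argument then closes the uniqueness and existence simultaneously. One should also double-check the measurability/integrability in $t$ so that the pointwise-in-$t$ identities upgrade to the stated function-space statements; this is routine given the continuity of $\SS$, $f$, $\HH$ and the $L^2(0,T)$-integrability of $\ell$.
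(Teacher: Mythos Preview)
Your proposal is correct and follows essentially the same route as the paper: first reduce the minimization to the elliptic equation via strict convexity and the first-order condition, then rewrite the subdifferential inclusion as the pointwise $\max$-ODE (the paper does this via the identity $\dot q = \frac{1}{\epsilon}(\mathbb{I} - P_{\partial_{\dot q}\RR(\HH(q)(t),0)})(g)$ and an explicit computation of the subdifferential set, which is equivalent to your resolvent/projection remark), and finally decouple through the elliptic solution operator and run a Banach fixed-point argument exploiting the Volterra estimate \eqref{eq:f_h}. The only cosmetic difference is that the paper carries out the contraction in $L^2(0,t;L^2(\Omega))$ on a short interval and concatenates, whereas you also offer the exponentially weighted supremum norm on $C([0,T];L^2(\Omega))$; both are standard and yield the same conclusion.
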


      \begin{proof}
      Let $t \in [0,T]$ and $\hat q: [0,T] \to \lo$ be arbitrary, but fixed. Since $\EE(t,\cdot,\hat q(t))$ is strictly convex, continuous and radially unbounded (see \eqref{eq:e}), the minimization problem $\min_{\varphi \in \ho} \EE(t,\cdot,\hat q(t))$ admits a unique solution $\hat \varphi(t)$ characterized by 
      $\partial_\varphi \EE(t, \hat \varphi(t), \hat q(t))=0 \ \text{in }\hoo$. In view of \eqref{eq:e}, this means that
\begin{equation}\label{eq:ell_eq}        
\hat    \varphi(t) \in \argmin_{\varphi \in H^1(\Omega)} \EE(t,  \varphi,\hat  q(t)) \Longleftrightarrow \hat \varphi(t)=\phi(\hat q(t),\ell(t)),
   \end{equation}where $\phi:\lo \times \hoo \ni (\widetilde q,\widetilde \ell) \mapsto \widetilde  \varphi \in \ho$ is the solution operator of 
          \begin{equation}\label{eq:ell}  
          - \alpha \Delta  \widetilde \varphi+ \beta \,\widetilde \varphi
= \beta \widetilde q +\widetilde \ell \quad  \text{in }H^{1}(\Omega)^\ast .
\end{equation} 
  With the map $\phi$ at hand, the evolution in \eqref{eq:q} reads 
          \begin{equation}\label{eq:ev0}  
     -\partial_q \EE(t,\phi(q(t),\ell(t)),q(t)) \in \partial_{\dot q} \RR_\epsilon (\HH(q)(t),\dot{q}(t))  \ \text{ a.e.\ in }(0,T).
            \end{equation}
            In the light of \eqref{eq:e}, \eqref{def:r}, and sum rule for convex subdifferentials, \eqref{eq:ev0} is equivalent to 
    \begin{equation}\label{eq:q_red}  
    \RR(\HH(q)(t),v)-\RR(\HH(q)(t),\dot{q}(t))+\epsilon\,(\dot{q}(t),v-\dot{q}(t))_{\lo}  \geq \beta \big(\phi(q(t),\ell(t))-q(t),v-\dot{q}(t)\big)_{\lo}       \end{equation}for all $v \in L^2(\Omega),$ a.e.\ in $(0,T)$,
where \begin{equation}\label{eq:R}\RR: L^2(\Omega)  \times L^2(\Omega) \rightarrow \R, \quad
\RR(\omega, \eta):=\begin{cases}\int_\Omega f(\omega) \eta \;dx, &\text{if }\eta \geq 0 \text{ a.e. in }\Omega,
\\\infty  &\text{otherwise.}\end{cases}
\end{equation}
Now we use the result in \cite[Lemma 3.3]{st_coup} for each time point $t$ and we see that \eqref{eq:q_red} is in fact equivalent with
   \begin{equation}\label{ode}
   \dot q(t)=\frac{1}{\epsilon} (\mathbb{I}-P_{\partial_{\dot q} \RR (\HH(q)(t),0)}) \big(g(q(t),\ell(t))\big) \ \text{ a.e.\ in }(0,T),\end{equation}
 where we abbreviate for convenience 
 \begin{equation}\label{g}
 g(q(t),\ell(t)):=\beta(\phi(q(t),\ell(t))-q(t)).
 \end{equation}
 In \eqref{ode}, $P_{\partial_{\dot q} \RR (\HH(q)(t),0)}:\lo \to \lo $ stands for the (metric) projection onto the set $\partial_{\dot q} \RR (\HH(q)(t),0)$, i.e., $P_{\partial_{\dot q} \RR (\HH(q)(t),0)} \eta$ is the unique solution of \begin{equation*}\label{eq:min} \min_{\mu \in \partial_{\dot q} \RR (\HH(q)(t),0)} \|\eta-\mu\|_{2}^2
\end{equation*} 
for any $\eta \in \lo$.
In order to compute $\partial_{\dot q} \RR (\HH(q)(t),0)$, we use the definition of the convex subdifferential and the fact that 
$\RR (\HH(q)(t),0)=0$, from which we deduce
\[\partial_{\dot q} \RR (\HH(q)(t),0)=\{\mu \in \lo | \,  (\mu,v)_{\lo} \leq \RR(\HH(q)(t),v) \quad \forall\,v \in \lo\}.\]

Now, in view of \eqref{eq:R} combined with the fundamental lemma of the calculus of variations we have 
\[\partial_{\dot q} \RR (\HH(q)(t),0)=\{\mu \in \lo | \,  \mu \leq f(\HH(q)(t))\   \ae \text{ in }\Omega \}.\]
This means that $P_{\partial_{\dot q} \RR (\HH(q)(t),0)}(\eta)=\min\{\eta,f(\HH(q)(t))\}$ and since $\eta -\min\{\eta,f(\HH(q)(t))\}=\max \{\eta-f(\HH(q)(t)),0\}$ we can finally write \eqref{ode} as 
   \begin{equation}\label{F}
\dot q(t)=\frac{1}{\epsilon} \max \{g(q(t),\ell(t))-f(\HH(q)(t)),0\} \quad \ae \text{ in } (0,T).
 \end{equation}To summarize, we have shown that the evolution in \eqref{eq:ev0} is equivalent to  \eqref{F}.

 To solve \eqref{F}, we apply a fixed-point argument. For this, we take a look at the 
 mapping $L^2(0,t;\lo) \ni \eta \mapsto \GG(\eta) \in H^1(0,t;\lo)$, given by 
\begin{equation*}\label{eq:FF}
\GG(\eta)(\tau):=\int_0^\tau  \max (g(\eta(s),\ell(s))-(f  \circ \HH)(\eta)(s)) \;ds \quad \forall\, \tau \in [0,t],
\end{equation*}
where $t \in (0,T]$ is to be determined so that $\GG:L^2(0,t;\lo) \to L^2(0,t;\lo)$ is a contraction. 
For all $q_1,q_2 \in L^2(0,t; \lo)$ the following estimate is true
\begin{equation}\label{eqq}
\begin{aligned}
 \|\GG(q_1)(\tau)&-\GG(q_2)(\tau)\|_{\lo} \leq  \int_0^\tau  \|g(q_1(s),\ell(s))-g(q_2(s),\ell(s))\|_{\lo} \,ds
 \\&\quad + \int_0^\tau \|(f \circ \HH)(q_1)(s)- (f \circ \HH)(q_2)(s)\|_{\lo} \;ds
\\ &\leq c\,  \int_0^\tau \|q_1(s)-q_2(s) \|_{\lo} \,ds+L_f    \, L_{\HH}\int_0^\tau \int_0^s \|q_1(\zeta)-q_2(\zeta) \|_{\lo} \,d\zeta \;ds
\\ &\leq c \,t^{1/2} \|q_1-q_2\|_{L^2(0,t;\lo)} + tL_f  \, L_{\HH}\|q_1-q_2\|_{L^1(0,t;\lo)}
\\ &\leq (c \,t^{1/2} + L_f\, L_{\HH} \,t^{3/2}) \|q_1-q_2\|_{L^2(0,t;\lo)}
\quad  \text{ for all }\tau \in [0,t],
\end{aligned}\end{equation}where $c>0$ is a positive constant. Here we used the fact that $\max:\lo \to \lo$ is Lipschitzian with constant $1$, the definition of $g$ (see \eqref{g}) combined with the boundedness of $\phi$, and the estimate \eqref{eq:f_h}. From \eqref{eqq} we deduce
 \begin{equation}\label{eq:kontrak}
 \|\GG(q_1)-\GG(q_2)\|_{L^2(0,t; \lo)}
\leq (c\,t + L_f \, L_{\HH}\, t^{2}) \|q_1-q_2\|_{L^2(0,t;\lo)}.\end{equation}which allows us to conclude that $\frac{1}{\epsilon}\, \GG$ is a contraction for a small enough $t$. Thus, the PDE \eqref{F} restricted on $(0,t)$ admits a unique solution in $H^1_0(0,t;\lo)$(see e.g.\,\cite[Thm.\, 7.2.3]{emm}).
Now, the unique solvability of \eqref{F} on the whole interval $(0,T)$ and the desired regularity of $q$ follows by a concatenation argument.

Finally, we recall that  $\varphi(\cdot)=\phi(q(\cdot),\ell(\cdot))$, cf.\ \eqref{eq:ell_eq} and  we deduce from \eqref{eq:ell}  that     $\varphi \in \llu$. To summarize, we obtained that \eqref{eq:q} admits a unique solution $(q,\varphi)\in  H^1_0(0,T;\lo) \times \llu $, which, owing to \eqref{eq:ell_eq} and \eqref{F}, is characterized by  \eqref{eq:syst_diff}.
\end{proof}

\begin{lemma}\label{lem:lip_S}The solution map associated to \eqref{eq:q} $$S: \llU \ni \ell \mapsto (q,\varphi) \in   \hhyn \times \llu$$ is Lipschitz continuous.
\end{lemma}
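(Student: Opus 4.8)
The plan is to start from the equivalent PDE characterization \eqref{eq:syst_diff} furnished by Proposition \ref{lem:ode} and combine the standard a priori estimate for the elliptic equation \eqref{eq:el} with a Gronwall argument for the non-smooth ODE \eqref{eq:ode}. Fix $\ell_1,\ell_2 \in \llU$ with associated states $(q_1,\varphi_1)=S(\ell_1)$, $(q_2,\varphi_2)=S(\ell_2)$, and abbreviate $\dd := q_1-q_2$, $\df := \varphi_1-\varphi_2$, $\dl := \ell_1-\ell_2$.

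First I would treat the elliptic part. Subtracting the two realizations of \eqref{eq:el} and testing with $\df(t)$, coercivity of $-\alpha\Delta+\beta$ on $\ho$ (equivalently, boundedness and linearity of the solution operator $\phi$ of \eqref{eq:ell}) yields the pointwise-in-time bound
\[
 \|\df(t)\|_{\ho} \leq c\big(\|\dd(t)\|_{\lo} + \|\dl(t)\|_{\hoo}\big) \quad \text{a.e.\ in }(0,T),
\]
and in particular $\|\df\|_{\llu} \leq c\big(\|\dd\|_{\llz} + \|\dl\|_{\llU}\big)$. Hence it suffices to show $\|\dd\|_{\hhyn} \leq c\,\|\dl\|_{\llU}$, and then combine the two estimates.

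Next, subtracting the two realizations of \eqref{eq:ode} and using that $\max(\cdot):\lo\to\lo$ is Lipschitz with constant $1$ gives, a.e.\ in $(0,T)$,
\[
 \|\dot\dd(t)\|_{\lo} \leq \tfrac{1}{\epsilon}\Big(\beta\|\dd(t)\|_{\lo} + \beta\|\df(t)\|_{\lo} + \|\F(q_1)(t)-\F(q_2)(t)\|_{\lo}\Big).
\]
Inserting the elliptic estimate for $\|\df(t)\|_{\lo}$, the history estimate \eqref{eq:f_h} for the last term, and $\|\dd(t)\|_{\lo}\leq\int_0^t\|\dot\dd(s)\|_{\lo}\,ds$ (which holds since $q_1(0)=q_2(0)=0$), and absorbing the iterated integral coming from \eqref{eq:f_h} via $\int_0^t\int_0^s(\cdot)\,d\zeta\,ds\leq T\int_0^t(\cdot)\,ds$, I obtain
\[
 \|\dot\dd(t)\|_{\lo} \leq c\int_0^t \|\dot\dd(s)\|_{\lo}\,ds + c\,\|\dl(t)\|_{\hoo} \quad \text{a.e.\ in }(0,T).
\]
Gronwall's lemma in integral form then yields $\|\dot\dd(t)\|_{\lo}\leq c\,\|\dl(t)\|_{\hoo}+c\int_0^t\|\dl(s)\|_{\hoo}\,ds$ a.e.; squaring, integrating over $(0,T)$, and estimating the integral term by Cauchy--Schwarz gives $\|\dot\dd\|_{\llz}\leq c\,\|\dl\|_{\llU}$. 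Since $\dd(0)=0$, Poincar\'e's inequality in time gives $\|\dd\|_{\hhyn}\leq c\,\|\dot\dd\|_{\llz}$, and combining with the elliptic bound finishes the proof.

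The argument is essentially routine; the only point requiring a little care is the bookkeeping of norms so that the final bound involves the $\llU$-norm of $\dl$ rather than an $L^\infty$- or $L^1$-in-time norm, which is what the Cauchy--Schwarz step after Gronwall takes care of. The history operator contributes only the harmless extra iterated integral noted above, so its presence does not change the structure of the estimate.
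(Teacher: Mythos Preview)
Your argument is correct and follows essentially the same route as the paper: both start from the PDE characterization in Proposition~\ref{lem:ode}, combine the linear elliptic estimate for $\phi$ with the Lipschitz property of $\max$ and the history bound \eqref{eq:f_h}, and close via Gronwall. The only cosmetic difference is that the paper applies Gronwall to $t\mapsto\|\dd(t)\|_{\lo}$ and afterwards re-inserts into \eqref{eq:ode} to bound $\dot\dd$, whereas you apply Gronwall directly to $t\mapsto\|\dot\dd(t)\|_{\lo}$ after writing $\|\dd(t)\|_{\lo}\le\int_0^t\|\dot\dd(s)\|_{\lo}\,ds$; both orderings yield the same estimate.
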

\begin{proof}
Let $\ell_1,\ell_2 \in \llU$ be arbitrary, but fixed. In the following, we abbreviate $(q_i,\varphi_i):=S(\ell_i)$ and $g(q_i(\cdot),\ell_i(\cdot)):=\beta(\phi(q_i(\cdot),\ell_i(\cdot))-q_i(\cdot)),\ i=1,2$, where $\phi$ is the solution operator of \eqref{eq:ell} . In view of Proposition \ref{lem:ode} combined with \eqref{eq:f_h}, we obtain
\begin{align*}
\|(q_1-q_2)(t)\|_{\lo}  &\leq  \frac{1}{\epsilon} \int_0^t     \|g(q_1(s),\ell_1(s))-g(q_2(s),\ell_2(s))\|_{\lo} \,ds
 \\&\quad +  \frac{1}{\epsilon} \int_0^t \|(f \circ \HH)(q_1)(s)-(f \circ \HH)(q_2)(s)\|_{\lo} \;ds
\\ &\leq c\,  \int_0^t \|q_1(s)-q_2(s) \|_{\lo} +\|\ell_1(s)-\ell_2(s) \|_{\hoo} \,ds
\\&\quad +\frac{1}{\epsilon}L_f    \, L_{\HH}\int_0^t \int_0^s \|q_1(\zeta)-q_2(\zeta) \|_{\lo} \,d\zeta \;ds \quad \forall\,t \in [0,T],
\end{align*}where $c>0$ is a constant dependent only on the given data. Then, applying  Gronwall's inequality leads to 
$$
\|(q_1-q_2)(t)\|_{\lo} \leq \hat c   \int_0^t \|\ell_1(s)-\ell_2(s) \|_{\hoo} \,ds \quad \forall\,t \in [0,T],
$$where $\hat c>0$ is a constant dependent only on the given data. By employing again \eqref{eq:ode} and by estimating as above without integrating over time, we obtain 
\begin{equation}\label{q_lip}
\|q_1-q_2\|_{H^1(0,T;\lo)} \leq \widetilde c \, \|\ell_1-\ell_2 \|_{\llU},
\end{equation}where $\widetilde c>0$ is another constant dependent only on the given data. 
Now, the desired result follows from  $\varphi_i=\phi(q_i, \ell_i),\ i=1,2$, $\phi \in \LL(\lo \times \hoo,\ho)$ and \eqref{q_lip}.
\end{proof}

\begin{lemma}\label{lem:f_h}
The mapping $\F:\llz \to \llz$ is Hadamard directionally differentiable with
 \begin{equation}\label{eq:f_h_deriv}
\F'(\eta;\delta \eta)=f'(\HH(\eta);\HH'(\eta)(\delta \eta)) \quad \forall\,\eta,\delta \eta \in \llz.
\end{equation}Moreover, for all $\eta, \delta \eta_1, \delta \eta_2 \in \llz$, it holds
\begin{equation}\label{eq:h'}
\|\F'(\eta;\delta \eta_1)(t)-\F'(\eta;\delta \eta_2)(t)\|_{\lo)} \leq L_f\,L_{\HH}  \int_0^t \|\delta \eta_1(s)-\delta \eta_2(s)\|_{\lo} \,ds
\end{equation} \ae \text{in }$(0,T)$.
\end{lemma}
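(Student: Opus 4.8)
The plan is to factor $\F=f\circ\HH$, handle the outer Nemytskii map $f$ and the inner history operator $\HH$ separately, and then combine them through the chain rule for Hadamard directionally differentiable maps; throughout, I identify $\llz$ with $L^2((0,T)\times\Omega)$ and regard $f$ as the Nemytskii operator induced by the scalar function. First I would note that, since $f:\R\to\R$ is $L_f$-Lipschitz and directionally differentiable, it is automatically \emph{Hadamard} directionally differentiable as a scalar map: for $a,b\in\R$, $t_n\downarrow0$, $b_n\to b$, the splitting $t_n^{-1}(f(a+t_nb_n)-f(a))=t_n^{-1}(f(a+t_nb_n)-f(a+t_nb))+t_n^{-1}(f(a+t_nb)-f(a))$ shows the first term is bounded by $L_f|b_n-b|\to0$ and the second tends to $f'(a;b)$; the same device gives that $b\mapsto f'(a;b)$ is $L_f$-Lipschitz uniformly in $a$, with $|f'(a;b)|\le L_f|b|$. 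To lift this to $\llz$, fix $w\in\llz$ and take $t_n\downarrow0$, $v_n\to v$ in $\llz$; along any subsequence one may assume $v_n\to v$ a.e.\ with $|v_n|\le g$ for some $g\in\llz$, so that $t_n^{-1}(f(w+t_nv_n)-f(w))\to f'(w(\cdot);v(\cdot))$ a.e.\ by the scalar Hadamard property, while the difference quotients are dominated by $L_fg\in\llz$ and $|f'(w(\cdot);v(\cdot))|\le L_f|v|\in\llz$; dominated convergence together with the subsequence principle then yield convergence in $\llz$. Hence $f:\llz\to\llz$ is Hadamard directionally differentiable at every $w$ with derivative $v\mapsto f'(w(\cdot);v(\cdot))$, which is moreover $L_f$-Lipschitz on $\llz$.

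Next, squaring and integrating the Volterra-type estimate in Assumption \ref{assu:stand}.\ref{it:stand1} shows that $\HH:\llz\to\llz$ is globally Lipschitz; combined with its assumed G\^ateaux differentiability (via the same splitting as above, now applied to $\HH$) this makes $\HH$ Hadamard differentiable, hence Hadamard directionally differentiable with $\HH'(\eta;\delta\eta)=\HH'(\eta)(\delta\eta)$. Composing the two: for $t_n\downarrow0$ and $\delta\eta_n\to\delta\eta$ in $\llz$, the difference quotients $w_n:=t_n^{-1}(\HH(\eta+t_n\delta\eta_n)-\HH(\eta))$ converge in $\llz$ to $\HH'(\eta)(\delta\eta)$, whence $t_n^{-1}(f(\HH(\eta)+t_nw_n)-f(\HH(\eta)))\to f'(\HH(\eta)(\cdot);\HH'(\eta)(\delta\eta)(\cdot))$ by the first step; this is precisely the chain rule and gives \eqref{eq:f_h_deriv}.

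For the estimate \eqref{eq:h'}, the $L_f$-Lipschitz continuity of $b\mapsto f'(a;b)$ together with the linearity of $\HH'(\eta)$ give, for a.e.\ $t$, $\|\F'(\eta;\delta\eta_1)(t)-\F'(\eta;\delta\eta_2)(t)\|_{\lo}\le L_f\,\|\HH'(\eta)(\delta\eta_1-\delta\eta_2)(t)\|_{\lo}$. It then remains to prove the differentiated Volterra bound $\|\HH'(\eta)(\zeta)(t)\|_{\lo}\le L_\HH\int_0^t\|\zeta(s)\|_{\lo}\,ds$ for a.e.\ $t$, which follows by inserting $q_1=\eta+s\zeta$ and $q_2=\eta$ into Assumption \ref{assu:stand}.\ref{it:stand1}, dividing by $s>0$, passing $s\downarrow0$ along a subsequence of difference quotients converging a.e.\ in $(0,T)$ to $\HH'(\eta)(\zeta)$, and using lower semicontinuity of the norm.

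I expect the only genuine technical point to be the lifting in the first paragraph — the interaction of $L^2$-convergence $v_n\to v$, the extraction of an a.e.\ convergent subsequence dominated by an $L^2$-function, and the pointwise Hadamard property of $f$ — together with the closely related need to apply the chain rule in the Hadamard (not merely G\^ateaux) sense, which is exactly what forces the observation that $\HH$, being Lipschitz and G\^ateaux differentiable, is automatically Hadamard differentiable. The remaining manipulations are routine.
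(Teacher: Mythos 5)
Your proof is correct, and the first half (Hadamard directional differentiability of $f$ as a Nemytskii operator via Lipschitz $+$ directional differentiability, followed by the chain rule with $\HH$) matches the paper's route, which simply cites the relevant literature for these steps rather than reproving them. Where you genuinely diverge is in the derivation of \eqref{eq:h'}. The paper applies \eqref{eq:f_h} to $q_i=\eta+\tau\,\delta\eta_i$, divides by $\tau$, and passes to the limit $\tau\searrow 0$ in the resulting inequality, using directional differentiability of $\F$ together with a.e.\ convergence of an $\llz$-convergent subsequence; the constant $L_f L_\HH$ is inherited wholesale from \eqref{eq:f_h}. You instead work on the level of the derivative formula \eqref{eq:f_h_deriv}: you first isolate the $L_f$-Lipschitz continuity of $b\mapsto f'(a;b)$ and the linearity of $\HH'(\eta)$ to reduce \eqref{eq:h'} to the differentiated Volterra bound $\|\HH'(\eta)(\zeta)(t)\|_{\lo}\le L_\HH\int_0^t\|\zeta(s)\|_{\lo}\,ds$, and then prove that bound by differentiating Assumption \ref{assu:stand}.\ref{it:stand1} directly. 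Both routes are sound and of comparable length. The paper's version is marginally slicker because it avoids ever needing the pointwise Lipschitz estimate for $f'(a;\cdot)$; yours makes the origin of each factor transparent and establishes the Volterra bound for $\HH'(\eta)$ as a reusable intermediate fact (and indeed essentially the same bound reappears implicitly when the paper treats $(f_\eps\circ\HH)'$ and its adjoint later on). One small cosmetic issue: in your last paragraph the substitution $q_1=\eta+s\zeta$ reuses $s$ both as the scalar increment and as the integration variable in the Volterra estimate; renaming the increment parameter (say to $\sigma$) would remove the collision. Also, after extracting an a.e.\ convergent subsequence of difference quotients you obtain equality of norms in the limit rather than merely lower semicontinuity, though the inequality you want follows either way.
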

\begin{proof}
In view of the differentiability properties of $\HH$ and $f$, the mapping $\F:\llz \to \llz$ is 
Hadamard directionally differentiable  \cite[Def.\ 3.1.1, Lem.\ 3.1.2(b)]{schirotzek}. To see this, we first note that $f:\llz \to \llz$ is Hadamard directionally differentiable, since it is directionally differentiable (by Assumption \ref{assu:stand}.\ref{it:stand2} and Lebesgue's dominated convergence theorem, see e.g.\, \cite[Lemma A.1]{susu_dam}) and Lipschitz-continuous.  In view of Assumption \ref{assu:stand}.\ref{it:stand1}, chain rule \cite[Prop.\ 3.6(i)]{shapiro} implies that $\F$ is Hadamard directionally differentiable as well, with directional 
derivative given by \eqref{eq:f_h_deriv}. To prove \eqref{eq:h'}, we observe that, 
as a consequence of \eqref{eq:f_h}, we  have 
$$\frac{1}{\tau}\|\F (\eta+ \tau \delta \eta_1)(t)-\F (\eta+\tau \delta \eta_2)(t)\|_{\lo} \leq  L_f\, L_\HH\, \int_0^t \|\delta \eta_1(s)-\delta \eta_2(s)\|_{\lo} \,ds $$$\ae $in $(0,T)$, for all  $\eta, \delta \eta_1,\delta \eta_2 \in \llz$ and all $\tau>0$. Passing to the limit $\tau \searrow 0$, where one uses the  directional differentiability of $f \circ \HH$ and  the fact that convergence in $\llz$ implies $\ae$ convergence in $\lo$ for a subsequence,
then  yields  
the desired estimate.
\end{proof}

\begin{proposition}[Directional differentiability]\label{lem:rd}
The operator $S: \llU \to   \hhyn \times \llu$ is  directionally differentiable. 
Its directional derivative $(\dd,\df):=S'(\ell;\dl)$ at $\ell  \in \llU$ in direction $\dl \in \llU$ is the unique solution of
 \begin{subequations}\label{eq:ode_lin_q}\begin{gather}
\dot \dd(t)=  \frac{1}{\epsilon} \maxlim ' \big(z(t); -\beta(\dd(t)-\df(t))-f'(\HH(q);\HH'(q)(\delta q))(t)\big)\ \text{ in }\lo, \ \, \dd(0)=0,\label{eq:ode_lin_q1} 
\\  - \alpha \Delta \df(t) + \beta \,\df(t) = \beta \dd(t) +\dl(t) \quad  \text{in }H^{1}(\Omega)^\ast \label{eq:ode_lin_q2}
\end{gather}
 \end{subequations}a.e.\ in $(0,T)$, where we abbreviate $ z(t):=-\beta(q(t)-\varphi(t))-(f \circ \HH)(q)(t)$.   \end{proposition}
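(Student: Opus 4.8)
\emph{Proof idea.} The plan is to proceed in two stages: first establish unique solvability of the linearized system \eqref{eq:ode_lin_q}, then show that the difference quotients of $S$ converge to its solution. For the first stage I would argue exactly as in the proof of Proposition \ref{lem:ode}. Indeed, $\max'(z(t);\cdot):\lo\to\lo$ is positively homogeneous and Lipschitz with constant $\le1$ (as a Nemytskii operator), the map $\eta\mapsto f'(\HH(q);\HH'(q)(\eta))$ obeys the Volterra-type estimate \eqref{eq:h'} (specialized to the fixed base point $q$), and $\eta\mapsto\beta(\phi(\eta,\dl)-\eta)$ is affine and bounded, with $\phi$ the solution operator of \eqref{eq:ell}, cf.\ \eqref{g}. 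Hence the integral operator
\[\eta\mapsto\int_0^\cdot\frac1\epsilon\,\max'\big(z(s);\;\beta(\phi(\eta(s),\dl(s))-\eta(s))-f'(\HH(q);\HH'(q)(\eta))(s)\big)\,ds\]
is a contraction on $L^2(0,t;\lo)$ for $t$ small enough (the same computation as in \eqref{eqq}--\eqref{eq:kontrak}), and a concatenation argument produces a unique $\dd\in\hhyn$; then $\df:=\phi(\dd,\dl)\in\llu$ closes \eqref{eq:ode_lin_q2}.

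For the second stage, fix $\ell,\dl\in\llU$, set $\ell_\tau:=\ell+\tau\dl$, $(q_\tau,\varphi_\tau):=S(\ell_\tau)$, $(q,\varphi):=S(\ell)$, and form $\dd_\tau:=\tau^{-1}(q_\tau-q)\in\hhyn$, $\df_\tau:=\tau^{-1}(\varphi_\tau-\varphi)$ (legitimate by Proposition \ref{lem:ode}). Linearity of $\phi$ yields $\df_\tau=\phi(\dd_\tau,\dl)$ exactly, so the elliptic part \eqref{eq:el} is harmless and it remains to prove $\dd_\tau\to\dd$ in $\hhyn$. Abbreviating $a(t):=z(t)=\beta(\varphi(t)-q(t))-(f\circ\HH)(q)(t)$, $a_\tau(t):=\beta(\varphi_\tau(t)-q_\tau(t))-(f\circ\HH)(q_\tau)(t)$ and $b_\tau:=\tau^{-1}(a_\tau-a)$, equation \eqref{eq:ode}/\eqref{F} gives $\epsilon\dot\dd_\tau(t)=\tau^{-1}\big(\max(a(t)+\tau b_\tau(t))-\max(a(t))\big)$, while \eqref{eq:ode_lin_q1} gives $\epsilon\dot\dd(t)=\max'(a(t);b(t))$ with $b(t):=\beta(\df(t)-\dd(t))-(f\circ\HH)'(q;\dd)(t)$. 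With $r_\tau:=\dd_\tau-\dd$ the decisive step is the splitting
\[\epsilon\,\dot r_\tau(t)=\tfrac1\tau\big(\max(a(t)+\tau b_\tau(t))-\max(a(t)+\tau b(t))\big)+\Big(\tfrac1\tau\big(\max(a(t)+\tau b(t))-\max(a(t))\big)-\max'(a(t);b(t))\Big).\]
The first term is bounded in $\lo$ by $\|b_\tau(t)-b(t)\|_{\lo}$ because $\max:\lo\to\lo$ is $1$-Lipschitz; the second, call it $II_\tau(t)$, satisfies $\|II_\tau\|_{\llz}\to0$ as $\tau\searrow0$ by directional differentiability of the Nemytskii map $\max$ (pointwise a.e.\ in $\Omega$, dominated by $2|b(t)|$, then dominated convergence in $t$, $b$ being fixed).

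It then remains to control $b_\tau-b$ by $r_\tau$ plus a vanishing term. Since $\df_\tau-\df=\phi(r_\tau,0)$, the penalty contribution $\beta(\phi(r_\tau,0)-r_\tau)$ is bounded in $\lo$ by $c\|r_\tau(t)\|_{\lo}$; for the fatigue contribution, write $q_\tau=q+\tau\dd_\tau$ and split
\[\tfrac1\tau\big((f\circ\HH)(q_\tau)-(f\circ\HH)(q)\big)(t)-(f\circ\HH)'(q;\dd)(t)=\tfrac1\tau\big((f\circ\HH)(q+\tau\dd_\tau)-(f\circ\HH)(q+\tau\dd)\big)(t)+e_\tau(t),\]
with $e_\tau(t):=\tfrac1\tau\big((f\circ\HH)(q+\tau\dd)-(f\circ\HH)(q)\big)(t)-(f\circ\HH)'(q;\dd)(t)$: by \eqref{eq:f_h} the first summand is bounded in $\lo$ by $L_fL_\HH\int_0^t\|r_\tau(s)\|_{\lo}\,ds$ (no differentiability is invoked here, which is what avoids circularity), while $\|e_\tau\|_{\llz}\to0$ by the directional differentiability of $f\circ\HH$ from Lemma \ref{lem:f_h} applied along the \emph{fixed} direction $\dd$. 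Collecting all estimates and integrating ($r_\tau(0)=0$), the function $\rho_\tau(t):=\|r_\tau(t)\|_{\lo}$ obeys
\[\rho_\tau(t)\le\int_0^t\|\dot r_\tau(s)\|_{\lo}\,ds\le\frac{C}{\epsilon}\int_0^t\Big(\rho_\tau(s)+\int_0^s\rho_\tau(\zeta)\,d\zeta\Big)ds+\frac{C}{\epsilon}\big(\|e_\tau\|_{\llz}+\|II_\tau\|_{\llz}\big),\]
so a Gronwall argument (absorbing the iterated integral, as in Lemma \ref{lem:lip_S}) gives $\rho_\tau\to0$ uniformly on $[0,T]$; reinserting this into the pointwise bound for $\dot r_\tau$ shows $\dot r_\tau\to0$ in $\llz$, i.e.\ $\dd_\tau\to\dd$ in $\hhyn$, whence $\df_\tau=\phi(\dd_\tau,\dl)\to\phi(\dd,\dl)=\df$ in $\llu$. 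Since $(\dd,\df)$ was shown to be unique in stage one, the limit is independent of the sequence $\tau\searrow0$, which is precisely directional differentiability of $S$ with $S'(\ell;\dl)=(\dd,\df)$.

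The main obstacle I expect is exactly this splitting of the $\max$-difference when the \emph{second} non-smoothness sits inside its argument: one cannot differentiate $(f\circ\HH)(q_\tau)$ directly, since $q_\tau$ itself varies with $\tau$, so the remedy is to exploit the global $1$-Lipschitz continuity of $\max$ (so that $b_\tau-b$ enters only linearly) together with the Volterra estimate \eqref{eq:f_h} and Lemma \ref{lem:f_h}, which reduce the nested differentiation to one along the fixed direction $\dd$. Carrying these two nested non-smoothnesses through without ever using more than Lipschitz continuity is the delicate part; everything else is a Gronwall estimate of the same flavour as in Proposition \ref{lem:ode} and Lemma \ref{lem:lip_S}.
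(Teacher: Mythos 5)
Your proof is correct and follows the same overall strategy as the paper's: well-posedness of the linearized system \eqref{eq:ode_lin_q} via the contraction argument of Proposition~\ref{lem:ode}, then a Gronwall estimate for the remainder of the difference quotient, driven by the $1$-Lipschitz continuity of $\max:\lo\to\lo$, the Volterra-type estimate \eqref{eq:f_h}, and directional differentiability along \emph{fixed} directions to make the error terms vanish.

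The one place where you genuinely deviate is the decomposition of the ODE residual. The paper introduces the composite map $\max\circ G$, proves its (Hadamard) directional differentiability by citing the chain rule, and then writes the residual as a Lipschitz-controlled term plus a single composite remainder $A_\tau$ that vanishes by that differentiability; this requires first establishing Hadamard differentiability of both $\max$ and $f\circ\HH$ (Lemma~\ref{lem:f_h}) so that the chain rule applies. You instead peel off the two non-smoothnesses one at a time: the $\max$-splitting yields a remainder $II_\tau$ that vanishes by \emph{plain} directional differentiability of $\max$ along the fixed direction $b$, while the inner $(f\circ\HH)$-splitting yields a remainder $e_\tau$ that vanishes by directional differentiability of $f\circ\HH$ along the fixed direction $\dd$; the Gronwall-type contribution to $b_\tau-b$ comes solely from \eqref{eq:f_h}. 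This is marginally more elementary --- the Hadamard/chain-rule machinery is never invoked, only directional differentiability at a fixed base point and direction --- at the cost of a somewhat longer hand computation. Both decompositions produce the same integral inequality
\(\rho_\tau(t)\le C\int_0^t\big(\rho_\tau(s)+\int_0^s\rho_\tau(\zeta)\,d\zeta\big)\,ds+C\eta_\tau\)
and the same conclusion. Two cosmetic remarks: your closing appeal to uniqueness of $(\dd,\df)$ to assert independence of the sequence $\tau\searrow 0$ is superfluous, since the Gronwall estimate already gives convergence for the full net $\tau\searrow 0$; and note that the paper's intermediate point is $G(q+\tau\dd,\ell+\tau\dl)$ (which retains $(f\circ\HH)(q+\tau\dd)$ un-linearized), whereas yours is $z+\tau b$ (which linearizes $(f\circ\HH)$), so the two remainders $A_\tau$ and $(II_\tau,e_\tau)$ are not literally the same object, even though both bookkeepings succeed.
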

\begin{proof}
We start by examining the solvability of \eqref{eq:ode_lin_q}. To this end,  we just check that the 
 mapping $L^2(0,t;\lo) \ni \eta \mapsto \widehat \GG(\eta) \in H^1(0,t;\lo)$, given by 
\begin{equation*}\label{eq:FF1}
\widehat \GG(\eta)(\tau):=\int_0^\tau  \maxlim ' \big(z(s); -\beta(\eta(s)-\phi(\eta(s), \dl(s))-f' \big(\HH(q);\HH'(q)(\eta)\big)(s)\big) \;ds
\end{equation*}for all $\tau \in [0,t]$, is Lipschitzian from $L^2(0,t;\lo)$ to $L^2(0,t;\lo)$ with constant smaller than $\epsilon$, for 
$t \in (0,T]$ small enough. Then, by using the arguments employed at the end of  the  proof of Proposition \ref{lem:ode}, we can deduce that, for any $\dl \in \llU$, \eqref{eq:ode_lin_q} admits a unique solution $(\dd, \df ) \in \hhyn \times \llu$.  
For all $\eta_1,\eta_2 \in L^2(0,t; \lo)$ the following estimate is true
\begin{align*}
 \|\widehat \GG(\eta_1)(\tau)&-\widehat \GG(\eta_2)(\tau)\|_{\lo} \leq  \int_0^\tau  \|g(\eta_1(s),\dl(s))-g(\eta_2(s),\dl(s))\|_{\lo} \,ds
 \\&\quad + \int_0^\tau \|f'( \HH(q);\HH'(q;\eta_1))(s)-f'( \HH(q);\HH'(q;\eta_2))(s)\|_{\lo} \;ds
\\ &\leq c\,  \int_0^\tau \|\eta_1(s)-\eta_2(s) \|_{\lo} \,ds+L_f    \, L_{\HH}\int_0^\tau \int_0^s \|\eta_1(\zeta)-\eta_2(\zeta) \|_{\lo} \,d\zeta \;ds
\\ &\leq c \,t^{1/2} \|\eta_1-\eta_2\|_{L^2(0,t;\lo)} + tL_f  \, L_{\HH}\|\eta_1-\eta_2\|_{L^1(0,t;\lo)}
\\ &\leq (c \,t^{1/2} + L_f\, L_{\HH} \,t^{3/2}) \|\eta_1-\eta_2\|_{L^2(0,t;\lo)}
\quad  \text{ for all }\tau \in [0,t],
\end{align*}where $c>0$ is a positive constant; note that here we abbreviated again $g(\eta_i(\cdot),\dl(\cdot)):=\beta(\phi(\eta_i(\cdot),\dl(\cdot))-\eta_i(\cdot)),\ i=1,2$. Here we used the fact that $\max'(z(s), \cdot):\lo \to \lo$ is Lipschitzian with constant $1$, the  boundedness of $\phi$ (see \eqref{eq:ell}), and \eqref{eq:h'} in combination with \eqref{eq:f_h_deriv}. Then, we obtain an estimate similar to \eqref{eq:kontrak} which allows us to conclude the fact that $\frac{1}{\epsilon} \widehat \GG$ is a contraction.

Next we focus on the convergence of the difference quotients associated with the mapping $S$.
We begin by observing that the operator $\max:\llz \to \llz$  is Hadamard directionally differentiable  \cite[Def.\ 3.1.1, Lem.\ 3.1.2(b)]{schirotzek}, since it is directionally differentiable (by Lebesgue's dominated convergence theorem, see e.g.\ \cite[Lem.\ A.1]{susu_dam}) and Lipschitz-continuous.  
  Moreover, 
\begin{equation*}\label{G}
G:(\eta,\psi) \mapsto -\beta(\eta-\phi(\eta,\psi))-(f \circ \HH)(\eta)\end{equation*} is directionally differentiable from $\llz \times \llU$ to $\llz$, since $\phi$ is linear and bounded between these spaces (cf.\ \eqref{eq:ell}) and  as a result of Lemma \ref{lem:f_h}. Now chain rule \cite[Prop.\ 3.6(i)]{shapiro} implies that $$\FF:=\max \circ \, G$$ is (Hadamard) directionally differentiable from $L^2(0,T;\lo) \times \llU$ to  $L^2(0,T;\lo)$ with
 \begin{equation*}\label{eq:g_d}
  \FF'((q,\ell);(\dd, \dl))=\maxlim '\big(G(q,\ell);G'((q,\ell);(\dd, \dl))\big)   \end{equation*}
  for all $(q,\ell),(\dd, \dl) \in  L^2(0,T;\lo) \times \llU$.
For simplicity, in the following we abbreviate $q^{\tau}:=S_1(\ell + \tau \,\dl)$, where $\tau>0$ is arbitrary, but fixed. $S_1$ denotes the first component of the map $S$, i.e.,\ $S_1: \llU \to \hhyn$ is the solution map associated with \eqref{F}.
By combining the equations for $q^{\tau}$, $q$ and \eqref{eq:ode_lin_q}, we obtain
  \begin{equation}\label{eq:www} \begin{aligned}
 \frac{d}{dt} \Big(\frac{q^\tau-q}{\tau}-\delta q \Big)&=\frac{\FF(q^\tau,\ell + \tau \,\dl) -\FF(q,\ell) }{\tau}-\FF'\big((\yy,\ell);(\delta q,\dl)\big) \quad \text{\ae in } (0,T),  
 \\  \Big(\frac{q^\tau-q}{\tau}-\delta q\Big)(0) &= 0.
 \end{aligned}
 \end{equation}This implies  
   \begin{equation}\label{eq:awalin-}
 \begin{aligned}
&\Big\|\Big(\frac{q^\tau-q}{\tau}-\delta q \Big)(t)\Big\|_{\lo}
\\  &\  \leq \int_0^t \Big\|\frac{\FF \big (q^\tau,\ell+ \tau \,\dl \big)(s) -\FF\big((q, \ell )+\tau (\delta q,\dl) \big) (s) }{\tau}\Big\|_{\lo}
\\ &\qquad + \Big\| \underbrace{ \frac{\FF\big((q, \ell )+\tau (\delta q,\dl) \big) (s)-\FF(q,\ell )(s)}{\tau}
 -\FF'\big((\yy,\ell);(\delta q,\dl)\big)(s)  }_{=:A_\tau(s)}  \Big\|_{\lo} \,ds
  \\  &\  \leq \int_0^t  \Big\|\frac{G(q^\tau, \ell +\tau \dl)(s)-G((q, \ell )+\tau (\delta q,\dl) \big) (s)}{\tau}\Big\|_{{\lo}} \,ds+ \|A_\tau\|_{L^1(0,t;\lo)}
 \\  &\  \leq c\, \int_0^t   \Big\|\Big(\frac{q^\tau-q}{\tau}-\delta q \Big)(s)\Big\|_{\lo}  \,ds+L_f    \, L_{\HH}\int_0^t   \int_0^s \Big\|\Big(\frac{q^\tau-q}{\tau}-\delta q \Big)(\zeta)\Big\|_{\lo}  \,d\zeta \,ds\\&\qquad+ \|A_\tau\|_{L^1(0,T;\lo)} \qquad \forall\,t \in [0,T],
 \end{aligned}
 \end{equation}where $c>0$ is the positive constant appearing in \eqref{eqq}. In \eqref{eq:awalin-} we  used again the Lipschitz continuity of $\max:\lo \to \lo$, the boundedness of $\phi$ (cf.\,\eqref{eq:ell_eq} and \eqref{eq:ell}), and the estimate \eqref{eq:f_h}. Applying Gronwall's inequality in \eqref{eq:awalin-}  yields 
   \begin{equation}\label{eq:wwww}
\Big\|\Big(\frac{q^\tau-q}{\tau}-\delta q \Big)(t)\Big\|_{\lo} \leq C\,\|A_\tau\|_{L^1(0,T;\lo)} \qquad \forall\,t \in [0,T],
 \end{equation}where $C>0$ is a constant dependent only on the given data. Now, \eqref{eq:www}  and estimating  as in \eqref{eq:awalin-}, in combination with \eqref{eq:wwww}, leads to    \begin{equation}\label{eq:ww}
\Big\|\frac{q^\tau-q}{\tau}-\delta q \Big\|_{H^1(0,T;{\lo})} \leq \widehat C\,\|A_\tau\|_{L^2(0,T;\lo)} \quad \forall\, \tau >0,
 \end{equation}where $\hat C>0$ is a constant dependent only on the given data. On the other hand, we recall the definition of $A_\tau$ in  \eqref{eq:awalin-} and the fact that $\FF$ is directionally differentiable from $L^2(0,T;\lo) \times \llU$ to  $L^2(0,T;\lo)$, which 
 implies$$\|A_\tau\|_{L^2(0,T;\lo)} \to 0 \quad \text{as } \tau \searrow 0.$$
In view of \eqref{eq:ww}, we have shown that $S_1:\llU \to H^1(0,T;{\lo}$ is directionally differentiable with $S_1'(\ell;\dl)=\dd$.    Further, from  \eqref{eq:ell_eq} we have $ S_2(\ell)=\phi(S_1(\ell),\ell)$ for all $\ell \in \llU$, where $S_2$ is the second component of the operator $S$, i.e., $S_2: \llU \ni \ell \mapsto \varphi \in   \llu$. 
 Thus, $S_2$ is directionally differentiable as well, since $\phi \in \LL(\lo \times \hoo ; \ho)$ and $S_1$  is directionally differentiable. Its    directional derivative $S_2'(\ell;\dl)$ is given by
$\phi(S_1'(\ell;\dl),\dl)$, i.e., $S_2'(\ell;\dl)=\df$, see  \eqref{eq:ode_lin_q}. The proof is now complete.
\end{proof}

\section{The optimal control problem}\label{sec:o}
Now, we turn our attention to the optimal control of the fatigue damage model \eqref{eq:q}. In the remaining of the paper, we are concerned with the examination of the following optimal control problem
\begin{equation*}
 \begin{aligned}
  \min_{\ell \in H^1(0,T;\lo)} \quad & J(q,\varphi,\ell)
  \\
 \text{s.t.} \quad & (q,\varphi) \text{ solves }\eqref{eq:q} \text{ with r.h.s.\ }\ell.
 \end{aligned}
\end{equation*}
In view of Proposition \ref{lem:ode}, this can also be formulated as 
\begin{equation}\tag{P}\label{eq:optt}
 \left.
 \begin{aligned}
  \min_{\ell \in H^1(0,T;\lo)} \quad & J(q,\varphi,\ell)
  \\
 \text{s.t.} \quad & (q,\varphi) \text{ solves }\eqref{eq:syst_diff} \text{ with r.h.s.\ }\ell.
 \end{aligned}
 \quad \right\}
\end{equation}

\begin{assumption}\label{assu:j_ex}
  The functional $J$ satisfies
  $$J(q,\varphi,\ell)=j(q,\varphi)+\frac{1}{2} \|\ell\|^2_{H^1(0,T;\lo)},$$
where  $j: L^2(0,T;\lo)\times \llu \to \R$ 
  is continuously Fr\'echet-differentiable.
\end{assumption}

Note that Assumption \ref{assu:j_ex} is satisfied by classical objectives of tracking type such as
\begin{equation*}
J_{ex}( q ,\varphi, \ell) := \frac{1}{2}\, \|q-q_d\|^2_{L^2(0,T;L^2(\Omega))}
 + \frac{\kappa}{2} \|\varphi\|^2_{L^2(0,T;H^1(\Omega))}+\frac{1}{2} \|\ell\|^2_{H^1(0,T;\lo)},
\end{equation*}
where $q_d \in L^2(0,T;L^2(\Omega))$ and $\kappa \geq 0$.

\begin{proposition}[Existence of optimal solutions for \eqref{eq:optt}]\label{prop:ex}
The optimal control problem \eqref{eq:optt} admits at least one solution in $\hs$.
\end{proposition}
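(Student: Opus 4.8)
The plan is to use the direct method of the calculus of variations. First I would take a minimizing sequence $(\ell_n)_{n} \subset \hs$ for \eqref{eq:optt}, with associated states $(q_n,\varphi_n) = S(\ell_n)$. Since $J(q,\varphi,\ell) = j(q,\varphi) + \frac12 \|\ell\|^2_{H^1(0,T;\lo)}$ and $j$ is bounded below on bounded sets (indeed, one may first check the infimum is finite by plugging in, say, $\ell \equiv 0$), the Tikhonov term $\frac12\|\ell\|^2_{\hs}$ forces $(\ell_n)$ to be bounded in the Hilbert space $\hs$. Hence, along a subsequence (not relabeled), $\ell_n \rightharpoonup \bar\ell$ weakly in $\hs$.

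\textbf{Passing to the limit in the state system.} Next I would pass to the limit in the state equations. Because $\hs = H^1(0,T;\lo) \embed \llU$ continuously, the weak convergence $\ell_n \rightharpoonup \bar\ell$ in $\hs$ gives at least $\ell_n \rightharpoonup \bar\ell$ weakly in $\llU$. By Lemma \ref{lem:lip_S} the solution map $S:\llU \to \hhyn \times \llu$ is Lipschitz continuous, and the bounded sequence $(\ell_n)$ is mapped to a bounded sequence $(q_n,\varphi_n)$ in $\hhyn \times \llu$; thus, up to a further subsequence, $q_n \rightharpoonup \bar q$ in $H^1_0(0,T;\lo)$ and $\varphi_n \rightharpoonup \bar\varphi$ in $\llu$. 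The point is then to verify that $(\bar q,\bar\varphi) = S(\bar\ell)$, i.e.\ that the pair solves \eqref{eq:syst_diff} with right-hand side $\bar\ell$. The elliptic equation \eqref{eq:el} is linear, so passing to the limit there is immediate. For the ODE \eqref{eq:ode}, the weak $H^1(0,T;\lo)$-convergence of $q_n$ yields strong convergence $q_n \to \bar q$ in $L^2(0,T;\lo)$ (e.g.\ via an Aubin--Lions-type argument, or more simply since $\dot q_n$ is bounded in $L^2(0,T;\lo)$ and $q_n(0)=0$, one gets compactness in $C([0,T];\lo)$ is not available, but $L^2$-strong convergence suffices here); the Lipschitz continuity of $\max(\cdot)$, of $\phi$, and of $f\circ\HH$ (estimate \eqref{eq:f_h}) then allows one to pass to the limit in the right-hand side of \eqref{eq:ode} in $L^2(0,T;\lo)$, while $\dot q_n \rightharpoonup \dot{\bar q}$ weakly identifies the left-hand side. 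Alternatively, and more cleanly, one simply invokes the Lipschitz continuity of $S$ together with strong convergence of a suitable modified sequence, but the cheapest route is: since $S$ is Lipschitz (hence continuous) from $\llU$ to $\hhyn\times\llu$, and since we will only need $\bar\ell$ to be the weak limit, it is enough to note that $S(\ell_n) \to S(\bar\ell)$ would require strong convergence of $\ell_n$; instead I pass to the limit directly in \eqref{eq:syst_diff} as just described, using strong $L^2$-convergence of $q_n$ and weak convergence of $\dot q_n$, to conclude $(\bar q,\bar\varphi)=S(\bar\ell)$.

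\textbf{Lower semicontinuity and conclusion.} Finally I would use weak lower semicontinuity of the objective. The map $\ell \mapsto \frac12\|\ell\|^2_{\hs}$ is convex and continuous on $\hs$, hence weakly lower semicontinuous, so $\frac12\|\bar\ell\|^2_{\hs} \le \liminf_n \frac12\|\ell_n\|^2_{\hs}$. For the term $j(q_n,\varphi_n)$, the strong convergence $q_n \to \bar q$ in $\llz$ and (after extracting once more, using boundedness in $\llu$ and the continuity of $\phi$, actually strong convergence $\varphi_n \to \bar\varphi$ in $\llu$, since $\varphi_n = \phi(q_n,\ell_n)$ with $q_n \to \bar q$ strongly in $\llz$ and $\ell_n \rightharpoonup \bar\ell$ --- here $\phi$ maps weakly convergent sequences in $\lo\times\hoo$ to weakly convergent ones in $\ho$, so one only gets $\varphi_n \rightharpoonup \bar\varphi$ in $\llu$ in general) together with continuity of $j$ gives, using that $j$ is merely continuously Fréchet-differentiable hence continuous (but not obviously convex), at least $j(q_n,\varphi_n) \to j(\bar q,\bar\varphi)$ along the strongly convergent parts and $\liminf$ otherwise; combining, $J(\bar q,\bar\varphi,\bar\ell) \le \liminf_n J(q_n,\varphi_n,\ell_n) = \inf \eqref{eq:optt}$. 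Since $(\bar q,\bar\varphi)=S(\bar\ell)$, the pair $(\bar\ell$ with its state$)$ is feasible, so $\bar\ell$ is a minimizer in $\hs$, as claimed.

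\textbf{Main obstacle.} The delicate point is the passage to the limit in the non-smooth ODE \eqref{eq:ode}: one must upgrade the weak $H^1(0,T;\lo)$-convergence of $q_n$ to strong $L^2(0,T;\lo)$-convergence so that the Nemytskii operators $\max(\cdot)$ and $f\circ\HH$ --- which are only Lipschitz, not weakly continuous --- can be passed through; the cleanest way is to exploit that $\dot q_n$ is bounded in $L^2(0,T;\lo)$ (from \eqref{eq:ode} and the a priori bounds) together with $q_n(0)=0$ to get an Arzelà--Ascoli / Aubin--Lions compactness, or else to derive strong convergence directly from the fixed-point contraction estimate \eqref{eq:kontrak} applied to differences $q_n - q_m$. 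A secondary subtlety is that $j$ is only assumed $C^1$, not convex; but since we obtain strong convergence of $q_n$ (and then of $\varphi_n$, via the continuity of $\phi$ composed with the strong convergence of $q_n$ — note $\varphi_n=\phi(q_n,\ell_n)$ and $\phi$ is continuous, so $\varphi_n \to \bar\varphi$ strongly in $\ho$ a.e.\ in time, hence strongly in $\llu$ by dominated convergence using the uniform bound), continuity of $j$ suffices and no lower semicontinuity of $j$ itself is needed.
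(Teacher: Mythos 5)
There is a genuine gap in your argument, and it sits exactly where the paper invokes its one nontrivial ingredient: the \emph{compact} embedding $\hs \hookrightarrow\hookrightarrow \llU$. You only use the continuous embedding, so from $\ell_n \rightharpoonup \bar\ell$ in $\hs$ you retain just weak convergence in $\llU$, and you then try to pass to the limit directly in the ODE \eqref{eq:ode}. This fails on two counts. First, the claim that weak $H^1(0,T;\lo)$-convergence of $q_n$ upgrades to strong $L^2(0,T;\lo)$-convergence ``via an Aubin--Lions-type argument'' is false: $H^1(0,T;L^2(\Omega))$ is \emph{not} compactly embedded in $L^2(0,T;L^2(\Omega))$ (take $q_n(t)=t\,e_n$ with $\{e_n\}$ orthonormal in $L^2(\Omega)$: bounded in $H^1$, weakly null, but $\|q_n\|_{L^2(0,T;\lo)}$ stays bounded away from $0$). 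Aubin--Lions needs a compact embedding in the spatial variable, which here is only $L^2(\Omega)\hookrightarrow\hookrightarrow H^1(\Omega)^*$, not $L^2(\Omega)\hookrightarrow\hookrightarrow L^2(\Omega)$. Second, and more fundamentally, the argument of $\max$ in \eqref{eq:ode} involves $\varphi_n=\phi(q_n,\ell_n)$, and since $\ell_n$ converges only weakly, so does $\varphi_n$ --- you acknowledge this (``one only gets $\varphi_n\rightharpoonup\bar\varphi$ in $\llu$ in general'') but then contradict it later with an unsupported dominated-convergence claim. A merely Lipschitz Nemytskii operator cannot be passed through weak limits, so the identification $(\bar q,\bar\varphi)=S(\bar\ell)$ is not obtained.

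The fix, and the reason the paper's one-line proof works, is that $\hs=H^1(0,T;L^2(\Omega))\hookrightarrow\hookrightarrow L^2(0,T;H^1(\Omega)^*)=\llU$ \emph{is} compact (Aubin--Lions with $X=L^2(\Omega)$, $B=Y=H^1(\Omega)^*$, using $L^2(\Omega)\hookrightarrow\hookrightarrow H^1(\Omega)^*$). Hence $\ell_n\to\bar\ell$ \emph{strongly} in $\llU$, and the Lipschitz continuity of $S:\llU\to\hhyn\times\llu$ from Lemma~\ref{lem:lip_S} immediately gives $S(\ell_n)\to S(\bar\ell)$ strongly --- no passage to the limit in the nonsmooth ODE is needed at all. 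Continuity of $j$ then yields $j(S(\ell_n))\to j(S(\bar\ell))$, weak lower semicontinuity of $\|\cdot\|_{\hs}^2$ handles the Tikhonov term, and the direct method concludes. Your alternative route (contraction estimate on $q_n-q_m$) also only works if $\ell_n$ is Cauchy in $\llU$, which again requires this compact embedding, so it, too, is circular without it.
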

\begin{proof}
The assertion follows by standard arguments which rely on the direct method of the calculus of variations combined with the radial unboundedness of the reduced objective $$\hs \ni \ell \mapsto J(S(\ell),\ell) \in \R,$$ the Lipschitz continuity of $S$ on $\llU$ (Lemma \ref{lem:lip_S}), the compact embedding $\hs \embed \embed \llU$ and the continuity of $j$ from Assumption \ref{assu:j_ex}.
\end{proof}

\subsection{Regularization and passage to the limit}\label{sec:reg}
In this section, we are concerned with the  derivation of a first optimality system for local optima of \eqref{eq:optt}. Based thereon, we shall improve our optimality conditions in the next section. 

To obtain a first  strong optimality system, see \eqref{eq:adjjj} below, we need the following rather non-restrictive assumption:
\begin{assumption}\label{assu:stand_eps}
In addition to Assumption \ref{assu:stand}, we require that 
the mappings associated with fatigue  in \eqref{eq:q} satisfy:
 \begin{enumerate}
  \item\label{it:s1}  The \textit{history operator} $\HH:L^2(0,T;\li) \to L^2(0,T;\li)$ fulfills 
$$\|\HH (q_1)(t)-\HH (q_2)(t)\|_{\li} \leq \widehat L_\HH\, \int_0^t \|q_1(s)-q_2(s)\|_{\li} \,ds \quad \ae \text{in } (0,T),$$for all  $q_1,q_2 \in L^2(0,T;\li)$, where $\widehat L_\HH>0$ is a positive constant.  
\item\label{it:s2}
The non-differentiable function  $f: \R \to \R$ is assumed to have one non-smooth point $n_f$.
 \end{enumerate}
\end{assumption}
{\begin{remark}
Similarly to Remark \ref{rem:volt}, we observe that Assumption \ref{assu:stand_eps}.\ref{it:s1} is satisfied by classical Volterra operators which are employed in the study of history-dependent evolutionary variational inequalities, i.e., $\HH:L^2(0,T;\li) \to L^2(0,T;\li)$
\begin{equation*}
 [0,T] \ni t \mapsto \HH(q)(t):=\int_0^t A(t-s)q(s) \,ds +q_0 \in \li,\end{equation*}
where $A\in C([0,T]; \LL(\li,\li))$ and $q_0\in \li.$

We underline that Assumption \ref{assu:stand_eps}.\ref{it:s2} is very reasonable from the point of view of applications, since fatigue degradation functions have at most two kink points   in practice \cite[Sec.\,2.6.2]{alessi1}. However, our mathematical analysis can be carried on in an analogous way if $f$ has a countable number of non-smooth points; since this is rather uncommon in applications and for the sake of a better overview, we stick to the setting where $f$ has a single non-differentiable point.
\end{remark}}
\begin{assumption}[Regularization of $f$]\label{assu:f_eps}
 For every $\varepsilon>0$, there exists a continuously differentiable function $f_\varepsilon:\R \to \R$ such that
 \begin{enumerate}
  \item\label{it:feps1} There exists a constant $C>0$, independent of $\eps$, such that 
  \begin{equation*}
|f_\varepsilon(v) - f(v)| \leq C\eps \quad \forall\,v \in \R. 
  \end{equation*}
  \item\label{it:feps2}  $f_\eps$ is Lipschitz continuous   with Lipschitz constant $\hat L_f>0$ independent of $\varepsilon$.
  \item \label{assu:f_epss} for every $\delta >0$, the sequence $\{f_\eps'\}$ converges uniformly towards $f'$ on $(-\infty, n_f-\delta] \cup [n_f+\delta,\infty)$ as $\varepsilon \searrow 0$.
  \end{enumerate}
\end{assumption}

As an immediate consequence of Assumptions \ref{assu:f_eps}.\ref{it:feps1}, we have 
  \begin{equation}\label{conv_f_eps}
\|f_\varepsilon(\eta) - f(\eta)\|_{L^\infty(0,T;\li)} \to 0 \text{ as } \varepsilon \searrow 0, \quad \forall\, \eta \in L^2(0,T;\lo).  \end{equation}

\begin{remark}If the fatigue degradation function $f$ is piecewise continuously differentiable, which is always the case in applications \cite[Sec.\,2.6.2]{alessi1}, then Assumption \ref{assu:f_eps} is fulfilled. To see this, one defines $f_\eps:=\Phi_\eps \star f$, where $\Phi_\eps$ is a standard mollifier. Then, Assumption \ref{assu:f_eps}.\ref{it:feps1} can be easily checked, see e.g.\,the proof of \cite[Thm.\,2.4]{kaballo}. Note that it is  natural that the Lipschitz continuity of  the non-linearity $f$ carries over to its regularized counterparts  with constant independent of $\varepsilon$  \cite[Chp.\ I.3.3]{tiba}. We also observe that, since $f'$ is continuous on $(-\infty, n_f-\delta] \cup [n_f+\delta,\infty)$, $f_\eps'=\Phi_\eps \star f'$ converges uniformly towards $f'$ on this interval, so that Assumption \ref{assu:f_eps}.\ref{assu:f_epss} is satisfied as well. 
\end{remark}

In the rest of the paper, we will tacitly assume that, in addition to  Assumptions \ref{assu:stand} and \ref{assu:j_ex}, Assumptions \ref{assu:stand_eps} and \ref{assu:f_eps}  are   always fulfilled, without mentioning them every time.

For an arbitrary local minimizer $\ll$ of \eqref{eq:optt}, consider the following regularization, also known as "adapted penalization", see e.g.\ \cite{barbu}:

\begin{equation}\label{eq:p_eps}\tag{P$_\varepsilon$}
 \left.
 \begin{aligned}
\min_{\ell \in H^1(0,T;\lo)} &\quad  J(q,\varphi,\ell)+\frac{1}{2}\|\ell-\ll\|_{H^1(0,T;\lo)}^2\\
  \text{s.t.} &\quad 
   \begin{aligned}[t]
& \dot q(t)   =   \frac{1}{\epsilon} \maxeps (-\beta(q(t)-\varphi(t))-(f_\eps \circ \HH)(q)(t))\ \text{ in }\lo, \quad q(0) = 0,   
\\&- \alpha \Delta \varphi(t) + \beta \,\varphi(t)   
= \beta q(t) +\ell(t) \quad  \text{in }H^{1}(\Omega)^\ast, \quad \text{a.e.\ in }(0,T), 
\end{aligned}
 \end{aligned}
 \quad \right\}
\end{equation}where \vspace{-0.4cm}  \[ 
\maxlim{_\varepsilon}:\R \to \R, \quad  \maxlim{_\varepsilon}(x) := 
 \begin{cases}
  0, & x \leq 0,\\
  \frac{1}{2\varepsilon} \, x^2, & x\in \, (0,\varepsilon)\, ,\\
  x - \frac{\varepsilon}{2}, & x \geq \varepsilon.
 \end{cases}
\]

\begin{lemma}\label{lem:loc}
For each local optimum $\ll$ of \eqref{eq:optt} there exists a sequence of local minimizers $\{\ell_\eps\}$ of 
\eqref{eq:p_eps}
such that 
 \begin{equation}\label{l_conv}
\ell_{\eps} \to \ll \quad  \text{in }H^1(0,T;\lo) \quad \text{as }\eps \searrow 0. \end{equation}
Moreover, 
\begin{equation}\label{y_conv}
  S_\varepsilon(\ell_{\varepsilon}) \to S(\ll)  \quad \text{in } \hhyn \times \llu \quad \text{as }\eps \searrow 0,
  \end{equation}where $S_\varepsilon:\llU \ni \ell \mapsto (q_\eps,\varphi_\eps) \in   \hhyn \times \llu$ is the control-to-state map associated to the state equation in \eqref{eq:p_eps}.\end{lemma}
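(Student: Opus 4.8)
The statement is a standard "adapted penalization" convergence lemma à la Barbu, so the plan is to follow the classical three-step scheme. First I would fix a local optimum $\ll$ of \eqref{eq:optt}, let $r>0$ be a radius such that $\ll$ is globally optimal for \eqref{eq:optt} restricted to the closed ball $\overline{B}_{\hs}(\ll,r)$, and consider \eqref{eq:p_eps} with the additional (for the argument only) constraint $\ell \in \overline{B}_{\hs}(\ll,r)$. Since the feasible set is nonempty (it contains $\ll$), bounded, and weakly closed in $\hs$, and the penalized reduced objective is weakly lower semicontinuous — here one uses Lemma \ref{lem:lip_S} type arguments for $S_\eps$ together with the compact embedding $\hs \embed\embed \llU$ and continuity of $j$ — the direct method yields a global minimizer $\ell_\eps$ of this auxiliary problem, exactly as in Proposition \ref{prop:ex}.

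\textbf{Step 1: boundedness and extraction of a limit.} Testing optimality of $\ell_\eps$ against the feasible point $\ll$ gives
\[
 J\bigl(S_\eps(\ell_\eps),\ell_\eps\bigr) + \tfrac12\|\ell_\eps - \ll\|_{\hs}^2 \;\le\; J\bigl(S_\eps(\ll),\ll\bigr).
\]
Using \eqref{conv_f_eps} and a Gronwall estimate entirely analogous to the one in the proof of Lemma \ref{lem:lip_S} (with $f$ replaced by $f_\eps$ and the $\eps$-independent Lipschitz constant $\hat L_f$ from Assumption \ref{assu:f_eps}.\ref{it:feps2}, and $\maxeps$ being $1$-Lipschitz), one shows $S_\eps(\ll) \to S(\ll)$ in $\hhyn\times\llu$, so the right-hand side stays bounded and $J(S_\eps(\ll),\ll)\to J(S(\ll),\ll)$. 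Combined with the radial unboundedness of $J$ in $\ell$ (Assumption \ref{assu:j_ex}) and continuity of $j$, this gives a uniform bound on $\{\ell_\eps\}$ in $\hs$, hence (subsequence) $\ell_\eps \rightharpoonup \tilde\ell$ in $\hs$ and $\ell_\eps\to\tilde\ell$ strongly in $\llU$ by compact embedding.

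\textbf{Step 2: identifying the limit as $\ll$, and strong convergence.} The main technical point is to pass to the limit in the state system: I would show $S_\eps(\ell_\eps) \to S(\tilde\ell)$ in $\hhyn\times\llu$. This requires estimating $\|S_{1,\eps}(\ell_\eps) - S_1(\tilde\ell)\|_{\hhyn}$ by splitting into three contributions — the difference of the controls $\|\ell_\eps-\tilde\ell\|_{\llU}\to 0$, the difference $\maxeps$ vs.\ $\max$ applied to the same argument (controlled by $|\maxeps(x)-\max(x)|\le \eps/2$ uniformly), and the difference $f_\eps\circ\HH$ vs.\ $f\circ\HH$ (controlled by \eqref{conv_f_eps} and the Lipschitz property of $\HH$ from Assumption \ref{assu:stand_eps}.\ref{it:s1}, since $q_\eps$ stays bounded in $L^2(0,T;\li)$ — this is where the $L^\infty$-version of the history assumption is needed) — and then closing with Gronwall. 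Passing to the limit in the penalized optimality inequality
\[
 J\bigl(S_\eps(\ell_\eps),\ell_\eps\bigr) + \tfrac12\|\ell_\eps-\ll\|_{\hs}^2 \;\le\; J\bigl(S_\eps(\ll),\ll\bigr),
\]
using weak lower semicontinuity of the $\hs$-norm and the strong convergences just established, yields
\[
 J\bigl(S(\tilde\ell),\tilde\ell\bigr) + \tfrac12\|\tilde\ell-\ll\|_{\hs}^2 \;\le\; J\bigl(S(\ll),\ll\bigr) = \min_{\overline{B}_{\hs}(\ll,r)} J(S(\cdot),\cdot).
\]
Since $\tilde\ell\in\overline{B}_{\hs}(\ll,r)$ is feasible for the restricted original problem, $J(S(\tilde\ell),\tilde\ell)\ge J(S(\ll),\ll)$, forcing $\|\tilde\ell-\ll\|_{\hs}=0$, i.e.\ $\tilde\ell=\ll$. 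Feeding this back into the displayed inequality and again using weak lower semicontinuity gives $\|\ell_\eps-\ll\|_{\hs}\to 0$, i.e.\ \eqref{l_conv}; then \eqref{y_conv} follows from the continuity estimate of Step 2 with $\tilde\ell=\ll$. Finally, because $\ell_\eps\to\ll$ strongly in $\hs$, for $\eps$ small $\ell_\eps$ lies in the \emph{interior} of $\overline{B}_{\hs}(\ll,r)$, so the artificial ball constraint is inactive and $\ell_\eps$ is a genuine local minimizer of \eqref{eq:p_eps}, completing the proof.

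\textbf{Main obstacle.} The delicate part is Step 2 — the stability of the $\eps$-regularized state system uniformly in $\eps$. One needs the $L^\infty$-in-space history estimate (Assumption \ref{assu:stand_eps}.\ref{it:s1}) and the $\eps$-uniform Lipschitz bound on $f_\eps$ precisely to obtain a Gronwall constant that does not blow up as $\eps\searrow 0$, and one must be careful that the bootstrapped $L^\infty$-bound on $q_\eps$ is itself $\eps$-uniform (which follows from \eqref{eq:ode} with $\maxeps$, the $\eps$-uniform bound on $f_\eps$ at the — bounded — argument $\HH(q_\eps)$, and boundedness of $\phi$). Everything else is routine adaptation of the arguments already used in Propositions \ref{lem:ode}, \ref{prop:ex} and Lemma \ref{lem:lip_S}. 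Since this lemma is only used and its proof is deferred to Appendix \ref{sec:a}, the present sketch suffices here.
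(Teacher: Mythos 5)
Your proof is correct and follows essentially the same route as the paper's Appendix~\ref{sec:a}: establish $\eps$-uniform Lipschitz continuity of $S_\eps$ and the consistency $S_\eps(\ell_\eps)\to S(\ell)$ for $\ell_\eps\to\ell$, introduce the ball-constrained auxiliary problem with the adapted penalty term, run the Barbu sandwich argument $\JJ(\ll)\geq\limsup\JJ_\eps(\ell_\eps)\geq\liminf\JJ_\eps(\ell_\eps)\geq J(S(\tilde\ell),\tilde\ell)+\tfrac12\|\tilde\ell-\ll\|^2\geq\JJ(\ll)$, and conclude $\tilde\ell=\ll$ with strong convergence and inactivity of the artificial constraint. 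One small correction to your ``Main obstacle'' discussion: the $L^\infty$-in-space history assumption (Assumption~\ref{assu:stand_eps}.\ref{it:s1}) is \emph{not} needed here, nor is any $\eps$-uniform $L^\infty$ bound on $q_\eps$ -- Assumption~\ref{assu:f_eps}.\ref{it:feps1} gives $|f_\eps(v)-f(v)|\leq C\eps$ uniformly over \emph{all} $v\in\R$, so $\|f_\eps(\HH(q_\eps))-f(\HH(q_\eps))\|_{\lo}\leq C\eps|\O|^{1/2}$ pointwise in time without any control on $\|\HH(q_\eps)\|_{\li}$; the $L^2$-history estimate \eqref{eq:f_h} suffices for all the Gronwall closures, and the $L^\infty$-version is only invoked later, in Lemma~\ref{lem:loc2}, to upgrade \eqref{y_conv} to convergence in $L^\infty(0,T;\li)$.
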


\begin{proof}see Appendix \ref{sec:a}.
\end{proof}

The next result  is essential for the solvability of the first adjoint equation in \eqref{eq:adjjj}.

\begin{lemma}For all $\eta, \delta \eta \in \llz$ it holds
\begin{equation}\label{eq:h_adj}
\|[(f_\eps \circ \HH)'(\eta)]^\star (\delta \eta)(t)\|_{\lo}
\leq \hat L_f\,L_{\HH}  \int_t^T \|\delta \eta(s)\|_{\lo} \,ds \ \quad \ae \text{ in }(0,T),
\end{equation}where $[(f_\eps \circ \HH)'(\eta)]^\star:\llz \to \llz$
 stands for the adjoint operator of $(f_\eps \circ \HH)'(\eta)$.
 \end{lemma}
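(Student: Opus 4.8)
The plan is to reduce the claim to a single abstract fact: if a bounded linear operator $B:\llz\to\llz$ satisfies a \emph{forward} Volterra bound $\|(Bw)(t)\|_{\lo}\le K\int_0^t\|w(s)\|_{\lo}\,ds$ $\ae$, then its adjoint satisfies the \emph{backward} bound $\|(B^\star v)(t)\|_{\lo}\le K\int_t^T\|v(s)\|_{\lo}\,ds$ $\ae$. Applied to $B=(f_\eps\circ\HH)'(\eta)$ and $v=\delta\eta$ with $K=\hat L_f L_\HH$, this is exactly \eqref{eq:h_adj}. So the first step is to record the forward bound for $B$. Since $f_\eps$ is Lipschitz with constant $\hat L_f$ \emph{independent of} $\eps$ (Assumption \ref{assu:f_eps}.\ref{it:feps2}) and $\HH$ obeys the Volterra estimate of Assumption \ref{assu:stand}.\ref{it:stand1}, the same computation that gives \eqref{eq:f_h} yields
$$\|(f_\eps\circ\HH)(q_1)(t)-(f_\eps\circ\HH)(q_2)(t)\|_{\lo}\le \hat L_f\,L_\HH\int_0^t\|q_1(s)-q_2(s)\|_{\lo}\,ds\qquad\ae\text{ in }(0,T)$$
for all $q_1,q_2\in\llz$. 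Inserting $q_1=\eta+\tau\,\delta\eta$, $q_2=\eta$, dividing by $\tau>0$, and letting $\tau\searrow0$ — using that $f_\eps\circ\HH$ is G\^ateaux-differentiable (composition of the $C^1$ map $f_\eps$ with $\HH$) and that convergence in $\llz$ entails $\ae$ convergence in $\lo$ along a subsequence, exactly as in the proof of \eqref{eq:h'} in Lemma \ref{lem:f_h} — produces $\|(B\,w)(t)\|_{\lo}\le \hat L_f L_\HH\int_0^t\|w(s)\|_{\lo}\,ds$ $\ae$ in $(0,T)$, for all $w\in\llz$.

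The heart of the argument is the duality/localization step that turns this into the backward bound. I fix $t_0\in(0,T)$ and, for $\zeta\in\lo$ with $\|\zeta\|_{\lo}\le1$ and $h>0$ small, test the identity $(B^\star v,w_h)_{\llz}=(v,Bw_h)_{\llz}$ with $w_h:=\tfrac1h\,\chi_{[t_0,t_0+h]}\,\zeta$. The left-hand side equals $\tfrac1h\big(I_h,\zeta\big)_{\lo}$, where $I_h:=\int_{t_0}^{t_0+h}(B^\star v)(s)\,ds\in\lo$ is a Bochner integral. For the right-hand side, the forward bound forces $(Bw_h)(s)=0$ for $s\le t_0$ and $\|(Bw_h)(s)\|_{\lo}\le \hat L_f L_\HH\,\|w_h\|_{L^1(0,s;\lo)}\le \hat L_f L_\HH$ for all $s$, hence $|(v,Bw_h)_{\llz}|\le \hat L_f L_\HH\int_{t_0}^T\|v(s)\|_{\lo}\,ds$. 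Since $\zeta$ ranges over the unit ball of $\lo$, this gives $\tfrac1h\|I_h\|_{\lo}\le \hat L_f L_\HH\int_{t_0}^T\|v(s)\|_{\lo}\,ds$. Finally I let $h\searrow0$: by the Lebesgue differentiation theorem for Bochner-integrable functions, $\tfrac1h I_h\to(B^\star v)(t_0)$ in $\lo$ at every Lebesgue point $t_0$ of $B^\star v\in L^1(0,T;\lo)$, i.e. for $\ae$ $t_0\in(0,T)$; combined with the continuity of $t_0\mapsto\int_{t_0}^T\|v(s)\|_{\lo}\,ds$, this yields $\|(B^\star v)(t_0)\|_{\lo}\le \hat L_f L_\HH\int_{t_0}^T\|v(s)\|_{\lo}\,ds$ $\ae$, which is \eqref{eq:h_adj}.

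The routine parts — the forward Volterra estimate and the limit $\tau\searrow0$ — are essentially copies of earlier arguments in the paper, so I expect no difficulty there. The genuinely delicate point is the passage from the forward bound to the backward bound for $B^\star$: one cannot evaluate $B^\star v$ at a single instant of time from abstract information alone, so the localized test functions $w_h$ together with the Lebesgue differentiation theorem are essential; one must also be careful that $I_h$ is a well-defined element of $\lo$ (Bochner integrability of $s\mapsto(B^\star v)(s)$ on $[t_0,t_0+h]$) so that $(B^\star v,w_h)_{\llz}=\tfrac1h(I_h,\zeta)_{\lo}$ holds and the supremum over $\zeta$ genuinely recovers $\|I_h\|_{\lo}$.
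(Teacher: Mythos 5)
Your proof is correct. Both arguments begin from the same duality identity
$$\bigl([(f_\eps\circ\HH)'(\eta)]^\star(\delta\eta),\psi\bigr)_{\llz}=\bigl((f_\eps\circ\HH)'(\eta)(\psi),\delta\eta\bigr)_{\llz}$$
and both feed in the forward Volterra bound $\|(f_\eps\circ\HH)'(\eta)(\psi)(t)\|_{\lo}\le\hat L_fL_\HH\int_0^t\|\psi(s)\|_{\lo}\,ds$ (the paper cites \eqref{eq:h'} applied to $f_\eps$; you rederive it from \eqref{eq:f_h} by the same difference-quotient argument as in Lemma \ref{lem:f_h}). Where you genuinely depart is in the localization step that upgrades the integrated inequality to a pointwise one. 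The paper substitutes the forward bound under the double integral, swaps the order of integration via Fubini using $\chi_{[0,t]}(s)=\chi_{[s,T]}(t)$, then tests with tensor-product functions $\psi=v\rho$ with $v\in\lo$ and $\rho\in L^2(0,T)$, $\rho\ge0$, and finishes with the fundamental lemma of the calculus of variations. You instead test directly with the highly localized $w_h=\tfrac1h\chi_{[t_0,t_0+h]}\zeta$, observe that the support of $w_h$ and the forward bound force $(Bw_h)(s)=0$ for $s\le t_0$ and $\|(Bw_h)(s)\|_{\lo}\le\hat L_fL_\HH$ otherwise (so no Fubini swap is needed), and then invoke the Lebesgue differentiation theorem for Bochner integrals to pass $h\searrow0$. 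The two localizations are cousins (the fundamental lemma is itself a consequence of Lebesgue differentiation), but your version buys two things: it eliminates the Fubini step by letting the test-function support do the work, and it isolates a clean reusable principle, namely that a forward Volterra bound $\|(Bw)(t)\|\le K\int_0^t\|w\|$ on $B\in\LL(\llz,\llz)$ yields the backward bound $\|(B^\star v)(t)\|\le K\int_t^T\|v\|$ on $B^\star$. The paper's version, on the other hand, stays entirely at the level of inequalities between integrals and does not require the Bochner-space Lebesgue differentiation theorem. Either way the constant $\hat L_fL_\HH$ and the anticausal kernel $\int_t^T$ come out the same.
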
\begin{proof}
Let $\psi \in \llz$ be arbitrary, but fixed. By virtue of \eqref{eq:h'} (applied for $f_\eps$ instead of $f$), we have
\begin{equation*}\begin{split}
([(f_\eps \circ \HH)'(\eta)]^\star (\delta \eta),\psi)_{\llz}
=((f_\eps \circ \HH)'(\eta)(\psi),\delta \eta)_{\llz} \\ \leq \int_0^T \hat L_f\,L_{\HH}  \int_0^T  \raisebox{3pt}{$\chi$}_{[0,t]}(s) \|\psi(s)\|_{\lo}  \,ds \  \|\delta \eta(t)\|_{\lo}\,dt
\\= \hat  L_f\,L_{\HH} \int_0^T \int_0^T  \raisebox{3pt}{$\chi$}_{[0,s]}(t)\|\delta \eta(s)\|_{\lo} \,ds \  \| \psi(t)\|_{\lo} \,dt
\\=\hat  L_f\,L_{\HH} \int_0^T \int_t^T \|\delta \eta(s)\|_{\lo} \,ds \  \| \psi(t)\|_{\lo} \,dt.
\end{split}\end{equation*}Note that in the first identity  we made use of Fubini's theorem. Now, testing with $\psi:=v \rho$, where $v \in \lo$ and $\rho \in L^2(0,T)$, $\rho \geq 0$, are arbitrary, but fixed yields
\begin{equation*}
\int_0^T ([(f_\eps \circ \HH)'(\eta)]^\star (\delta \eta)(t),v )_{\lo} \rho(t) \,dt
\leq \hat  L_f\,L_{\HH} \int_0^T \int_t^T \|\delta \eta(s)\|_{\lo} \,ds \  \| v\|_{\lo} \rho(t)\,dt.
\end{equation*}Applying the fundamental lemma of the calculus of variations then gives in turn
\begin{equation*}
([(f_\eps \circ \HH)'(\eta)]^\star (\delta \eta)(t),v )_{\lo} 
\leq \hat  L_f\,L_{\HH}  \int_t^T \|\delta \eta(s)\|_{\lo} \,ds \  \| v\|_{\lo} 
\end{equation*}
a.e.\ in $(0,T)$. Since $v \in \lo$ was arbitrary, the proof is now complete.
\end{proof}

To show that the relations in \eqref{eq:add} below are valid, we need to prove that the convergence in \eqref{y_conv} is true in $L^\infty(0,T;\li)$ as well. This is confirmed by the following

\begin{lemma}\label{lem:loc2}
Let $\{\ell_{\eps}\}$ be the sequence of local minimizers from Lemma \ref{lem:loc} associated to  a local optimum  $\ll$ of \eqref{eq:optt}. Then,
\begin{equation}\label{y_conv2}
  S_\varepsilon(\ell_{\varepsilon}) \to S(\ll)  \quad \text{in } L^\infty(0,T;\li) \times L^\infty(0,T;\li)\quad \text{as }\eps \searrow 0.
  \end{equation}
  \end{lemma}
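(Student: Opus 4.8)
The goal is to upgrade the convergence $S_\eps(\ell_\eps) \to S(\ll)$ of Lemma \ref{lem:loc} from the $\hhyn \times \llu$ topology to the stronger $L^\infty(0,T;\li) \times L^\infty(0,T;\li)$ topology. Write $(q_\eps,\varphi_\eps) := S_\eps(\ell_\eps)$ and $(\uu,\bar\varphi) := S(\ll)$, and abbreviate $\ll_\eps := \ell_\eps$. The plan is to work directly with the state equations \eqref{eq:syst_diff} and its regularized counterpart in \eqref{eq:p_eps}, exploiting the fact that the $\max$- and $f$-nonlinearities interact with $L^\infty$ in a controlled way. First I would treat the $q$-component: subtracting the two ODEs and integrating in time (using $q_\eps(0)=\uu(0)=0$) gives, a.e.\ in $(0,T)$,
\begin{equation*}
\|(q_\eps-\uu)(t)\|_{\li} \leq \frac{1}{\epsilon}\int_0^t \big\| \maxeps(w_\eps(s)) - \max(w(s)) \big\|_{\li}\,ds,
\end{equation*}
where $w_\eps(s) := -\beta(q_\eps(s)-\varphi_\eps(s)) - (f_\eps\circ\HH)(q_\eps)(s)$ and $w(s)$ is the analogous quantity without $\eps$. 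Splitting $\maxeps(w_\eps) - \max(w) = (\maxeps(w_\eps)-\maxeps(w)) + (\maxeps(w)-\max(w))$ and using that $\maxeps:\R\to\R$ is globally Lipschitz with constant $1$ (hence so is its Nemytskii operator on $\li$) and that $\sup_{x}|\maxeps(x)-\max(x)| \leq \eps/2$, one reduces to controlling $\|w_\eps(s)-w(s)\|_{\li}$. That difference is bounded by $\beta\|(q_\eps-\uu)(s)\|_{\li} + \beta\|(\varphi_\eps-\bar\varphi)(s)\|_{\li} + \|(f_\eps\circ\HH)(q_\eps)(s) - (f\circ\HH)(\uu)(s)\|_{\li}$, and the last term is split as $\|(f_\eps\circ\HH)(q_\eps)(s) - (f\circ\HH)(q_\eps)(s)\|_{\li} + \|(f\circ\HH)(q_\eps)(s) - (f\circ\HH)(\uu)(s)\|_{\li}$; the first piece tends to $0$ uniformly in $s$ by \eqref{conv_f_eps} (noting $q_\eps$ stays bounded), and the second is bounded by $L_f\widehat L_\HH\int_0^s\|(q_\eps-\uu)(\zeta)\|_{\li}\,d\zeta$ thanks to the $\li$-Lipschitz estimate of $\HH$ in Assumption \ref{assu:stand_eps}.\ref{it:s1} together with the fact that the Nemytskii operator of the scalar Lipschitz function $f$ is Lipschitz on $\li$.

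Next I would handle the elliptic equation \eqref{eq:el}: the difference $\varphi_\eps-\bar\varphi$ solves $-\alpha\Delta(\varphi_\eps-\bar\varphi) + \beta(\varphi_\eps-\bar\varphi) = \beta(q_\eps-\uu) + (\ll_\eps-\ll)$ in $\hoo$. Because $\Omega\subset\R^N$ with $N\in\{2,3\}$ and $\Omega$ is a bounded Lipschitz domain, elliptic regularity for $-\alpha\Delta+\beta$ gives an estimate of the form $\|\varphi_\eps-\bar\varphi\|_{\li} \leq c\big(\|q_\eps-\uu\|_{\li} + \|\ll_\eps-\ll\|_{\li}\big)$ pointwise in $t$; more precisely one can use a maximum-principle/$L^\infty$-bound for the resolvent of the operator (or, if one prefers, $W^{2,p}$-regularity with $p>N/2$ combined with Sobolev embedding applied to the right-hand side $\beta(q_\eps-\uu)+(\ll_\eps-\ll) \in \li \hookrightarrow L^p$). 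Feeding this back into the $q$-estimate, the $\|(\varphi_\eps-\bar\varphi)(s)\|_{\li}$ term is absorbed into $c\|(q_\eps-\uu)(s)\|_{\li} + c\|(\ll_\eps-\ll)(s)\|_{\li}$, yielding
\begin{equation*}
\|(q_\eps-\uu)(t)\|_{\li} \leq c\int_0^t \|(q_\eps-\uu)(s)\|_{\li}\,ds + c\int_0^t\int_0^s\|(q_\eps-\uu)(\zeta)\|_{\li}\,d\zeta\,ds + r_\eps,
\end{equation*}
where $r_\eps := c\|\ll_\eps-\ll\|_{L^1(0,T;\li)} + c\,\eps + c\,\|(f_\eps\circ\HH)(q_\eps)-(f\circ\HH)(q_\eps)\|_{L^\infty(0,T;\li)} \to 0$ as $\eps\searrow 0$ by \eqref{l_conv} (which implies $\ell_\eps\to\ll$ in $H^1(0,T;\lo)$, hence in $L^1(0,T;\li)$ — here one must be slightly careful, see below) and \eqref{conv_f_eps}.

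The iterated-integral term is dominated by $cT\int_0^t\|(q_\eps-\uu)(s)\|_{\li}\,ds$, so Gronwall's inequality gives $\|(q_\eps-\uu)(t)\|_{\li} \leq c\,r_\eps$ uniformly in $t$, i.e.\ $q_\eps \to \uu$ in $L^\infty(0,T;\li)$; the elliptic estimate then transfers this to $\varphi_\eps\to\bar\varphi$ in $L^\infty(0,T;\li)$, which is \eqref{y_conv2}. The main obstacle I anticipate is twofold: (i) justifying that the data $\ell$ (a priori only in $H^1(0,T;\lo)$) and the states actually live in the requisite $L^\infty$-in-space classes so that all the $\li$-norms above are finite — for the states $q_\eps$ one must bootstrap from the ODE using that $\maxeps(0)=0$ and the $\li$-Lipschitz bound on $f_\eps\circ\HH$ so that $q_\eps\in L^\infty(0,T;\li)$ uniformly in $\eps$, and for $\ll_\eps,\ll$ one needs the convergence $\ell_\eps\to\ll$ to hold in a space embedding into $L^1(0,T;\li)$ (which in $N\in\{2,3\}$ requires an argument, possibly restricting the class of admissible controls or invoking the structure $\ell\in H^1(0,T;\lo)$ and the fact that the elliptic solution operator $\phi$ already smooths $\ell$ — note the right-hand side of the $q$-equation only sees $\varphi_\eps-\bar\varphi$, not $\ll_\eps-\ll$ directly, so one may avoid needing $\ell\in L^1(0,T;\li)$ altogether by using the elliptic $L^\infty$-estimate $\|\varphi_\eps-\bar\varphi\|_{\li}\leq c\|q_\eps-\uu\|_{\li} + c\|\ll_\eps-\ll\|_{\hoo}$ with the weaker dual norm on the right, exploiting that $H^1(0,T;\lo)\hookrightarrow L^2(0,T;\hoo)$ compactly); and (ii) making the elliptic $L^\infty$-estimate rigorous on a merely Lipschitz domain, which is where I would cite the standard $L^p$-theory for $-\alpha\Delta+\beta$ with homogeneous Neumann-type weak formulation and Sobolev embedding in dimension $N\leq 3$. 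Modulo these technical points, the argument is a routine Gronwall bootstrap parallel to the proof of Lemma \ref{lem:lip_S}, merely carried out in $\li$ rather than $\lo$.
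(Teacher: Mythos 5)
Your overall strategy --- subtract the two ODEs, exploit the Lipschitz bound on $\maxlim_{\eps}$, the uniform bound $\sup_x|\maxlim_\eps(x)-\max(x)|\le\eps/2$, Assumption \ref{assu:f_eps}.\ref{it:feps1}, and the $\li$-Volterra estimate of Assumption \ref{assu:stand_eps}.\ref{it:s1}, then close via Gronwall and transfer to $\varphi$ through the elliptic equation --- is exactly what the paper does, and you correctly identify that $\bar q$, $q_\eps$ must first be shown to lie in $L^\infty(0,T;\li)$ by a bootstrap parallel to Proposition \ref{lem:ode}.

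The one genuine misstep is the elliptic $L^\infty$-estimate you propose as the fallback, namely $\|\varphi_\eps(t)-\bar\varphi(t)\|_{\li}\le c\bigl(\|q_\eps(t)-\bar q(t)\|_{\li}+\|\ell_\eps(t)-\ll(t)\|_{\hoo}\bigr)$: the operator $(-\alpha\Delta+\beta)^{-1}$ does not map $\hoo$ into $\li$ (a weak solution with dual-space data is only $\ho$), so this estimate is false and that step would fail. You also do not need your first variant with $\|\ell_\eps(t)-\ll(t)\|_{\li}$, which is unavailable since $\ell$ only lies in $H^1(0,T;\lo)$. The norm that actually works --- and the one the paper uses --- is the intermediate one: by the Stampacchia method (with $N\le 3$, so $L^2(\O)\hookrightarrow L^p(\O)$ for some $p>N/2$) one has the pointwise-in-$t$ bound $\|\varphi_\eps(t)-\bar\varphi(t)\|_{\li}\le c\bigl(\|q_\eps(t)-\bar q(t)\|_{\lo}+\|\ell_\eps(t)-\ll(t)\|_{\lo}\bigr)$. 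This is admissible because $\ell_\eps,\ll\in H^1(0,T;\lo)\hookrightarrow C([0,T];\lo)$, and the resulting residual $\int_0^t\|\ell_\eps(s)-\ll(s)\|_{\lo}\,ds$ tends to zero by \eqref{l_conv}. With that substitution your Gronwall argument closes exactly as in the paper's proof.
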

\begin{proof}
Let us first show that $ ( \bar q,\bar \varphi)$  belongs to $L^\infty(0,T;\li) \times L^\infty(0,T;\li) $. The assertion for $ ( q_\eps, \varphi_\eps)$ follows in a complete analogous way. By taking a look at \eqref{eq:syst_diff}, we see that, since $\ll \in L^\infty(0,T;\lo)$, the mapping $\bar \varphi $ belongs to $ L^\infty(0,T;\li) $; this follows by the so-called Stampacchia method, cf.\ e.g. \cite[Chp. 7.2.2]{troe}. Then, by arguing as in the proof of Proposition \ref{lem:ode}, where one employs Assumption \ref{assu:stand_eps}.\ref{it:s1}, one obtains that $\bar q \in H^1(0,T;\li) \subset L^\infty(0,T;\li)$. Now, to show the convergence \eqref{y_conv2}, we subtract the equation associated to $\bar q$ (see \eqref{eq:ode}) from the one associated to $q_\eps$ (see \eqref{eq:syst_diff11_e}). By using the fact that $|\maxlim{_\varepsilon}(x)-\max(x)| \leq \eps \ \forall\, x \in \R$, and by relying on the Lipschitz continuity of $\max$  and $f$, as well as Assumptions \ref{assu:f_eps}.\ref{it:feps1}, we arrive at 
\begin{align*}
\|(q_\eps-\bar q)(t)\|_{\li} &\leq  2\eps t +c\,  \int_0^t \|q_\eps(s)-\bar q (s) \|_{\li} +\|\varphi_\eps(s)-\bar \varphi (s) \|_{\li}\\&\quad +  L_f  \int_0^t  \|\HH(q_\eps)(s)-\HH(\bar q)(s) \|_{\li} \;ds 
\\
 &\leq  2\eps t +c\,  \int_0^t \|q_\eps(s)-\bar q (s) \|_{\li} +\|\ell_\eps(s)-\bar \ell (s) \|_{\lo}\\&\quad +  L_f    \, \widehat L_{\HH}\int_0^t \int_0^s \|q_\eps(\zeta)-\bar q(\zeta) \|_{\li} \,d\zeta \;ds 
\quad \forall\,t \in [0,T],
\end{align*}where $c>0$ is a constant dependent only on the given data; note that in the last inequality we used Assumption \ref{assu:stand_eps}.\ref{it:s1}. Then, applying  Gronwall's inequality leads to 
$$
\|(q_\eps-\bar q)(t)\|_{\li} \leq 2\eps t + \hat c   \int_0^t \|\ell_\eps(s)-\bar \ell (s) \|_{\lo} \,ds \quad \forall\,t \in [0,T],
$$where $\hat c>0$ is a constant dependent only on the given data. By employing \eqref{l_conv}, we can finally deduce that $
 q_\eps \to \bar q  \  \text{in } L^\infty(0,T;\li) .
$ In view of \eqref{eq:el}, the proof is now complete.
\end{proof}

We are now in the position to state the main result of this subsection.
\begin{proposition}\label{lem:reg}
Suppose that Assumptions \ref{assu:j_ex},  \ref{assu:stand_eps} and  \ref{assu:f_eps} are fulfilled. Let $\ll$ be a local optimum  of \eqref{eq:optt}  with associated state $(\bar q,\bar \varphi) \in H^1_0(0,T;\lo) \times L^2(0,T;\ho)$. Then there exist  adjoint states $$\xi \in H^1_T(0,T;\lo) \text{ and }w \in L^2(0,T;\ho)$$  and  multipliers $\lambda \in L^\infty(0,T;\lo) \text{ and }\mu \in L^\infty(0,T;\lo)$ such that the following optimality system is satisfied
\begin{subequations}\label{eq:adjjj}\begin{gather}
-\dot{\xi}-\beta \big(w-\lambda\big)+\HH'(\bar q)^\star (\mu)=\partial_q j(\bar  q, \bar  \varphi) \ \text{ in }\llz,\quad \xi(T)=0,\label{eq:adj_s1}
\\-\alpha  \Delta w  + \beta \big(w-\lambda\big)=\partial_\varphi j(\bar  q, \bar  \varphi) \ \text{ in }L^2(0,T;\hoo),\label{eq:adj_s2}
\\\lambda(t,x)=\frac{1}{\epsilon} \raisebox{3pt}{$\chi$}_{\{ \bar  z>0\}}(t,x) \xi(t,x) \quad \text{a.e.\ where  }\bar z(t,x) \neq 0,\label{eq:lambda1}
\\ \mu(t,x) =f'(\HH(\bar q)(t,x)) \lambda(t,x) \quad \text{a.e.\ where  }\HH(\bar q)(t,x) \neq n_f,\label{eq:mu1}
\\(w,\dl )_{L^2(0,T;\lo)}+(\ll,\dl)_{H^1(0,T;\lo)}=0 \quad 
\forall \dl \in H^1(0,T;\lo),\label{eq:adj_s3}
\end{gather}\end{subequations}where we abbreviate $\bar z:=-\beta(\bar q-\bar \varphi)-(f \circ \HH)(\bar q)$. 
\end{proposition}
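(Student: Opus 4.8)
The strategy is the classical one for deriving optimality systems via regularization: first write down the optimality system for the smooth auxiliary problem \eqref{eq:p_eps}, then pass to the limit $\eps \searrow 0$ using the convergences from Lemmas \ref{lem:loc} and \ref{lem:loc2}. Since \eqref{eq:p_eps} has a $C^1$ state system (both $\maxeps$ and $f_\eps$ are continuously differentiable), the reduced functional of \eqref{eq:p_eps} is Fréchet differentiable, and a standard adjoint calculus gives, for each $\eps$, adjoint states $\xi_\eps \in H^1_T(0,T;\lo)$ and $w_\eps \in \lu$ solving the linearized-adjoint system
\begin{subequations}
\begin{gather*}
-\dot\xi_\eps - \beta(w_\eps - \lambda_\eps) + [(f_\eps\circ\HH)'(\bar q_\eps)]^\star(\xi_\eps/\eps\cdot\raisebox{3pt}{$\chi$}_{\{\bar z_\eps>0\}}\cdots) = \partial_q j(\bar q_\eps,\bar\varphi_\eps),\quad \xi_\eps(T)=0,\\
-\alpha\Delta w_\eps + \beta(w_\eps-\lambda_\eps) = \partial_\varphi j(\bar q_\eps,\bar\varphi_\eps),\\
\lambda_\eps = \tfrac{1}{\epsilon}\maxeps{}'(\bar z_\eps)\,\xi_\eps,\qquad \mu_\eps = f_\eps'(\HH(\bar q_\eps))\,\lambda_\eps,\\
(w_\eps,\dl)_{\llz}+(\ell_\eps,\dl)_{\hs}+(\ell_\eps-\bar\ell,\dl)_{\hs}=0\quad\forall\,\dl\in\hs,
\end{gather*}
\end{subequations}
where $\bar z_\eps := -\beta(\bar q_\eps-\bar\varphi_\eps)-(f_\eps\circ\HH)(\bar q_\eps)$. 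The solvability of the first adjoint equation for each fixed $\eps$ is where estimate \eqref{eq:h_adj} enters: it shows $[(f_\eps\circ\HH)'(\bar q_\eps)]^\star$ is a Volterra-type operator with the right causality/integrability, so a backward fixed-point argument analogous to the one in Proposition \ref{lem:rd} yields $\xi_\eps$.

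The core of the proof is then the passage to the limit. First I would derive $\eps$-uniform a priori bounds: $|\tfrac{1}{\epsilon}\maxeps{}'(\cdot)|\le \tfrac{1}{\epsilon}$ and $|f_\eps'|\le \hat L_f$ (Assumption \ref{assu:f_eps}.\ref{it:feps2}) give a uniform $L^\infty$-type control on the multipliers $\lambda_\eps,\mu_\eps$ in terms of $\xi_\eps$, and a Gronwall argument in the backward adjoint equation — using \eqref{eq:h_adj} and the Lipschitz/boundedness of $\phi$ — bounds $\|\xi_\eps\|_{H^1_T(0,T;\lo)}$ and $\|w_\eps\|_{\lu}$ uniformly; the gradient equation then bounds $\|\ell_\eps\|$. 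Passing to (weakly / weakly-$\ast$ convergent) subsequences, I obtain limits $\xi, w, \lambda, \mu$. Using \eqref{y_conv}, \eqref{y_conv2} (convergence of states in $L^\infty(0,T;\li)$), the continuity of $\partial_q j, \partial_\varphi j$, and the convergence $f_\eps\to f$, $f_\eps'\to f'$ (Assumption \ref{assu:f_eps}.\ref{it:feps1},\ref{assu:f_epss}), one passes to the limit in the two adjoint PDEs and in the gradient equation \eqref{eq:adj_s3} (here one also needs $\ell_\eps-\bar\ell\to 0$ strongly, which is \eqref{l_conv}, to kill the extra adapted-penalization term). The identification $\HH'(\bar q_\eps)^\star(\mu_\eps)\rightharpoonup \HH'(\bar q)^\star(\mu)$ uses the continuity of $\HH'$ on $\hs$ (Assumption \ref{assu:stand}.\ref{it:stand1}) together with the state convergence.

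The delicate points — and the main obstacle — are the two pointwise relations \eqref{eq:lambda1} and \eqref{eq:mu1}, which must survive the limit \emph{only} on the sets $\{\bar z\neq 0\}$ and $\{\HH(\bar q)\neq n_f\}$. On the set $\{\bar z > 0\}$ one has, for $\eps$ small, $\bar z_\eps > 0$ a.e.\ on compact-in-measure subsets (since $\bar z_\eps \to \bar z$ in $L^\infty(0,T;\li)$ by \eqref{y_conv2} and the definition of $\bar z_\eps$, using $f_\eps\circ\HH \to f\circ\HH$), hence $\maxeps{}'(\bar z_\eps) = 1$ eventually, giving $\lambda_\eps = \xi_\eps/\epsilon$ there; letting $\eps\to0$ and using the weak convergence of $\xi_\eps$ recovers \eqref{eq:lambda1} with the characteristic function $\raisebox{3pt}{$\chi$}_{\{\bar z>0\}}$. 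On $\{\bar z<0\}$ one has $\maxeps{}'(\bar z_\eps)=0$ eventually so $\lambda_\eps=0$ there, consistent with $\raisebox{3pt}{$\chi$}_{\{\bar z>0\}}=0$. Similarly, for \eqref{eq:mu1} one fixes $\delta>0$, restricts to $\{|\HH(\bar q)-n_f|>\delta\}$, uses the uniform convergence $f_\eps'\to f'$ on $(-\infty,n_f-\delta]\cup[n_f+\delta,\infty)$ together with $\HH(\bar q_\eps)\to\HH(\bar q)$ in $L^\infty(0,T;\li)$ to get $f_\eps'(\HH(\bar q_\eps))\to f'(\HH(\bar q))$ there, then combine with $\lambda_\eps\rightharpoonup\lambda$ (using that $f_\eps'(\HH(\bar q_\eps))$ converges \emph{strongly} in $L^\infty$ on this set, so the product passes to the limit), and finally let $\delta\searrow0$. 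The absence of any information on the kink sets $\{\bar z=0\}$ and $\{\HH(\bar q)=n_f\}$ is exactly the loss-of-information inherent to smoothening, and is accounted for by stating \eqref{eq:lambda1}–\eqref{eq:mu1} only off those sets. I would close by a standard argument that the whole sequence (not just a subsequence) converges, or simply leave the statement at the subsequence level since only existence of the multipliers is claimed.
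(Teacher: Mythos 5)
Your proposal follows essentially the same route as the paper's proof: set up the adjoint system for the regularized problem \eqref{eq:p_eps}, use estimate \eqref{eq:h_adj} and a backward Gronwall/fixed-point argument to solve it and get $\eps$-uniform bounds, extract weak/weak-$\ast$ limits, pass to the limit in the adjoint and gradient equations via Lemmas \ref{lem:loc} and \ref{lem:loc2}, and recover \eqref{eq:lambda1}--\eqref{eq:mu1} on the off-kink sets $M_\delta$ and $\widehat M_\delta$ by exploiting $L^\infty(0,T;\li)$ convergence of the states and Assumption \ref{assu:f_eps}.\ref{assu:f_epss}. One small imprecision: the paper shows $f_\eps'(\HH(q_\eps))\to f'(\HH(\bar q))$ only in $L^2(\widehat M_\delta)$ (via Lebesgue dominated convergence, since $f'$ is merely continuous away from $n_f$), not in $L^\infty$ as you claim; this is still sufficient to pass to the limit in the product with $\lambda_\eps\weakly^*\lambda$ in $L^\infty$, so the conclusion is unaffected.
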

\begin{proof}Let $\{\ell_{\eps}\}$ be the sequence of local minimizers from Lemma \ref{lem:loc}. Since  $\ell_{\eps}$ is locally optimal for \eqref{eq:p_eps} and on account of the differentiability properties of $S_{\eps}$, cf.\ Appendix \ref{sec:a}, and $J$, see Assumption \ref{assu:j_ex}, we can write down the necessary optimality condition
 \begin{equation}\label{nec}\begin{aligned}
 j'(S_{\eps}(\ell_{\eps}))(S_{\eps}'(\ell_{\eps})(\dl))+(\ell_{\eps},\dl)_{H^1(0,T;\lo)}
+(\ell_{\eps}-\ll,\dl)_{H^1(0,T;\lo)}=0
\end{aligned}\end{equation} for all $\dl \in H^1(0,T;\lo)$.
 Now, let us consider the system  \small 
\begin{subequations}\label{eq:adjj}\begin{gather}
-\dot{\xi_{\eps}}(t)-\beta \big(w_{\eps}(t)-\frac{1}{\epsilon} \maxlim{_{\eps}}'(z_\eps(t))\xi_{\eps}(t)\big)+\HH'(q_\eps)^\star \Big(f_\eps '(\HH(q_\eps))\big(\frac{1}{\epsilon} \maxlim{_{\eps}}'(z_\eps)\xi_{\eps}\big)\Big)(t)=\partial_q j(S_{\eps}(\ell_{\eps}))(t),\quad \xi_{\eps}(T)=0, \label{eq:adj_syst1}
\\-\alpha  \Delta w_{\eps}(t)  + \beta \big(w_{\eps}(t)-\frac{1}{\epsilon}\maxlim{_{\eps}}'(z_\eps(t))\xi_{\eps}(t) \big)=\partial_\varphi j(S_{\eps}(\ell_{\eps}))(t)  \label{eq:adj_syst2}
\end{gather}\end{subequations}\normalsize a.e.\ in $(0,T)$, where we abbreviate $z_\eps:=-\beta(q_\eps-\varphi_\eps)-(f_\eps \circ \HH)(q_\eps)$ and $(q_{\eps},\varphi_{\eps}):=S_{\eps}(\ell_{\eps})$. In \eqref{eq:adj_syst1}, $\HH'(q_\eps)^\star:\llz \to \llz$ stands for the adjoint operator of $ \HH'(q_\eps)$.  

By  arguments inspired e.g.\ from the proof of \cite[Lem.\ 5.7]{susu_dam} in combination with the estimate \eqref{eq:h_adj}, one obtains that \eqref{eq:adjj} admits a unique solution $(\xi_{\eps}, w_{\eps}) \in H^1_T(0,T;\lo) \times L^2(0,T;\ho)$. Let us go a little more into detail concerning the solvability of \eqref{eq:adj_syst1}. In this context one checks if the mapping  $L^2(0,t;\lo) \ni \eta \mapsto  \GG(\eta) \in H^1(0,t;\lo)$, given by 
\begin{align*}
\GG(\eta)(\tau)&:=\int_0^\tau  \beta \big(w_{\eps}(T-s,\eta(s))-\frac{1}{\epsilon} \maxlim{_{\eps}}'(z_\eps(T-s))\eta(s)\big)
\\&-\underbrace{\HH'(q_\eps)^\star \Big(f_\eps '(\HH(q_\eps))\big(\frac{1}{\epsilon} \maxlim{_{\eps}}'(z_\eps)\eta(T-\cdot)\big)\Big)}_{=[(f_\eps \circ \HH) '(q_\eps)]^\star \big(\frac{1}{\epsilon} \maxlim{_{\eps}}'(z_\eps)(\eta(T-\cdot))\big)}(T-s)+\partial_q j(S_{\eps}(\ell_{\eps}))(T-s) \;ds
\end{align*}for all $\tau \in [0,t]$, is Lipschitzian from $L^2(0,t;\lo)$ to $L^2(0,t;\lo)$ with constant smaller than $1$, for 
$t \in (0,T]$ small enough; here $w_{\eps}(t,v)$ denotes the solution of 
$$-\alpha  \Delta w_{\eps}(t,v)  + \beta \big(w_{\eps}(t,v)-\frac{1}{\epsilon}\maxlim{_{\eps}}'(z_\eps(t))v \big)=\partial_\varphi j(S_{\eps}(\ell_{\eps}))(t)$$
for $t \in [0,T]$ and $v \in \lo.$
We observe that, for all $\eta_1,\eta_2 \in L^2(0,t; \lo)$, the following estimate is true
\begin{align*}
 \| \GG(\eta_1)(\tau)&- \GG(\eta_2)(\tau)\|_{\lo} 
\leq c\,  \int_0^\tau \|\eta_1(s)-\eta_2(s) \|_{\lo} \,ds
\\&\qquad+ \int_0^\tau \hat L_f\, L_{\HH} \, \int_{T-s}^T \|(\eta_1-\eta_2)(T-\zeta)\|_{\lo} \,d\zeta \;ds
 \\ &\leq c \,t^{1/2} \|\eta_1-\eta_2\|_{L^2(0,t;\lo)} +\hat L_f\, L_{\HH} \, \int_0^\tau \int_{0}^s \|(\eta_1-\eta_2)(\zeta)\|_{\lo} \,d\zeta \;ds
\\ &\leq (c \,t^{1/2} + \hat L_f\, L_{\HH} \,t^{3/2}) \|\eta_1-\eta_2\|_{L^2(0,t;\lo)}
\quad  \text{ for all }\tau \in [0,t],
\end{align*}
where in the first inequality we used the global Lipschitz-continuity of $\max_{\eps}$ with constant $1$ and \eqref{eq:h_adj}; now the reader is referred to  the first part of the proof of Proposition \ref{lem:rd} where the exact type of estimate was established in order to obtain that $\eta= \GG(\eta)$ admits a solution in $H_0^1(0,T;\lo)$; finally, a transformation of the variables yields that $(\xi_\eps, w_\eps):=(\eta(T-\cdot), w_\eps(t,\eta(T-\cdot))$ is the solution of the adjoint system \eqref{eq:adjj}.

Testing  \eqref{eq:adjj} with $S_{\eps}'(\ell_{\eps})(\dl)$ and \eqref{eq:ode_lin_q_e} with $(\xi_{\eps}, w_{\eps})$ yields 
\small
$$(w_{\eps},\dl)_{L^2(0,T;\lo)}=j'(S_{\eps}(\ell_{\eps})(S_{\eps}'(\ell_{\eps})(\dl)),$$ \normalsize which inserted in \eqref{nec} gives 
\begin{equation}\label{nec4}
(w_{\eps},\dl )_{L^2(0,T;\lo)}+(\ell_{\eps},\dl)_{H^1(0,T;\lo)}
+(\ell_{\eps}-\ll,\dl)_{H^1(0,T;\lo)}=0
\end{equation} for all $\dl \in H^1(0,T;\lo).$ 
Further, we observe that \begin{subequations} \begin{gather}
\partial_q j(S_{\eps}(\ell_{\eps})) \to \partial_q j(S(\ll)) \ \ \text{ in } L^2(0,T;L^{2}(\Omega)),\label{j_d}
\\\partial_\varphi j(S_{\eps}(\ell_{\eps}))  \to \partial_\varphi j(S(\ll))\ \ \text{ in } L^2(0,T;H^{1}(\Omega)^\ast), \label{j_phi}
\end{gather}\end{subequations}
in the light of \eqref{y_conv} combined with the continuous Fr\'echet-differentiability of $J$ (Assumption \ref{assu:j_ex}).
Next we focus on proving uniform bounds for the regularized adjoint states. By employing again a transformation of the variables where this time we abbreviate $\hat \xi_\eps:=\xi_\eps(T-\cdot)$ and by relying again on  the global Lipschitz-continuity of $\max_{\eps}$ and  \eqref{eq:h_adj}, we obtain from \eqref{eq:adj_syst1} 
\begin{align*}
\|\hat \xi_\eps (t)\|_{\lo}&\leq \int_0^t \| \beta \big(w_{\eps}(T-s,\hat \xi_\eps(s))-\frac{1}{\epsilon} \maxlim{_{\eps}}'(z_\eps(T-s))\hat \xi_\eps(s)\big)\|_{\lo}\;ds 
\\&\quad  +  \int_0^t \|[(f_\eps \circ \HH) '(q_\eps)]^\star \big(\frac{1}{\epsilon} \maxlim{_{\eps}}'(z_\eps)(\hat \xi_\eps(T-\cdot))\big)(T-s)\|_{\lo}\;ds 
\\&\qquad +\int_0^t\|\partial_q j(S_{\eps}(\ell_{\eps}))(T-s)\|_{\lo} \;ds 
\\&\leq \int_0^t c\,(\| \hat  \xi_\eps(s)\|_{\lo}+\|\partial_\varphi j(S_{\eps}(\ell_{\eps}))(T-s)\|_{\hoo})\;ds 
\\& \quad +  \int_0^t \hat L_f L_\HH \underbrace{\int_{T-s}^T \|\hat \xi_\eps(T-\zeta)\|_{\lo}\;d\zeta}_{=\int_{0}^s \|\hat \xi_\eps(\zeta)\|_{\lo}\;d\zeta} \,ds 
\\&\qquad +\int_0^t\|\partial_q j(S_{\eps}(\ell_{\eps}))(T-s)\|_{\lo} \;ds \quad \forall\,t \in [0,T].
\end{align*}
Now, Gronwall's inequality gives in turn 
$$\| \xi_\eps (t)\|_{\lo}\leq \widetilde c \int_0^{T-t} \|\partial_\varphi j(S_{\eps}(\ell_{\eps}))(T-s)\|_{\hoo} +
\|\partial_q j(S_{\eps}(\ell_{\eps}))(T-s)\|_{\lo} \;ds 
$$ for all $t \in [0,T]$. Thus, by relying on \eqref{j_d}-\eqref{j_phi} and by estimating again as above in \eqref{eq:adj_syst1}, this time without integrating, one obtains that there exists a constant, independent of $\eps$, such that $$\|\xi_{\eps}\|_{H^1(0,T;\lo)} \leq c.$$  As a consequence, $$\lambda_{\eps}:=\frac{1}{\epsilon} \maxlim{_{\eps}}'(z_{\eps})\xi_{\eps}$$ and $$\mu_\eps:= f_\eps '(\HH(q_\eps))\lambda_{\eps}$$ are uniformly bounded in $L^\infty(0,T;\lo)$ (recall that $\max_\eps$ and $f_\eps$ are globally Lipschitz continuous with constants independent of $\eps$). 
From \eqref{eq:adj_syst2} we can further deduce that there exists a constant $c>0$, independent of $\eps$, such that $\|w_{\eps}\|_{L^2(0,T;\ho)} \leq c$, where we use again \eqref{j_phi}. Therefore, we can extract weakly convergent subsequences (denoted by the same symbol) so that
\begin{equation}\label{nec2}\begin{split}
w_{\eps} \weakly w \quad \text{in }L^2(0,T;\ho), \quad \xi_{\eps} \weakly \xi \quad \text{in }H^1(0,T;\lo),
\\\  \lambda_{\eps} \weakly^* \lambda,\quad \mu_\eps \weakly^* \mu \quad \text{in }L^\infty(0,T;\lo) \quad\text{as }\eps \to 0.
\end{split}
\end{equation}
Owing to  \eqref{nec2}, \eqref{j_d}, \eqref{j_phi} and \eqref{l_conv}, we can pass to the limit  in \eqref{eq:adjj}-\eqref{nec4}. This results in 
\begin{subequations}\begin{gather}
-\dot{\xi}-\beta \big(w-\lambda\big)+\HH'(\bar q)^\star \mu =\partial_q j(\bar  q, \bar  \varphi) \ \text{ in }\llz,\quad \xi(T)=0,
\\-\alpha  \Delta w  + \beta \big(w-\lambda\big)=\partial_\varphi j(\bar  q, \bar  \varphi) \ \text{ in }L^2(0,T;\hoo),
\\(w,\dl )_{L^2(0,T;\lo)}+(\ll,\dl)_{H^1(0,T;\lo)}=0 \quad 
\forall \dl \in H^1(0,T;\lo),
\end{gather}
\end{subequations}
where for the passage to the limit in \eqref{eq:adj_syst1} we also relied on the continuity of the derivative of $\HH$ (see Assumption \ref{assu:stand}.\ref{it:stand1}) combined with \eqref{y_conv}.

Now, it remains to prove that \eqref{eq:lambda1}-\eqref{eq:mu1} is true. To this end, we show that, for each $\delta >0$, we have 
\begin{subequations}\label{eq:add}\begin{gather}
\lambda=\frac{1}{\epsilon}\maxlim '(\bar z) \xi \quad \text{a.e.\ in }M_\delta,\label{eq:lambda}
\\\mu=f'(\HH(\bar q))\lambda \quad \text{a.e.\ in }\widehat M_\delta,\label{eq:mu}
\end{gather}\end{subequations}where we abbreviate 
$M_\delta:=\{(t,x):|\bar z(t,x)| \geq \delta\}$, $\bar z:=-\beta(\bar q-\bar \varphi)-(f \circ \HH)(\bar q)$, and $\widehat M_\delta:=\{(t,x):|\HH(\bar q)(t,x)-n_f| \geq \delta\}$.

We begin by observing that 
\begin{align*}
 \|\HH(q_\eps)(t)-\HH(\bar q)(t) \|_{\li} \leq  \widehat L_{\HH} \|q_\eps-\bar q \|_{L^1(0,T;\li)} \quad  \text{ a.e.\ in }(0,T),
\end{align*}
in light of  Assumption \ref{assu:stand_eps}.\ref{it:s1}. Thus, as a consequence of \eqref{y_conv2}, we have  
\begin{equation}\label{hq}
\HH(q_\eps) \to \HH(\bar q) \quad \text{ in }L^\infty (0,T;\li),
\end{equation}
 which then implies $$z_\eps \to \bar z \quad \text{ in }\lii,$$ by  the Lipschitz continuity of  $f$ and  Assumption \ref{assu:f_eps}.\ref{it:feps1}. This means that $|z_\eps(t,x)| \geq \delta/2$ f.a.a.\,$(t,x) \in M_\delta$ for $\eps$ small enough, independent of $(t,x)$.
In view of the definition of $\maxlim{_{\eps}}$  we have 
$$\maxlim{_{\eps}}'(z_\eps(\cdot))=\maxlim'(\bar z(\cdot)) \quad \text{a.e.\ in }M_\delta$$for $\eps \leq \delta/2.$
The definition of $\lambda_\eps$ and \eqref{nec2} now yield \eqref{eq:lambda}.
To show \eqref{eq:mu}, we proceed in a similar way. Thanks to \eqref{hq}, there exists an $\eps$ small enough, independent of $(t,x)$, so that 
$|\HH(q_\eps)(t,x)-n_f| \geq \delta/2$ f.a.a.\,$(t,x) \in \widehat M_\delta$. 
Assumption \ref{assu:f_eps}.\ref{assu:f_epss} applied for $\delta/2$ then gives in turn the convergence
$$f_\eps'(\HH(q_\eps))- f'(\HH(q_\eps)) \to 0 \quad \text{ in }L^\infty (\widehat M_\delta).$$ 
As another consequence of Assumption \ref{assu:f_eps}.\ref{assu:f_epss}, we obtain that $f'$ is continuous on $(-\infty, n_f-\delta/2] \cup [n_f+\delta/2,\infty)$ since $f_\eps' \in C^1(\R),$ by assumption. Now, \eqref{hq}, Assumption \ref{assu:stand}.\ref{it:stand2} and Lebesgue dominated convergence imply that 
$$f'(\HH(q_\eps))- f'(\HH(\bar q)) \to 0 \quad \text{ in }L^2 (\widehat M_\delta).$$ 
Finally, the convergence of $\{\lambda_\eps\}$ from \eqref{nec2} along with the definition of $\mu_\eps$ yield that $$\mu_\eps \weakly \mu=f'(\HH(\bar q))\lambda \quad \text{ in }L^1 (\widehat M_\delta),$$i.e., \eqref{eq:mu}. Since $\delta >0$ was arbitrary and since $\underset{\delta>0}{\cup} M_\delta =\{(t,x):\bar z (t,x) \neq 0\}$ and $\underset{\delta>0}{\cup} \widehat M_\delta =\{(t,x):\HH(\bar q) (t,x) \neq n_f\}$ (up to a set of measure zero), the proof is now complete.
\end{proof}

\begin{remark}\label{rem:kkt}
\begin{itemize}
\item If $\bar z(t,x) \neq 0$ and if ${\HH(\bar q)(t,x) \neq n_f}$ a.e.\ in $(0,T) \times \O$, then the optimality system in 
Proposition \ref{lem:reg} coincides with the very same optimality conditions which one obtains when directly applying the KKT-theory {to} \eqref{eq:optt}, cf.\ \cite{troe}.
Moreover, we observe that \eqref{eq:adjjj} does not contain any information as to what happens in those $(t,x)$ for which $\bar z(t,x)$ and $\HH(\bar q)(t,x)$ are  non-smooth points of the mappings $\max$ and $f$, respectively. This is the focus of the next section, where the optimality conditions from Proposition \ref{lem:reg}  shall be  improved.
\item Indeed, \eqref{eq:adjjj} is not the best optimality system one could obtain via regularization. Such a system should also contain the relations
\begin{subequations}\label{conc}\begin{gather}
\lambda(t,x)\in \frac{1}{\epsilon} \partial \max(0) \xi(t,x) \quad \text{a.e.\ where  }\bar z(t,x) = 0,
\\ \mu(t,x) \in \partial f(n_f) \lambda(t,x) \quad \text{a.e.\ where  }\HH(\bar q)(t,x) =n_f.
\end{gather}\end{subequations}
We acknowledge the results \cite[Thm.\,2.4]{tiba}, \cite[Prop.\,2.17]{mcrf}, \cite[Thm.\,4.4]{cr_meyer}, where the respective limit optimality systems, though not strong stationary, include such relations between multipliers and adjoint states on the sets where the non-smoothness is active. We cannot expect this to happen in the present paper; by contrast to the aforementioned contributions, our adjoint state $\xi_\eps \in H^1(0,T;\lo)$ converges weakly in a space which is not compactly embedded in a Lebesgue space. Although we are able to show $$\maxlim{_\eps}'(\bar z_\eps(\cdot)) \weakly^* \gamma \in \partial \max(\bar z(\cdot)) \text{ in }L^\infty((0,T )\times\O),$$
this does not help us conclude \eqref{conc}, in view of the lack of space regularity of the adjoint state.
\end{itemize}
\end{remark}

\subsection{Towards strong stationarity}\label{sec:t}
In this section, we aim to derive a stronger optimality system than \eqref{eq:adjjj}. To this end, we will employ  arguments from previous works \cite{paper, st_coup}, which are entirely based on the limited differentiability properties of the non-smooth mappings involved. We begin by stating the  first order necessary optimality conditions in primal form.
\begin{lemma}[B-stationarity] If $\bar \ell \in \hs$ is locally optimal for \eqref{eq:optt}, then there holds
 \begin{equation}\label{eq:vi}
 j'(S(\bar \ell))S'(\bar \ell;\dl)+(\bar \ell , \dl )_{H^1(0,T;\lo)}   \geq 0   \quad \forall\, \dl\in \hs. \end{equation}
\end{lemma}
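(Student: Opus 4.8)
The statement is the standard first-order necessary condition for the reduced problem, so the plan is to reduce \eqref{eq:optt} to an unconstrained optimization over $\ell \in \hs$ via the control-to-state map and then differentiate in feasible directions. First I would introduce the reduced objective $\hs \ni \ell \mapsto \jj(\ell):=j(S(\ell))+\tfrac12\|\ell\|^2_{H^1(0,T;\lo)} = J(S(\ell),\ell)$, using Assumption \ref{assu:j_ex} to split off the quadratic control term. Since $\bar\ell$ is locally optimal for \eqref{eq:optt} and since, by Proposition \ref{lem:ode}, every feasible point of \eqref{eq:optt} is of the form $(S(\ell),\ell)$, the point $\bar\ell$ is a local minimizer of $\jj$ over the whole space $\hs$ (the admissible set is all of $\hs$, so there is no constraint cone to intersect — every direction $\dl \in \hs$ is feasible).

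Next I would show that $\jj$ is directionally differentiable at $\bar\ell$ in every direction $\dl \in \hs$, with
\begin{equation*}
\jj'(\bar\ell;\dl) = j'(S(\bar\ell))\,S'(\bar\ell;\dl) + (\bar\ell,\dl)_{H^1(0,T;\lo)}.
\end{equation*}
This combines three ingredients already available: the directional differentiability of $S:\llU \to \hhyn\times\llu$ from Proposition \ref{lem:rd} (noting $\hs \embed \llU$, so $\dl$ is also an admissible direction there), the continuous Fr\'echet-differentiability of $j$ from Assumption \ref{assu:j_ex} (which upgrades to a chain rule for the composition $j\circ S$ along directions, since Fr\'echet-differentiable $\circ$ directionally-differentiable is directionally differentiable), and the exact differentiability of the quadratic term $\ell \mapsto \tfrac12\|\ell\|^2_{H^1(0,T;\lo)}$, whose derivative at $\bar\ell$ in direction $\dl$ is $(\bar\ell,\dl)_{H^1(0,T;\lo)}$.

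Finally, local optimality of $\bar\ell$ for $\jj$ gives, for fixed $\dl$ and all sufficiently small $\tau>0$, that $\jj(\bar\ell+\tau\dl)\ge \jj(\bar\ell)$; dividing by $\tau$ and letting $\tau\searrow 0$ yields $\jj'(\bar\ell;\dl)\ge 0$, which is precisely \eqref{eq:vi}. Since $\dl\in\hs$ was arbitrary, the claim follows. I do not anticipate a genuine obstacle here: the only point requiring a little care is making sure the chain rule for $j\circ S$ is applied legitimately — $S$ maps into $\hhyn\times\llu$ but $j$ is only assumed Fr\'echet-differentiable on $L^2(0,T;\lo)\times\llu$, so one should compose with the continuous embedding $\hhyn\embed L^2(0,T;\lo)$ before invoking Assumption \ref{assu:j_ex}; with the directional derivative $S'(\bar\ell;\dl)$ lying in the same spaces, this is immediate.
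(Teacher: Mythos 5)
Your proposal is correct and follows essentially the same route as the paper: reduce to the unconstrained reduced objective, obtain its directional differentiability by combining the directional differentiability of $S$ (Proposition \ref{lem:rd}) with the Fr\'echet differentiability of $j$ (Assumption \ref{assu:j_ex}) via a chain rule, and conclude from local optimality. The only cosmetic difference is that the paper phrases the chain-rule step in terms of Hadamard directional differentiability and cites Shapiro, whereas you observe (correctly) that with a Fr\'echet-differentiable outer map the simpler argument suffices; your remark about passing through the embedding $\hhyn \embed L^2(0,T;\lo)$ before applying $j'$ is a sound, if tacit in the paper, bookkeeping point.
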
\vspace{-0.4cm}
\begin{proof} As a result of Proposition \ref{lem:rd} and  Assumption \ref{assu:j_ex} we have  that  the composite mapping 
 $\hs \ni \ell \mapsto J(S(\ell),\ell) \in \R$ is (Hadamard) directionally differentiable \cite[Def.\ 3.1.1]{schirotzek} at $\ll$ in any direction $\dl$ with 
 directional derivative $  \partial_{(q,\varphi)} J(S(\bar \ell),\bar \ell )S'(\bar \ell;\dl)+\partial_{\ell} J(S(\bar \ell),\bar \ell ) \dl$; see  \cite[Lem.\ 3.1.2(b)]{schirotzek} and \cite[Prop.\ 3.6(i)]{shapiro}.
 The result then follows immediately from the local optimality of $\bar \ell$ and Assumption \ref{assu:j_ex}. 
\end{proof}

In order to improve the optimality conditions from the previous section \ref{sec:reg}, we make use of the following very natural requirement:

\begin{assumption}\label{assu:mon}
The \textit{history operator} $\HH$ satisfies the  monotonicity condition
$$\HH(q_1) \geq \HH(q_2) \quad \forall\,q_1,q_2 \in \llz \text{ with }q_1 \geq q_2.$$
\end{assumption}
\begin{remark}
It is self-evident that the cumulated damage  $\HH(q)$ (fatigue level of the material) increases as the damage $q$ increases. Hence, 
 the condition in Assumption \ref{assu:mon} is always satisfied in applications. \end{remark}
 
As an immediate consequence of Assumption \ref{assu:mon}, we have 
\begin{equation}\label{eq:d_h}
\HH'( q)(\eta)=\lim_{\tau \searrow 0} \frac{\HH( q+\tau \eta)-\HH( q)}{\tau} \geq 0\quad  \text{ a.e.\ in }(0,T) \times \O
\end{equation}
for all $q,\eta \in \llz$ with $\eta \geq 0\text{ a.e.\ in }(0,T) \times \O$.

The main result of this section reads as follows.
\begin{theorem}\label{thm:ss_qsep}
Suppose that Assumptions \ref{assu:j_ex},  \ref{assu:stand_eps},  \ref{assu:f_eps} and \ref{assu:mon} are fulfilled.
 Let $\bar \ell \in \hs$ be locally optimal for \eqref{eq:optt} with associated states  \begin{equation*}
  \bar q \in \hhyn \quad \text{and} \quad \bar \varphi \in \llu. 
 \end{equation*}   Then, there exist  adjoint states
 \begin{equation*}
  \xi \in H^{1}_T(0,T;\lo) \quad \text{and} \quad w \in \llu,
 \end{equation*}  
 and  multipliers $\lambda \in L^{\infty}(0,T;\lo)$ and $\mu \in L^\infty(0,T;\lo)$ such that the following system is satisfied 
 \begin{subequations}\label{eq:strongstat_q}
 \begin{gather}
-\dot{\xi}-\beta \big(w-\lambda\big)+\HH'(\bar q)^\star (\mu)=\partial_q j(\bar  q, \bar  \varphi) \ \text{ in }\llz,\quad \xi(T)=0,\label{eq:adjoint1_q}
\\-\alpha  \Delta w  + \beta \big(w-\lambda\big)=\partial_\varphi j(\bar  q, \bar  \varphi) \ \text{ in }L^2(0,T;\hoo),\label{eq:adjoint2_q}
 \\\left. \begin{aligned} 
  \lambda(t,x)&=\frac{1}{\epsilon} \raisebox{3pt}{$\chi$}_{\{ \bar  z>0\}}(t,x) \xi(t,x) \quad \text{a.e.\ where  }\bar z(t,x) \neq 0,\\
  \mu(t,x) &=f'(\HH(\bar q)(t,x)) \lambda(t,x) \quad \text{a.e.\ where  }\HH(\bar q)(t,x) \neq n_f,\end{aligned}\right\}\label{eq:kkt2}
 \\\left. \begin{aligned} 
 0 \leq \lambda(t,x)&\leq \frac{1}{\epsilon} (\xi(t,x) +G^+(t,x)) \quad \text{a.e.\ where  }\bar z(t,x) = 0,\\
  G^-(t,x) &\leq 0 \leq G^+(t,x) \quad \text{a.e.\ where  }\bar z(t,x) > 0,
  \end{aligned}\right\}\label{eq:signcond}
\\[1mm] (w,\dl )_{L^2(0,T;\lo)}+(\ll,\dl)_{H^1(0,T;\lo)}=0 \quad 
\forall \dl \in H^1(0,T;\lo),\label{eq:grad_q}
 \end{gather} 
 \end{subequations}    where we abbreviate $\bar z:=-\beta(\bar q-\bar \varphi)-(f \circ \HH)(\bar q)$. 
In \eqref{eq:signcond}, the mappings $G^+,G^- :[0,T] \times \O$ are defined as follows
 \begin{equation}\label{eq:g_plus}
  \begin{aligned}
  G^+(t,x)&:= \int_t^T  \HH'(\bar q)^\star[\raisebox{3pt}{$\chi$}_{\{\HH(\bar q)=n_f\}}(-\lambda f_+'(n_f)+\mu)](s,x)\;ds,
  \\G^-(t,x)&:= \int_t^T  \HH'(\bar q)^\star[\raisebox{3pt}{$\chi$}_{\{\HH(\bar q)=n_f\}}(-\lambda f_-'(n_f)+\mu)](s,x)\;ds,  \end{aligned} 
\end{equation}where, for any $v\in \R$, the right- and left-sided derivative of $f: \R \to \R$  are given by 
  $f'_+(v) := f'(v;1)$ and $ f'_-(v) := - f'(v;-1)$,  respectively. \end{theorem}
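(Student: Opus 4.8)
The plan is to start from the B-stationarity inequality \eqref{eq:vi} and exploit the structure of the linearized system \eqref{eq:ode_lin_q} together with a ``surjectivity'' argument in the spirit of \cite{paper, st_coup}. The first step is to rewrite the directional derivative $S'(\bar\ell;\dl)=(\dd,\df)$ by decomposing the $\max'$ term in \eqref{eq:ode_lin_q1}. On the set $\{\bar z\neq 0\}$ we have $\max'(\bar z;\cdot)=\raisebox{3pt}{$\chi$}_{\{\bar z>0\}}(\cdot)$, which is linear; on the biactive set $\{\bar z=0\}$ the derivative is the nonlinear map $\max(\cdot)=\max\{\cdot,0\}$, so that $\dot\dd(t) = \frac{1}{\epsilon}\raisebox{3pt}{$\chi$}_{\{\bar z>0\}}(t)\,r(t) + \frac{1}{\epsilon}\raisebox{3pt}{$\chi$}_{\{\bar z=0\}}(t)\max\big(r(t)\big)$ where $r:= -\beta(\dd-\df) - f'(\HH(\bar q);\HH'(\bar q)(\dd))$. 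Likewise, on $\{\HH(\bar q)\neq n_f\}$ the directional derivative $f'(\HH(\bar q);\cdot)$ acts as multiplication by $f'(\HH(\bar q))$, while on $\{\HH(\bar q)=n_f\}$ it is $v\mapsto f'_+(n_f)v^+ - f'_-(n_f)v^-$ with $v:=\HH'(\bar q)(\dd)$; note that by Assumption \ref{assu:mon} and \eqref{eq:d_h} the sign of $v$ is determined by the sign of $\dd$ on monotone directions, which will be the source of the sign conditions in \eqref{eq:signcond}.

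The second step is to introduce the adjoint system. Define $(\xi,w)$ as the solution of the backward system \eqref{eq:adjoint1_q}--\eqref{eq:adjoint2_q}, where $\lambda\in L^\infty(0,T;\lo)$ and $\mu\in L^\infty(0,T;\lo)$ are \emph{a priori unknown} multipliers required to satisfy the complementarity relations \eqref{eq:kkt2} on the inactive sets. Existence of such a quadruple $(\xi,w,\lambda,\mu)$ — at least with \eqref{eq:kkt2} and \emph{some} bounds on $\lambda$ on the biactive set — is already guaranteed by Proposition \ref{lem:reg}; the content of the theorem is to \emph{upgrade} the information on the biactive sets to \eqref{eq:signcond}. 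So the real work is: given the weak limits $(\xi,w,\lambda,\mu)$ from the regularization, show that $\lambda$ and the auxiliary functions $G^\pm$ from \eqref{eq:g_plus} obey the claimed sign conditions. To this end I would test the adjoint system against $(\dd,\df) = S'(\bar\ell;\dl)$ for cleverly chosen directions $\dl$ and compare with \eqref{eq:vi}. Integration by parts in time (using $\xi(T)=0$, $\dd(0)=0$) together with the elliptic duality between \eqref{eq:adjoint2_q} and \eqref{eq:ode_lin_q2} collapses the bulk terms, leaving a boundary/sign term of the form $\frac{1}{\epsilon}\int\!\!\int_{\{\bar z=0\}} (\xi - \text{something involving }G^\pm)\,(\text{free sign part of }\dd)$ plus $\int\!\!\int_{\{\HH(\bar q)=n_f\}}(\cdots)$, which must be $\geq 0$ for all admissible directions.

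The third step is the surjectivity / range argument: one must show that, as $\dl$ ranges over $\hs$ (equivalently, as we parametrise via the linearized state), the ``free'' components — namely the value of $\max(r)$ on $\{\bar z=0\}$ and the value of $v^\pm = (\HH'(\bar q)\dd)^\pm$ on $\{\HH(\bar q)=n_f\}$ — can be prescribed to be essentially arbitrary (nonnegative, or of arbitrary sign) on these sets, at least densely and with enough localization in $(t,x)$. This is exactly the ``surjectivity trick'': one solves the linearized state equation for a \emph{given} target of the nonsmooth term and checks that the corresponding $\dl$ (obtained by inverting the linear elliptic relation \eqref{eq:ode_lin_q2} and differentiating \eqref{eq:ode_lin_q1}) lies in $\hs$. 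Feeding such directions into the collapsed inequality from step two, and using the fundamental lemma of the calculus of variations, forces the pointwise sign conditions \eqref{eq:signcond}, with $G^\pm$ precisely the expressions in \eqref{eq:g_plus} coming from the $\HH'(\bar q)^\star$ contribution of the kink set $\{\HH(\bar q)=n_f\}$ propagated backward in time (hence the integral $\int_t^T$).

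\textbf{Main obstacle.}
I expect the hard part to be step three in the \emph{doubly} nonsmooth regime: the two kink sets $\{\bar z=0\}$ and $\{\HH(\bar q)=n_f\}$ interact because $f'(\HH(\bar q);\cdot)$ sits \emph{inside} the argument of $\max'(\bar z;\cdot)$, so the free directions for the outer nonsmoothness are constrained by what the inner one allows, and vice versa — this is precisely why full strong stationarity is not attainable and why the mixed terms $G^\pm$ (rather than a clean sign condition on $\mu$ alone) appear. Technically, the delicate points will be: (i) justifying that one may localize test directions on the biactive sets while keeping them in $\hs$, given that $\hs$ is a \emph{smoothing} space in time (so one cannot use arbitrary $L^2$ directions and must mollify, controlling the error); (ii) passing to the limit in products like $\frac{1}{\epsilon}\maxlim_\eps'(z_\eps)\xi_\eps$ on the biactive set, where $\xi_\eps\weakly\xi$ only weakly in $H^1(0,T;\lo)$ — a space without compact embedding into any Lebesgue space, as flagged in Remark \ref{rem:kkt} — so one must work with the \emph{inequalities} $0\le \maxlim_\eps'(z_\eps)\le 1$ and $f'_-(n_f)\le f_\eps'(\HH(q_\eps))\le f'_+(n_f)$ (up to $o(1)$) near the respective kinks rather than with limits of the derivatives themselves; and (iii) bookkeeping the backward-in-time propagation through $\HH'(\bar q)^\star$, which is why $G^\pm$ are Volterra-type integrals $\int_t^T$ and why one needs the adjoint estimate \eqref{eq:h_adj} to make sense of them in $L^\infty(0,T;\lo)$.
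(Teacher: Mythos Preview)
Your overall architecture matches the paper's: take $(\xi,w,\lambda,\mu)$ from Proposition~\ref{lem:reg}, then test B-stationarity \eqref{eq:vi} with carefully chosen $\dl_n$, integrate the adjoint system by parts against $(\dd_n,\df_n)=S'(\bar\ell;\dl_n)$, and use a density/surjectivity argument together with Assumption~\ref{assu:mon} to extract pointwise sign conditions. The identification of $G^\pm$ as the backward-in-time propagation through $\HH'(\bar q)^\star$ of the inner kink contribution is also exactly right.

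Two points in your step three and your ``main obstacle'' deviate from what the paper actually does and could send you down a harder road than necessary. First, the paper does \emph{not} try to prescribe simultaneously the outer free component $\max(r)$ on $\{\bar z=0\}$ \emph{and} the inner component $v^\pm=(\HH'(\bar q)\dd)^\pm$ on $\{\HH(\bar q)=n_f\}$; as Remark~\ref{exp} explains, the inner density \eqref{hh} is unavailable. Instead the paper shows (step~I) that the \emph{full argument} $\rho_n:=-\beta(\dd_n-\df_n)-(f\circ\HH)'(\bar q;\dd_n)$ of the outer $\max'$ is dense in $L^2(0,T;\lo)$, then passes to the limit to obtain an inequality valid for \emph{all} $\rho\in L^2(0,T;\lo)$, and finally restricts to $\rho\ge 0$ (respectively $\rho\le 0$). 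The point is that this sign restriction on $\rho$ forces $\widehat q_\rho(t)=\frac{1}{\epsilon}\int_0^t\max'(\bar z;\rho)\,ds\ge 0$ (resp.\ $\le 0$), whence by \eqref{eq:d_h} $\HH'(\bar q)(\widehat q_\rho)\ge 0$ (resp.\ $\le 0$), and this \emph{determines} whether $f'_+$ or $f'_-$ appears via \eqref{eq:d_f}; a Fubini computation then produces exactly $G^+$ (resp.\ $G^-$). So the inner nonsmoothness is handled by a sign restriction on the outer test, not by a second surjectivity. Second, your obstacle~(ii) about weak limits of $\frac{1}{\epsilon}\max_\eps'(z_\eps)\xi_\eps$ is a red herring for this theorem: once Proposition~\ref{lem:reg} is in hand, the entire argument for \eqref{eq:signcond} runs at the limit level through B-stationarity and never revisits the regularization; no further weak-limit products need to be analyzed.
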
 

\begin{proof}
The existence of a tuple $(\xi,w,\lambda,\mu)\in H^1(0,T;\lo) \times L^2(0,T;\ho) \times  L^\infty(0,T;\lo) \times L^\infty(0,T;\lo)$ satisfying the  system \eqref{eq:adjoint1_q}-\eqref{eq:adjoint2_q}-\eqref{eq:kkt2}-\eqref{eq:grad_q} is due to Proposition \ref{lem:reg}. Thus, the rest of the proof is focused on {showing \eqref{eq:signcond}}. In this context, we first follow the ideas from \cite[Proof of Lem.\ 2.8]{st_coup} and prove that the set of arguments of $\maxlim ' (\bar z;\cdot)$ from \eqref{eq:ode_lin_q1} is dense in $\llz$ (step (I)). {With this information at hand, we are then able to  show the desired result by employing a technique from  \cite[Proof of Thm.\ 2.11]{st_coup}, see also 
 \cite[Proof of Thm.\ 5.3]{paper} (step (II)).}

(I) Let $\rho \in \llz$ be arbitrary, but fixed. 
As indicated above, we next  show that there exists $\{\dl_n\} \subset H^1(0,T;\lo)$ such that 
 \begin{equation}\label{eq:rho}
  \begin{aligned}[t]
 \underbrace{ -\beta(\delta q_n - \delta \varphi_n)-(f \circ \HH)'(\bar q;\delta q_n)}_{:=\rho_n} \to \rho \quad \text{in }\llz \quad \text{as }n \to \infty,
  \end{aligned} 
\end{equation}where we abbreviate
 $(\delta q_n, \delta \varphi_n) := S'(\ll;\dl_n)$ and $\rho_n:= -\beta(\delta q_n - \delta \varphi_n)-(f \circ \HH)'(\bar q)(\delta q_n)$ for all $n \in \N$.  To this end, we follow the lines of the proof of  \cite[Lem.\ 2.8]{st_coup}. 
We start by noticing that the mapping 
  \begin{equation*}\label{eq:y_hat}
[0,T] \ni t \mapsto \hat q(t) \in \lo, \quad \hat q(t):=\frac{1}{\epsilon} \int_0^t  \maxlim ' (\bar z(s); \rho(s))\;ds  \end{equation*}
satisfies  $\hat q(0)=0$ and $\hat q \in H^1(0,T;\lo)$. Then, we  observe that $\hat q$ fulfills 
 \begin{equation}\label{eq:yh}
   \frac{d}{dt} \hat q(t) = \frac{1}{\epsilon} \maxlim '(\bar z(t);   -\beta \hat q(t) -(f \circ \HH)'(\bar q;\hat q)(t) +\rho(t) + \beta \hat q(t) +(f \circ \HH)'(\bar q;\hat q)(t)\big) \quad \text{\ae in } (0,T).\end{equation}
In view of the embedding $H^1(0,T;C_c^\infty(\O)) \dense \llz$, there exists a sequence $\{ \hat \varphi_n\}_n \subset H^1(0,T;C_c^\infty(\O))$ such that  
  \begin{equation}\label{eq:u_hat}
\beta \hat \varphi_n \to \rho +  \beta \hat q +(f \circ \HH)'(\bar q;\hat q) \quad \text{in } \llz \ \ \text{as } n \to \infty.
 \end{equation}
For any $n \in \N$, consider the equation
 \begin{equation}\label{eq:ym}
   \frac{d}{dt} \hat q_n(t) =\frac{1}{\epsilon}  \maxlim '(\bar z(t);  -\beta(\hat q_n - \hat \varphi_n)-(f \circ \HH)'(\bar q;\hat q_n) \big) \quad \text{\ae in } (0,T),\ \hat q_n(0) = 0.\\ \end{equation}
By arguing as in the proof of Lemma \ref{lem:rd} we see  that \eqref{eq:ym} admits a unique solution $\hat q_n \in H^1_0(0,T;\lo)$. Now, we define
   \begin{equation}\label{eq:l_hat}
 \delta \ell_n:=   -\alpha  \Delta \hat \varphi_n  + \beta \big(\hat \varphi_n-\hat q_n \big) \in \hs,
 \end{equation} 
 such that the pair $(\hat q_n,\hat \varphi_n) $ solves the system \eqref{eq:ode_lin_q} associated to $\ll$ with right-hand side $\delta \ell_n \in \hs$; note that the regularity of $\dl_n$ in \eqref{eq:l_hat} is due to the $H^1(0,T;C_c^\infty(\O))$-regularity of $\hat \varphi_n$.
 In view of the unique solvability of \eqref{eq:ode_lin_q}, cf.\ Proposition \ref{lem:rd}, $(\hat q_n,\hat \varphi_n)= S'(\ll;\dl_n)$.
Owing to the Lipschitz-continuity of the directional derivative of $\max$ (w.r.t.\ direction) and \eqref{eq:h'} , we further  obtain from \eqref{eq:yh} and \eqref{eq:ym} 
\begin{align*}
\epsilon\| (\hat q_n-\hat q)(t)\|_{\lo} &\leq \beta\,  \int_0^t \|(\hat q-\hat q_n)(s) \|_{\lo} \,ds+L_f    \, L_{\HH}\int_0^t \int_0^s \|(\hat q-\hat q_n)(\zeta) \|_{\lo}  \,d\zeta \;ds
\\&\qquad + \int_0^t \|-\beta \hat \varphi_n (s)+\rho(s)+  \beta \hat q(s) +(f \circ \HH)'(\bar q;\hat q)(s)\|_{\lo} \,ds.
\end{align*}Gronwall's inequality and \eqref{eq:u_hat} then give in turn 
  \begin{equation}\label{eq:conv}
\| \hat q_n-\hat q\|_{H^1(0,T;\lo)} \leq c\,\|-\beta \hat \varphi_n +\rho+  \beta \hat q +(f \circ \HH)'(\bar q;\hat q)\|_{\llz}\to 0 \ \ \text{as } n \to \infty ,\end{equation}where $c>0$ is a constant dependent only on the given data. 
By relying on the continuity of $\F'(\bar q;\cdot):\llz \to \llz$, cf.\,\eqref{eq:h'}, we have
  \begin{equation}\label{eq:uh}
\beta \hat q_n +(f \circ \HH)'(\bar q;\hat q_n) \to \beta \hat q +(f \circ \HH)'(\bar q;\hat q) \quad \text{in } \llz \ \ \text{as } n \to \infty,
 \end{equation}as a result of \eqref{eq:conv}.
Combining \eqref{eq:u_hat} and \eqref{eq:uh} finally yields 
  \begin{equation*}
 -\beta(\hat q_n - \hat \varphi_n)-(f \circ \HH)'(\bar q;\hat q_n) \to \rho \quad \text{in }\llz \quad \text{as }n \to \infty.
 \end{equation*}Since we established above that $(\hat q_n,\hat \varphi_n)= S'(\ll;\dl_n)$, the proof of this step is now complete. 

(II) 
In the following,  $\rho \in \llz$ remains arbitrary, but fixed. 
To prove the desired relations in  \eqref{eq:signcond}, we first make use of the B-stationarity from\  \eqref{eq:vi}. 
Here we test with the function $\dl_n \in \hs$ which was defined  in \eqref{eq:l_hat}. 

We test \eqref{eq:adjoint1_q},  \eqref{eq:adjoint2_q}, and \eqref{eq:grad_q} with $(\delta q_n, \delta \varphi_n) := S'(\ll;\dl_n)$  and $\dl_n$, respectively. 
This leads to 
  \begin{equation}\label{eq:for_ss}
  \begin{aligned}
 & 0 \leq \partial_q j(\bar  q, \bar  \varphi) \dd_n+\partial_{\varphi} j(\bar  q, \bar  \varphi) \df_n+ (\ll, \dl_n)_{H^1(0,T;\lo)} 
 \\&\qquad =-\int_0^T (\dot \xi(t),\dd_n(t))_{\lo} \;dt -\beta (w- \lambda, \dd_n)_{\llz}
 \\&\qquad \quad  +(\HH'(\bar q)^\star (\mu ),\dd_n)_{\llz}+\beta (w- \lambda, \delta \varphi_n)_{\llz} 
 \\&\qquad \quad  +\alpha ( \nabla w  ,\nabla \delta \varphi_n)_{\llz}
 - (w, \dl_n)_{\llz}
\\&\qquad =\int_0^T (\xi(t),\dot \dd_n(t))_{\lo} \;dt -  ( \lambda,-\beta(\dd_n-  \df_n) )_{\llz}
\\&\qquad \quad +(\mu,\HH'(\bar q)(\dd_n))_{\llz} -\dual{ \underbrace{\beta ( \dd_n- \delta \varphi_n)+\alpha  \Delta \df_n +\dl_n}_{=0,\text{ cf.}\ \eqref{eq:ode_lin_q2}}}{w}_{L^2(0,T;\ho)}
\\&\qquad \underbrace{=}_{\eqref{eq:ode_lin_q1}} \int_0^T (\xi(t),\frac{1}{\epsilon}  \maxlim ' (\bar z(t);\rho_n(t))_{\lo} \;dt - \int_0^T ( \lambda(t),\rho_n(t))_{\lo} \;dt
\\&\qquad-\int_0^T (\lambda(t), \F'(\bar q;\dd_n)(t))_{\lo}\,dt+ \int_0^T (\mu(t),\HH'(\bar q)(\dd_n)(t))_{\lo}\,dt
 \quad  \forall\, n \in \N,
  \end{aligned} 
\end{equation}
where the second identity follows from integration by parts, $\dd_n(0)=0$, and $\xi(T)=0$; here we also recall the abbreviation $\rho_n:= -\beta(\delta q_n - \delta \varphi_n)-(f \circ \HH)'(\bar q;\delta q_n)$, see \eqref{eq:rho}.  
In view of \eqref{eq:rho} and since $\dd_n(t)=\frac{1}{\epsilon} \int_0^t   \maxlim ' (\bar z(s);\rho_n(s))\,ds$, letting $n \to \infty$ in \eqref{eq:for_ss} leads to 
 \begin{equation}\label{eq:for_sss}\begin{aligned}
  0 &\leq \int_0^T (\xi(t),\frac{1}{\epsilon}  \maxlim ' (\bar z(t);\rho(t))_{\lo} \;dt - \int_0^T ( \lambda(t),\rho(t))_{\lo} \;dt
\\ \quad & - \int_0^T (\lambda(t), \F'(\bar q;\widehat q_\rho)(t))_{\lo}\,dt+ \int_0^T (\mu(t),\HH'(\bar q)(\widehat q_\rho)(t))_{\lo}\,dt
   \end{aligned}
\end{equation}for all $\rho \in \llz$, where we abbreviate
 \begin{equation}\label{eq:q_rho}
 \widehat q_\rho(t):=\frac{1}{\epsilon} \int_0^t  \maxlim ' (\bar z(s); \rho(s))\;ds \quad \forall\,t \in [0,T].
 \end{equation} 
 Here we used the fact that $\maxlim '(\bar z; \cdot):\llz \to \llz$ is continuous, by the Lipschitz-continuity of $\max$, as well as \eqref{eq:conv} in combination with \eqref{eq:h'} and the fact that $\HH'(\bar q) \in \LL(\llz,\llz)$. 

Next, we take a closer look at the second line in the estimate \eqref{eq:for_sss}. In this context, we first notice  that, for all $v,h \in \R$, it holds 
\begin{equation}\label{eq:d_f}
f'(v;h)=\begin{cases}f'_+(v)h,\quad \text{if }h \geq 0,\\f'_-(v)h,\quad \text{if }h < 0.\end{cases}
\end{equation}
Moreover, we recall that
 \begin{equation}\label{eq:max}
  \maxlim{'}(v;h)
  =\begin{cases}
  h & \text{if } v>0, \\
  \max\{h,0\} &\text{if } v=0,\\
  0 &\text{if } v<0.
  \end{cases}
 \end{equation}
Now, let $\rho \in \llz$ with $\rho \geq 0 \text{ a.e.\ in }(0,T) \times \O$ be arbitrary, but fixed. In view of  \eqref{eq:q_rho} and \eqref{eq:max}, we have $\widehat q_\rho \geq 0 \text{ a.e.\ in }(0,T) \times \O$ and \eqref{eq:d_h} implies
\begin{equation*} \label{for}
\HH'(\bar q)(\widehat q_\rho) \geq 0\quad  \text{ a.e.\ in }(0,T) \times \O.
\end{equation*} Then, by recalling \eqref{eq:f_h_deriv} and by employing Fubini's theorem, we obtain 
 \begin{equation}\label{eq:2t1}\begin{aligned}
- \int_0^T & (\lambda(t), \F'(\bar q;\widehat q_\rho)(t))_{\lo}\,dt+ \int_0^T (\mu(t),\HH'(\bar q)(\widehat q_\rho)(t))_{\lo}\,dt\\&=
\int_0^T \int_\O [-\lambda(t,x) f_+'(\HH(\bar q)(t,x))+\mu(t,x)]\HH'(\bar q)(\widehat q_\rho)(t,x)\,dx\,dt
\\&=
\int_0^T \int_\O \HH'(\bar q)^\star[-\lambda f_+'(\HH(\bar q))+\mu](t,x)\widehat q_\rho(t,x)\,dx\,dt
\\&\underset{\eqref{eq:q_rho}}{=}
 \int_\O \int_0^T \HH'(\bar q)^\star[-\lambda f_+'(\HH(\bar q))+\mu](t,x)\Big(\frac{1}{\epsilon} \int_0^t  \maxlim ' (\bar z(s,x); \rho(s,x))\;ds \Big) \,dt\,dx
 \\&=
 \int_\O \int_0^T \frac{1}{\epsilon}   \maxlim ' (\bar z(t,x); \rho(t,x)) {\Big(  \int_t^T  \HH'(\bar q)^\star[-\lambda f_+'(\HH(\bar q))+\mu](s,x)\;ds\Big)} \,dt\,dx
  \\&=
\int_0^T \int_\O  \frac{1}{\epsilon}   \maxlim ' (\bar z(t,x); \rho(t,x)) {G^+(t,x)} \,dx\,dt
   \quad  \forall\, \rho \in \llz, \rho \geq 0,
   \end{aligned}
\end{equation}where the last equality is due to the definition of $G^+$ in \eqref{eq:g_plus} combined with the second identity in \eqref{eq:kkt2}. 
Going back to \eqref{eq:for_sss}, we have 
\begin{equation}\label{eq:for_sss1}\begin{aligned}
  0 &\leq \int_0^T \int_\O \frac{1}{\epsilon}  \maxlim ' (\bar z(t,x);\rho(t,x))\xi(t,x)-\lambda(t,x)\rho(t,x)\,dx \;dt
\\ \quad & + \int_0^T \int_\O  \frac{1}{\epsilon}   \maxlim ' (\bar z(t,x); \rho(t,x)) G^+(t,x) \,dx\,dt   \quad  \forall\, \rho \in \llz, \rho \geq 0.
   \end{aligned}
\end{equation}By means of the fundamental lemma of calculus of variations in combination with the positive homogeneity of the directional derivative w.r.t.\ direction, we deduce from \eqref{eq:for_sss1} the inequality
\begin{equation}\label{eq:ae1}\begin{aligned}
 \frac{1}{\epsilon}  \maxlim ' (\bar z(t,x);1)\xi(t,x)-\lambda(t,x)+  \frac{1}{\epsilon}   \maxlim ' (\bar z(t,x); 1) G^+(t,x)  \geq 0 \quad \text{a.e.\ in } (0,T) \times \O.
   \end{aligned}
\end{equation}
By arguing exactly in the same way as above, where {one takes into account the fact that $\HH'(\bar q)(\widehat q_\rho) \leq 0 \text{ a.e.\ in }(0,T) \times \O$, for $\rho \leq 0 \text{ a.e.\ in }(0,T) \times \O$,} we show
 \begin{equation}\label{eq:2t}\begin{aligned}
- \int_0^T & (\lambda(t), \F'(\bar q;\widehat q_\rho)(t))_{\lo}\,dt+ \int_0^T (\mu(t),\HH'(\bar q)(\widehat q_\rho)(t))_{\lo}\,dt\\&
=
 \int_0^T  \int_\O  \frac{1}{\epsilon}   \maxlim ' (\bar z(t,x); \rho(t,x)) \underbrace{\Big(  \int_t^T  \HH'(\bar q)^\star[-\lambda f_-'(\HH(\bar q))+\mu](s,x)\;ds\Big)}_{=G^-(t,x)} \,dx \,dt
 \\& \qquad \qquad \qquad   \qquad \qquad \qquad  \qquad\qquad \qquad \qquad    \qquad  \forall\, \rho \in \llz, \rho \leq 0.
   \end{aligned}
\end{equation}This gives in turn 
\begin{equation}\label{eq:ae2}\begin{aligned}
 \frac{1}{\epsilon}  \maxlim ' (\bar z(t,x);-1)\xi(t,x)+\lambda(t,x)+  \frac{1}{\epsilon}   \maxlim ' (\bar z(t,x); -1) G^-(t,x)  \geq 0 \quad \text{a.e.\ in } (0,T) \times \O,
   \end{aligned}
\end{equation}where we relied again on  the fundamental lemma of calculus of variations and  the positive homogeneity of the directional derivative w.r.t.\ direction. From \eqref{eq:ae1}- \eqref{eq:ae2} and the fact that $ \maxlim ' (0;\cdot)=\max\{\cdot, 0\}$ (see \eqref{eq:max}) we can now conclude the first relation in \eqref{eq:signcond}. Finally, the second relation in \eqref{eq:signcond} is a consequence of \eqref{eq:kkt2}, \eqref{eq:ae1}- \eqref{eq:ae2} and \eqref{eq:max}. This completes the proof.
\end{proof}

\begin{corollary}[Strong stationarity in the case that $f$ is smooth]\label{cor}
Suppose that Assumptions \ref{assu:j_ex} and \ref{assu:stand_eps} are fulfilled.
Let $\bar \ell \in \hs$ be locally optimal for \eqref{eq:optt} with associated states  \begin{equation*}
  \bar q \in \hhyn \quad \text{and} \quad \bar \varphi \in \llu. 
 \end{equation*}   If the set $\{(t,x) \in (0,T) \times \O: \HH(\bar q)(t,x)=n_f\}$ has measure zero, then  there exist unique adjoint states
 \begin{equation*}
  \xi \in H^{1}_T(0,T;\lo) \quad \text{and} \quad w \in \llu,
 \end{equation*}  
 and a unique multiplier $\lambda \in L^{\infty}(0,T;\lo)$ such that the following system is satisfied 
 \vspace{-0.5cm}
 \begin{subequations}\label{eq:strongstat}
 \begin{gather}
-\dot{\xi}-\beta \big(w-\lambda\big)+[\F'(\bar q)]^\star (\lambda )=\partial_q j(\bar  q, \bar  \varphi) \ \text{ in }\llz, \quad \xi(T) = 0,\label{eq:adjoint1} \\[1mm]
 -\alpha  \Delta w  + \beta \big(w-\lambda\big)=\partial_\varphi j(\bar  q, \bar  \varphi) \ \ \text{ in } L^2(0,T;H^{1}(\Omega)^\ast), \label{eq:adjoint2}\\[1mm]
 \left. \begin{aligned} 
 & \lambda(t,x)=\frac{1}{\epsilon} \raisebox{3pt}{$\chi$}_{\{ \bar  z>0\}}(t,x) \xi(t,x) \quad \text{a.e.\ where  }\bar z(t,x) \neq 0,\\
  &0 \leq \lambda(t,x) \leq \frac{1}{\epsilon} \xi(t,x)  \quad \text{a.e.\ where  }\bar z(t,x) = 0,\end{aligned}\right\}\label{eq:signcond_q}
\\[1mm] (w,\dl )_{L^2(0,T;\lo)}+(\ll,\dl)_{H^1(0,T;\lo)}=0 \quad 
\forall \dl \in H^1(0,T;\lo),\label{eq:grad}
 \end{gather} 
 \end{subequations}    where we abbreviate $\bar z:=-\beta(\bar q-\bar \varphi)-(f \circ \HH)(\bar q)$. Moreover, \eqref{eq:strongstat} is of strong stationary type, i.e.,  if $\bar \ell \in H^1(0,T;L^2(\O))$ together with its states $(\bar q, \bar \varphi) \in \hhyn \times \llu$, some 
 adjoint states $(\xi,w) \in H^{1}_T(0,T;\lo) \times \llu$, and a multiplier $\lambda \in L^{\infty}(0,T;\lo)$ 
 satisfy the optimality system \eqref{eq:adjoint1}--\eqref{eq:grad}, then it also satisfies the variational inequality  \eqref{eq:vi}.
 \end{corollary}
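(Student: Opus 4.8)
The plan is to prove the two halves of the claim in turn: the existence and uniqueness of the tuple $(\xi,w,\lambda)$ solving \eqref{eq:strongstat}, which I would obtain by specializing Theorem \ref{thm:ss_qsep} to the situation where the kink set is negligible; and the converse implication --- that any $\ll$ admitting states, adjoint states and a multiplier solving \eqref{eq:strongstat} automatically satisfies the B-stationarity inequality \eqref{eq:vi} --- which I would establish by a direct testing argument amounting to running step~(II) in the proof of Theorem \ref{thm:ss_qsep} backwards.

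\emph{Existence and uniqueness.} I would start from the optimality system furnished by Theorem \ref{thm:ss_qsep} (whose remaining hypothesis, Assumption \ref{assu:mon}, is --- by inspection of that proof --- used only to fix the sign of the terms $G^\pm$, which here vanish). Since $\{(t,x):\HH(\bar q)(t,x)=n_f\}$ is a null set, the second identity in \eqref{eq:kkt2} holds a.e.\ in $(0,T)\times\O$, i.e.\ $\mu=f'(\HH(\bar q))\lambda$; combined with the chain rule $\F'(\bar q)(\cdot)=f'(\HH(\bar q))\,\HH'(\bar q)(\cdot)$ (Lemma \ref{lem:f_h}) this gives $\HH'(\bar q)^\star(\mu)=[\F'(\bar q)]^\star(\lambda)$, so that \eqref{eq:adjoint1_q} turns into \eqref{eq:adjoint1}; and since $\raisebox{3pt}{$\chi$}_{\{\HH(\bar q)=n_f\}}=0$ a.e., \eqref{eq:g_plus} gives $G^+\equiv G^-\equiv0$, so the sign conditions \eqref{eq:signcond} reduce to $0\le\lambda\le\frac1\epsilon\xi$ on $\{\bar z=0\}$, i.e.\ to \eqref{eq:signcond_q}. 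For uniqueness I would use the triangular structure of \eqref{eq:strongstat}: subtracting two copies of \eqref{eq:grad} and invoking $\hs\dense\llz$ forces $w$ to be unique; with $w$ at hand, \eqref{eq:adjoint2} determines $\lambda$ uniquely in $L^2(0,T;\lo)$, because its right-hand side, a priori in $L^2(0,T;\hoo)$, must then coincide with $\beta\lambda$ and $L^2(0,T;\lo)$ embeds injectively into $L^2(0,T;\hoo)$; finally, \eqref{eq:adjoint1} becomes a linear backward-in-time ODE for $\xi$ with fully determined right-hand side and terminal value $\xi(T)=0$, hence uniquely solvable in $H^1_T(0,T;\lo)$.

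\emph{Strong stationarity.} Now suppose $\ll\in\hs$ with states $(\bar q,\bar\varphi)$, adjoint states $(\xi,w)$ and multiplier $\lambda$ solves \eqref{eq:adjoint1}--\eqref{eq:grad}; I want \eqref{eq:vi}. Fix $\dl\in\hs$, put $(\dd,\df):=S'(\ll;\dl)$ (Proposition \ref{lem:rd}) and $\rho:=-\beta(\dd-\df)-\F'(\bar q;\dd)$; since $\HH(\bar q)\neq n_f$ a.e., $\F'(\bar q;\dd)=\F'(\bar q)(\dd)$ is linear in $\dd$ and \eqref{eq:ode_lin_q1} reads $\dot\dd=\frac1\epsilon\maxlim '(\bar z;\rho)$, $\dd(0)=0$. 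Testing \eqref{eq:adjoint1} with $\dd$ and \eqref{eq:adjoint2} with $\df$, integrating by parts in time (using $\dd(0)=0$, $\xi(T)=0$), and eliminating the $w$-terms via \eqref{eq:ode_lin_q2} and \eqref{eq:grad} --- exactly the manipulations carried out in \eqref{eq:for_ss} --- I would arrive at
\[ j'(S(\ll))S'(\ll;\dl)+(\ll,\dl)_{H^1(0,T;\lo)}=\frac1\epsilon\int_0^T(\xi(t),\maxlim '(\bar z(t);\rho(t)))_{\lo}\,dt-\int_0^T(\lambda(t),\rho(t))_{\lo}\,dt, \]
so it remains to check that the right-hand side is nonnegative, which I would do pointwise a.e.\ in $(0,T)\times\O$ using \eqref{eq:max} and \eqref{eq:signcond_q}: where $\bar z>0$ the integrand is $\frac1\epsilon\xi\rho-\lambda\rho=0$ since $\lambda=\frac1\epsilon\xi$; where $\bar z<0$ it vanishes since $\maxlim '(\bar z;\rho)=0$ and $\lambda=0$; and where $\bar z=0$ we have $0\le\lambda\le\frac1\epsilon\xi$ (hence $\xi\ge0$) and $\maxlim '(0;\rho)=\max\{\rho,0\}$, so the integrand $\frac1\epsilon\xi\max\{\rho,0\}-\lambda\rho$ is $\ge0$ --- for $\rho\ge0$ because $\frac1\epsilon\xi-\lambda\ge0$, and for $\rho<0$ because $-\lambda\rho\ge0$. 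This yields \eqref{eq:vi}.

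I do not expect a genuine obstacle here: the non-smoothness of $\max$ is handled exactly as in Theorem \ref{thm:ss_qsep}, while the second non-smoothness has been neutralized by the hypothesis on the kink set. The points needing some care are the bookkeeping of the adjoint chain rule $[\F'(\bar q)]^\star(\lambda)=\HH'(\bar q)^\star(f'(\HH(\bar q))\lambda)$ together with the vanishing of $G^\pm$ in the existence part, and, for the converse, verifying that the testing steps and the time-integration by parts are legitimate for $(\dd,\df)\in\hhyn\times\llu$ and $(\xi,w)\in H^1_T(0,T;\lo)\times\llu$.
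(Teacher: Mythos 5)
Your proof is correct and follows essentially the paper's own route: the first part by specialization of Theorem \ref{thm:ss_qsep} (using that $G^\pm\equiv 0$ and $\mu=f'(\HH(\bar q))\lambda$ a.e.\ when the kink set is null, so that $\HH'(\bar q)^\star(\mu)=[\F'(\bar q)]^\star(\lambda)$), the second by testing the adjoint system with $(\dd,\df)=S'(\ll;\dl)$ and reducing to a pointwise sign check via \eqref{eq:max} and \eqref{eq:signcond_q}. Your pointwise check is the same as the paper's, just without the $\rho=\rho^++\rho^-$ bookkeeping the paper uses to package it.

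Two remarks. First, a small inaccuracy: you write that Assumption \ref{assu:mon} is the \emph{remaining} hypothesis of Theorem \ref{thm:ss_qsep} not postulated by the corollary, but \emph{two} of the theorem's hypotheses are dropped here, namely Assumptions \ref{assu:f_eps} and \ref{assu:mon}. You deal with \ref{assu:mon} (correctly, it only fixes the sign of $G^\pm$, which vanish); you omit \ref{assu:f_eps}. The paper disposes of it with a one-line observation: since $\{\HH(\bar q)=n_f\}$ is a null set, $f$'s kink is never active along the optimal trajectory, so $f$ need not be smoothened in the regularization step of Proposition \ref{lem:reg}. Your argument should include some version of this remark to be complete. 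Second, you go beyond the paper in a useful way by actually proving the claimed uniqueness of $(\xi,w,\lambda)$, exploiting the triangular structure \eqref{eq:grad}$\to$\eqref{eq:adjoint2}$\to$\eqref{eq:adjoint1}; the paper asserts uniqueness in the statement but does not argue it in the proof, so this is a genuine (if elementary) addition and the argument you sketch is sound.
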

 \begin{proof}
 The first statement is a consequence of Theorem \ref{thm:ss_qsep}. Note that here we do not ask that Assumption   \ref{assu:f_eps} holds true; $f$ does not need to be smoothened, as its non-smoothness is never active. Assumption   \ref{assu:mon} is also not required here; this was necessary in the proof of Theorem \ref{thm:ss_qsep} only to show \eqref{eq:2t1} and \eqref{eq:2t}. Since $\{(t,x) \in (0,T) \times \O: \HH(\bar q)(t,x)=n_f\}$ has measure zero, \eqref{eq:2t1} and \eqref{eq:2t} follow immediately from the second relation in \eqref{eq:kkt2}.

  To prove the second assertion, we  let $\rho \in \llz$ be arbitrary, but fixed and abbreviate $\rho^+:=\max\{\rho,0\}$ and $\rho^-:=\min\{\rho,0\}$.
By distinguishing between the sets $\{(t,x) \in (0,T) \times  \O: \bar z(t,x) >0 \}$, $\{(t,x) \in (0,T) \times  \O: \bar z(t,x) =0 \}$ and $\{(t,x) \in (0,T) \times  \O: \bar z(t,x) <0 \}$,  we obtain from \eqref{eq:signcond_q} and \eqref{eq:max}
\begin{equation}\label{eq:add_rho}\begin{aligned}
  0 &\quad \leq \int_0^T \int_\O \frac{1}{\epsilon}  [\maxlim ' (\bar z(t,x);\rho^+(t,x))+\maxlim ' (\bar z(t,x);\rho^-(t,x))]\xi(t,x) \,dx \;dt
  \\&\qquad- \int_0^T \int_\O \lambda(t,x)[\rho^+(t,x)+\rho^-(t,x)]\,dx \;dt
   \\&{=} \int_0^T \int_\O \frac{1}{\epsilon}  \maxlim ' (\bar z(t,x);\rho(t,x))\xi(t,x) \,dx \;dt
- \int_0^T \int_\O \lambda(t,x)\rho(t,x)\,dx \;dt 
  \end{aligned}
\end{equation} 
for all $\rho \in \llz$. Now, let $\dl \in \hs$ be arbitrary but fixed and 
 test \eqref{eq:adjoint1},  \eqref{eq:adjoint2}, and \eqref{eq:grad} with $(\delta q, \delta \varphi) := S'(\ll;\dl)$  and $\dl$, respectively. 
This leads to 
  \begin{equation*}
  \begin{aligned}
& \partial_q j(\bar  q, \bar  \varphi) \dd+\partial_{\varphi} j(\bar  q, \bar  \varphi) \df+ (\ll, \dl)_{H^1(0,T;\lo)} 
 \\&=-\int_0^T (\dot \xi(t),\dd(t))_{\lo} \;dt -\beta (w- \lambda, \dd)_{\llz}+([\F'(\bar q)]^\star (\lambda ),\dd)_{\llz}
 \\&\quad   +\beta (w- \lambda, \delta \varphi)_{\llz} +\alpha ( \nabla w  ,\nabla \delta \varphi)_{\llz}
 - (w, \dl)_{\llz}
\\&=\int_0^T (\xi(t),\dot \dd(t))_{\lo} \;dt -  ( \lambda,-\beta(\dd-  \df) )_{\llz}+(\lambda,\F'(\bar q)(\dd))_{\llz}
\\& -\dual{ \underbrace{\beta ( \dd- \delta \varphi)+\alpha  \Delta \df +\dl}_{=0,\text{ cf.}\ \eqref{eq:ode_lin_q2}}}{w}_{L^2(0,T;\ho)}
\\& \underbrace{=}_{\eqref{eq:ode_lin_q1}} \int_0^T (\xi(t),\frac{1}{\epsilon}  \maxlim ' (\bar z(t);(-\beta(\delta q - \delta \varphi)-(f \circ \HH)'(\bar q;\delta q))(t))_{\lo} \;dt 
\\&\qquad \qquad - \int_0^T ( \lambda(t),(-\beta(\delta q - \delta \varphi)-(f \circ \HH)'(\bar q;\delta q))(t))_{\lo} \;dt
\\& \underset{\eqref{eq:add_rho}}{\geq} 0,
  \end{aligned} 
\end{equation*}
where the second identity follows from integration by parts, $\dd(0)=0$, and $\xi(T)=0$. Since  
 $\dl \in \hs$ was arbitrary, the proof is now complete.
 \end{proof}
 
{\begin{remark} 
 We remark that if fatigue is not taken into consideration, i.e., if $f$ is replaced by a nonnegative constant, then \eqref{eq:strongstat} reduces to the strong stationary optimality conditions  obtained in \cite[Thm.\,4.5]{st_coup}; note that therein the control space is $\llz$ instead of $\hs.$
 \end{remark}}
\begin{remark}
As opposed to \eqref{eq:strongstat}, the optimality system in Theorem \ref{thm:ss_qsep} is not strong stationary, as we will see in the next section. However, we emphasize that \eqref{eq:strongstat_q} is a comparatively strong optimality system. While countless non-smooth problems have been addressed by resorting to a smoothening procedure as the one in the proof of Proposition \ref{lem:reg} (see e.g.\, \cite{ Barbu:1981:NCD,  Friedman1987, He1987} and the references therein), we went a step further and improved the optimality conditions from Proposition \ref{lem:reg} by proving  the additional information contained in \eqref{eq:signcond}. Let us point out  that  sign conditions on the sets where the non-smoothness is active, in our case
 \begin{align*} 
0 \leq \lambda(t,x)  \quad \text{a.e.\ where  }\bar z(t,x) = 0
  \end{align*}
are not expected to be obtained by classical regularization techniques, see e.g. \cite[Remark 3.9]{mcrf}.

\end{remark}

\subsection{Discussion of the  optimality system \eqref{eq:strongstat_q}. Comparison to strong stationarity}\label{sec:comp}

We begin this section by writing down how the strong stationary optimality conditions for the control of \eqref{eq:optt} should look like.

\begin{proposition}[An optimality system that implies B-stationarity]\label{prop:ss}
Suppose that Assumptions \ref{assu:j_ex} is  fulfilled.
Assume that $\bar \ell \in \hs$ together with its states $(\bar q, \bar \varphi) \in \hhyn \times \llu$, some 
 adjoint states $(\xi,w) \in H^{1}_T(0,T;\lo) \times \llu$, and some  multipliers $\lambda, \mu \in L^{\infty}(0,T;\lo)$ 
 satisfy the optimality system
 \begin{subequations}\label{eq:strongstat2}
 \begin{gather}
-\dot{\xi}-\beta \big(w-\lambda\big)+\HH'(\bar q)^\star (\mu)=\partial_q j(\bar  q, \bar  \varphi) \ \text{ in }\llz,\quad \xi(T)=0,\label{eq:adjoint1_q2}
\\-\alpha  \Delta w  + \beta \big(w-\lambda\big)=\partial_\varphi j(\bar  q, \bar  \varphi) \ \text{ in }L^2(0,T;\hoo),\label{eq:adjoint2_q2}
 \\\left. \begin{aligned} 
  \lambda(t,x)&=\frac{1}{\epsilon} \raisebox{3pt}{$\chi$}_{\{ \bar  z>0\}}(t,x) \xi(t,x) \quad \text{a.e.\ where  }\bar z(t,x) \neq 0,\\
  \mu(t,x) &=f'(\HH(\bar q)(t,x)) \lambda(t,x) \quad \text{a.e.\ where  }\HH(\bar q)(t,x) \neq n_f,\end{aligned}\right\}\label{eq:kkt22}
 \\\left. \begin{aligned} 
 0 \leq \lambda(t,x)&\leq \frac{1}{\epsilon} \xi(t,x)  \quad \text{a.e.\ where  }\bar z(t,x) = 0,\\
   f_+'(n_f) \lambda(t,x)  \leq \mu(t,x)&\leq  f_-'(n_f) \lambda(t,x)  \quad \text{a.e.\ where  }\HH(\bar q)(t,x) = n_f,
  \end{aligned}\right\}\label{eq:signcond2}
\\[1mm] (w,\dl )_{L^2(0,T;\lo)}+(\ll,\dl)_{H^1(0,T;\lo)}=0 \quad 
\forall \dl \in H^1(0,T;\lo),\label{eq:grad_q2}
 \end{gather} 
 \end{subequations}    where we abbreviate $\bar z:=-\beta(\bar q-\bar \varphi)-(f \circ \HH)(\bar q)$ and where, for any $v\in \R$, the right- and left-sided derivative of $f: \R \to \R$  are given by 
  $f'_+(v) := f'(v;1)$ and $ f'_-(v) := - f'(v;-1)$,  respectively. Then, $\ll$ also satisfies the variational inequality \eqref{eq:vi}.\end{proposition}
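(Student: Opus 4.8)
The plan is to follow the second half of the proof of Corollary \ref{cor}. Fix an arbitrary direction $\dl \in \hs$, set $(\dd,\df):=S'(\ll;\dl)$ — which, by Proposition \ref{lem:rd}, solves the linearized system \eqref{eq:ode_lin_q} associated to $\ll$ with right-hand side $\dl$ — and abbreviate $\rho:=-\beta(\dd-\df)-\F'(\bar q;\dd)\in\llz$, i.e.\ the argument of $\maxlim ' (\bar z;\cdot)$ in \eqref{eq:ode_lin_q1}. Testing \eqref{eq:adjoint1_q2} and \eqref{eq:adjoint2_q2} with $(\dd,\df)$ and \eqref{eq:grad_q2} with $\dl$, integrating by parts in time (using $\dd(0)=0$ and $\xi(T)=0$), cancelling the $w$-contributions by means of the linearized elliptic equation \eqref{eq:ode_lin_q2}, and finally substituting $\dot\dd=\tfrac1\epsilon\maxlim ' (\bar z;\rho)$ from \eqref{eq:ode_lin_q1} together with \eqref{eq:f_h_deriv}, one arrives — by exactly the chain of equalities leading to \eqref{eq:for_ss} — at the identity
\begin{align*}
\partial_q j(\bar q,\bar\varphi)\dd+\partial_\varphi j(\bar q,\bar\varphi)\df+(\ll,\dl)_{H^1(0,T;\lo)}
&=\underbrace{\int_0^T\Big[\tfrac1\epsilon\big(\maxlim ' (\bar z(t);\rho(t)),\xi(t)\big)_{\lo}-\big(\lambda(t),\rho(t)\big)_{\lo}\Big]dt}_{=:(I)}\\
&\quad+\underbrace{\int_0^T\Big[\big(\mu(t),\HH'(\bar q)(\dd)(t)\big)_{\lo}-\big(\lambda(t),\F'(\bar q;\dd)(t)\big)_{\lo}\Big]dt}_{=:(II)}.
\end{align*}
Since the left-hand side is precisely $j'(S(\ll))S'(\ll;\dl)+(\ll,\dl)_{H^1(0,T;\lo)}$ and $\dl$ is arbitrary, it suffices to prove $(I)\ge 0$ and $(II)\ge 0$.

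For $(I)$, I would argue pointwise a.e.\ in $(0,T)\times\O$, distinguishing the sign of $\bar z$ via \eqref{eq:max}. On $\{\bar z>0\}$ the integrand vanishes since $\maxlim ' (\bar z;\rho)=\rho$ and $\lambda=\tfrac1\epsilon\xi$ by the first line of \eqref{eq:kkt22}; on $\{\bar z<0\}$ it vanishes since $\maxlim ' (\bar z;\rho)=0$ and $\lambda=0$; on $\{\bar z=0\}$ one has $\maxlim ' (\bar z;\rho)=\rho^+$, and writing $\rho=\rho^++\rho^-$ with $\rho^+:=\max\{\rho,0\}$ and $\rho^-:=\min\{\rho,0\}$, the integrand equals $\rho^+\big(\tfrac1\epsilon\xi-\lambda\big)-\lambda\rho^-$, which is nonnegative because $0\le\lambda\le\tfrac1\epsilon\xi$ by the first line of \eqref{eq:signcond2}. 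Hence $(I)\ge 0$.

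For $(II)$, I would again work pointwise and split according to whether $\HH(\bar q)(t,x)\ne n_f$ or $=n_f$. On $\{\HH(\bar q)\ne n_f\}$ the function $f$ is differentiable at $\HH(\bar q)$ (its only non-smooth point being $n_f$), so $\F'(\bar q;\dd)=f'(\HH(\bar q);\HH'(\bar q)(\dd))=f'(\HH(\bar q))\,\HH'(\bar q)(\dd)$, and combining this with $\mu=f'(\HH(\bar q))\lambda$ from \eqref{eq:kkt22} shows the integrand of $(II)$ is identically zero there. On $\{\HH(\bar q)=n_f\}$, abbreviating $h:=\HH'(\bar q)(\dd)$ and using \eqref{eq:d_f}, the integrand equals $h\big(\mu-\lambda f'_+(n_f)\big)$ if $h\ge 0$ and $h\big(\mu-\lambda f'_-(n_f)\big)$ if $h< 0$; in either case it is nonnegative by the two-sided bound $f'_+(n_f)\lambda\le\mu\le f'_-(n_f)\lambda$ from the second line of \eqref{eq:signcond2}. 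Thus $(II)\ge 0$, and \eqref{eq:vi} follows.

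The main difficulty is bookkeeping rather than conceptual: carrying out the integration-by-parts/cancellation step that yields the displayed identity \emph{from scratch}, without presupposing B-stationarity as was available in \eqref{eq:for_ss}; and, in term $(II)$, recognizing that on the complement of the kink set the directional derivative $f'(\HH(\bar q);\cdot)$ degenerates to multiplication by $f'(\HH(\bar q))$, so that the $\lambda$- and $\mu$-contributions cancel exactly, while on the kink set the correct sign of $h\,(\mu-\lambda f'_\pm(n_f))$ hinges on matching the case $h\ge 0$ versus $h<0$ with the appropriate one-sided derivative in \eqref{eq:signcond2}. One also has to check that $(\dd,\df)$ is an admissible test tuple for \eqref{eq:strongstat2}, which it is, since $\dd\in\hhyn$, $\xi\in H^1_T(0,T;\lo)$, $\HH'(\bar q)\in\LL(\llz,\llz)$, and $\F'(\bar q;\cdot):\llz\to\llz$ by Lemma \ref{lem:f_h}.
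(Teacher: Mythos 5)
Your proof is correct and follows essentially the same route as the paper's: both decompose the tested optimality system into the $\max$-part $(I)$ and the $f\circ\HH$-part $(II)$, and both verify the nonnegativity of each by pointwise case distinctions (sign of $\bar z$ for $(I)$; sign of $\HH'(\bar q)(\dd)$ combined with \eqref{eq:d_f} for $(II)$). The paper merely organizes the same computation in the opposite order — first establishing the inequality $0\le(I)+(II)$ for arbitrary $\rho\in\llz$ as \eqref{eq:for_sss} and then specializing $\rho$ and "arguing backwards" through \eqref{eq:for_ss} — whereas you fix $\dl$ from the start, derive the identity directly, and then bound the two terms; the two presentations are equivalent.
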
 
  \begin{proof}
Let $\rho \in \llz$ be arbitrary, but fixed. In the proof of Corollary \ref{cor} we saw that the first identity in \eqref{eq:kkt22} and the first relation in \eqref{eq:signcond2} combined with \eqref{eq:max} imply 
\begin{equation}\label{eq:add_rho1}\begin{aligned}
  0 \leq  \int_0^T \int_\O \frac{1}{\epsilon}  \maxlim ' (\bar z(t,x);\rho(t,x))\xi(t,x) \,dx \;dt
- \int_0^T \int_\O \lambda(t,x)\rho(t,x)\,dx \;dt
  \end{aligned}
\end{equation}for all $\rho \in \llz.$
Next we abbreviate $\HH'(\bar q)(\widehat q_{\rho})^-:=\min\{\HH'(\bar q)(\widehat q_{\rho}),0\}$ and $\HH'(\bar q)(\widehat q_{\rho})^+:=\max\{\HH'(\bar q)(\widehat q_{\rho}),0\}$, where 
 \begin{equation*}
 \widehat q_\rho(t):=\frac{1}{\epsilon} \int_0^t  \maxlim ' (\bar z(s); \rho(s))\;ds \quad \forall\,t \in [0,T].
 \end{equation*}
 From the second identity in \eqref{eq:kkt22} and the second relation in \eqref{eq:signcond2} we deduce  that 
\begin{equation}\label{eq:add_rho2}\begin{aligned}
  0 & \leq \int_0^T \int_\O \underbrace{[-\lambda(t,x) f_+'(\HH(\bar q)(t,x))+\mu(t,x)]}_{\geq 0}\HH'(\bar q)(\widehat q_\rho)^+(t,x)\,dx\,dt
  \\&\qquad +\int_0^T \int_\O \underbrace{[-\lambda(t,x) f_-'(\HH(\bar q)(t,x))+\mu(t,x)]}_{\leq 0}\HH'(\bar q)(\widehat q_\rho)^-(t,x)\,dx\,dt
 \\ &=\int_0^T \int_\O -\lambda(t,x) f'(\HH(\bar q)(t,x);\HH'(\bar q)(\widehat q_\rho)^+(t,x))+\mu(t,x)\HH'(\bar q)(\widehat q_\rho)^+(t,x)  \,dx\,dt
  \\&\qquad +\int_0^T \int_\O-\lambda(t,x) f'(\HH(\bar q)(t,x);\HH'(\bar q)(\widehat q_\rho)^-(t,x))+\mu(t,x)\HH'(\bar q)(\widehat q_\rho)^-(t,x)\,dx\,dt
 \\&= - \int_0^T  (\lambda(t), \F'(\bar q;\widehat q_\rho)(t))_{\lo}\,dt+ \int_0^T (\mu(t),\HH'(\bar q)(\widehat q_\rho)(t))_{\lo}\,dt,
  \end{aligned}\end{equation}
where in the second identity  we relied on \eqref{eq:d_f}.
Adding \eqref{eq:add_rho1} and \eqref{eq:add_rho2} yields \eqref{eq:for_sss}. Now, let $\dl \in \hs$ be arbitrary but fixed and abbreviate $(\delta q, \delta \varphi) := S'(\ll;\dl)$. By  testing \eqref{eq:for_sss} with $-\beta(\delta q-\delta \varphi)-\F'(\bar q;\delta q)$ and by arguing step by step backwards as in the proof of \eqref{eq:for_ss}, we finally arrive at the desired result. 
\end{proof}

\begin{remark}
Some words concerning  Proposition \ref{prop:ss} are in order:
\begin{itemize}
\item The optimality system \eqref{eq:strongstat2} differs from \eqref{eq:strongstat_q} only regarding the relations in \eqref{eq:signcond2} and \eqref{eq:signcond}. 
As expected, the optimality conditions in \eqref{eq:signcond2} contain more information than \eqref{eq:signcond}. This is also confirmed by Proposition \ref{prop:ss2} below.
\item We point out that \eqref{eq:strongstat2} is not of strong stationary type, as we were not able to show  $\eqref{eq:vi}\Rightarrow \eqref{eq:strongstat2}$; the optimality conditions in \eqref{eq:strongstat2} just point out the information that   is missing in \eqref{eq:strongstat_q}, namely 
\begin{equation}\label{mis} \begin{aligned} 
\lambda(t,x)&\leq \frac{1}{\epsilon} \xi(t,x)  \quad \text{a.e.\ where  }\bar z(t,x) = 0,\\
   f_+'(n_f) \lambda(t,x)  \leq \mu(t,x)&\leq  f_-'(n_f) \lambda(t,x)  \quad \text{a.e.\ where  }\HH(\bar q)(t,x) = n_f.
  \end{aligned}\end{equation}
Note that the sign condition 
 \begin{align*} 
0 \leq \lambda(t,x)  \quad \text{a.e.\ where  }\bar z(t,x) = 0
  \end{align*} is already contained in \eqref{eq:signcond}. The proof of Proposition \ref{prop:ss} shows that \eqref{mis} is indeed needed for the  implication $\eqref{eq:strongstat_q} \Rightarrow \eqref{eq:vi}$.
  \item  In order to prove that a certain optimality system implies B-stationarity, it is essential that it includes sign conditions for the involved multipliers and/or adjoint states  on the sets where the non-smoothness is active. This fact has been observed in many contributuions dealing with strong stationarity \cite[Rem.\ 6.9]{paper}, \cite[Rem.\ 3.9]{mcrf}, \cite[Rem.\ 4.8]{st_coup}, \cite[Rem.\ 4.15]{cr_meyer}. In our case, see \eqref{eq:signcond}, the information on $\{\bar z=0\}$ is incomplete, while the sign conditions on the set $\{\HH(\bar q) = n_f\}$ are  non-existent and seem to be hidden in the integral formulations \eqref{eq:g_plus}.
\end{itemize}
\end{remark}

\begin{proposition}[The optimality system \eqref{eq:strongstat2} is stronger than \eqref{eq:strongstat_q}]\label{prop:ss2}
Suppose that all the hypotheses in Proposition \ref{prop:ss} are  fulfilled. If, in addition, Assumption \ref{assu:mon} holds true, then \eqref{eq:strongstat_q} is satisfied.
\end{proposition}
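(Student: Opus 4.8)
The plan is as follows. The two optimality systems \eqref{eq:strongstat_q} and \eqref{eq:strongstat2} coincide in everything except the relations on the active sets: the adjoint equations \eqref{eq:adjoint1_q}--\eqref{eq:adjoint2_q}, the kink-free identities \eqref{eq:kkt2}, and the gradient equation \eqref{eq:grad_q} are literally \eqref{eq:adjoint1_q2}--\eqref{eq:adjoint2_q2}, \eqref{eq:kkt22} and \eqref{eq:grad_q2}, and these hold for the given tuple $(\xi, w, \lambda, \mu)$ by assumption. Hence it only remains to deduce \eqref{eq:signcond} from \eqref{eq:signcond2} (keeping the same adjoint states and multipliers), and the whole point will be to show that, under Assumption \ref{assu:mon}, the functions $G^+$ and $G^-$ from \eqref{eq:g_plus} satisfy $G^+ \geq 0$ and $G^- \leq 0$ a.e.\ in $(0,T) \times \O$.

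First I would observe that Assumption \ref{assu:mon}, via its consequence \eqref{eq:d_h}, means that $\HH'(\bar q) \in \LL(\llz, \llz)$ is a positive operator, i.e.\ it maps a.e.\ nonnegative functions to a.e.\ nonnegative functions. Then I would transfer this to the adjoint by a standard duality argument: for $g \in \llz$ with $g \geq 0$ a.e.\ and any $\psi \in \llz$ with $\psi \geq 0$ a.e.\ one has $(\HH'(\bar q)^\star g, \psi)_{\llz} = (g, \HH'(\bar q)\psi)_{\llz} \geq 0$; testing with $\psi = v\rho$ for arbitrary nonnegative $v \in \lo$ and $\rho \in L^2(0,T)$ and applying the fundamental lemma of the calculus of variations (exactly as in the proof of \eqref{eq:h_adj}) yields $\HH'(\bar q)^\star g \geq 0$ a.e.\ in $(0,T) \times \O$.

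Next I would read off from the second relation in \eqref{eq:signcond2} that
\[
\raisebox{3pt}{$\chi$}_{\{\HH(\bar q) = n_f\}}\big(-\lambda f_+'(n_f) + \mu\big) \geq 0
\quad\text{and}\quad
\raisebox{3pt}{$\chi$}_{\{\HH(\bar q) = n_f\}}\big(-\lambda f_-'(n_f) + \mu\big) \leq 0
\]
a.e.\ in $(0,T) \times \O$. Applying the positivity of $\HH'(\bar q)^\star$ established above, the integrands appearing in \eqref{eq:g_plus} are a.e.\ nonnegative, respectively a.e.\ nonpositive; integrating over $[t,T]$ and using Fubini to handle the pointwise-in-$(t,x)$ definition then gives $G^+ \geq 0$ and $G^- \leq 0$ a.e.\ in $(0,T) \times \O$. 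From here \eqref{eq:signcond} is immediate from \eqref{eq:signcond2}: on $\{\bar z > 0\}$ the chain $G^- \leq 0 \leq G^+$ is precisely the second line of \eqref{eq:signcond}, while on $\{\bar z = 0\}$ the bound $0 \leq \lambda \leq \frac{1}{\epsilon}\xi$ from \eqref{eq:signcond2} together with $G^+ \geq 0$ gives $0 \leq \lambda \leq \frac{1}{\epsilon}(\xi + G^+)$, i.e.\ the first line of \eqref{eq:signcond}.

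Everything here is routine once the positivity of the adjoint is in place; the one step I expect to require genuine care — and hence the main obstacle — is precisely that transfer of monotonicity from $\HH'(\bar q)$ to $\HH'(\bar q)^\star$, since it is the adjoint, and not $\HH'(\bar q)$ itself, that enters the definition of $G^\pm$ in \eqref{eq:g_plus}; the test-function/duality trick of the second step is what makes that work.
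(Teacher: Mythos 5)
Your proof is correct and follows essentially the same route as the paper: reduce to showing $\eqref{eq:signcond2}\Rightarrow\eqref{eq:signcond}$, transfer the positivity of $\HH'(\bar q)$ (from \eqref{eq:d_h}) to its adjoint $\HH'(\bar q)^\star$ by a duality argument, and then read $G^+\geq 0$, $G^-\leq 0$ directly off the definitions in \eqref{eq:g_plus}. The only cosmetic difference is that you invoke the product-test-function trick and the fundamental lemma to get the pointwise a.e.\ sign of $\HH'(\bar q)^\star g$, whereas the paper (equivalently) phrases the same duality computation as monotonicity $\eta_1\geq\eta_2\Rightarrow\HH'(\bar q)^\star\eta_1\geq\HH'(\bar q)^\star\eta_2$ and leaves the standard passage to a.e.\ positivity implicit; both are fine.
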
 
  \begin{proof}We only need to show that \eqref{eq:signcond2} implies \eqref{eq:signcond}. To this end, we first prove that 
\begin{equation}\label{eq:h2}
\HH'(\bar q)^\star(\eta_1) \geq \HH'(\bar q)^\star(\eta_2) \quad \forall\,\eta_1,\eta_2 \in \llz \text{ with }\eta_1 \geq \eta_2.
\end{equation}
We recall that, as a consequence of  Assumption \ref{assu:mon}, $\HH'(\bar q)(\rho) \geq 0$ for all $\rho \in \llz, \rho \geq 0$, cf.\,\eqref{eq:d_h}. This leads to
\begin{align*}
(\HH'(\bar q)^\star(\eta_1),\rho)_{\llz}&=(\eta_1,\HH'(\bar q)(\rho))_{\llz}
\\&\quad  \geq (\eta_2,\HH'(\bar q)(\rho))_{\llz}
=(\HH'(\bar q)^\star(\eta_2),\rho)_{\llz},
\end{align*}from which \eqref{eq:h2} follows.
Now, the second relation in \eqref{eq:signcond2} and the definitions of $G^+$ and $G^-$ in \eqref{eq:g_plus} give in turn
$$G^+ \geq 0 \text{ and } G^- \leq 0 \quad \text{a.e.\ in }(0,T) \times \O.$$
Thus, \eqref{eq:signcond2} implies \eqref{eq:signcond} and the proof is complete.
\end{proof}

\begin{remark} \label{exp}
The gap between \eqref{eq:strongstat_q} and the  strong stationary  optimality conditions \eqref{eq:strongstat2} is  due to the additional non-smooth mapping $f$ appearing in the argument of the initial non-smoothness $\max$, cf.\,\eqref{eq:ode}. To see this, let us take a closer look at the proof of Theorem \ref{thm:ss_qsep}. Therein, \eqref{eq:signcond} is proven by relying on direct methods from previous works \cite{st_coup, paper} which deal with strong stationarity in the context of one non-differentiable map. In these findings it has been observed that  the set of directions into which the non-smoothness  is differentiated - in the "linearized" state equation - must be dense in a suitable (Bochner) space \cite[Remark 2.12]{st_coup}, \cite[Lem.\,5.2]{paper}.
The density of the set of directions into which $\max$ is differentiated, see \eqref{eq:ode_lin_q1}, is indeed available, as the first step of the proof of Theorem \ref{thm:ss_qsep} shows. This allowed us to improve the optimality system \eqref{eq:adjjj} from the previous section. However, the  non-differentiable function $f$ requires a similar density property too, which reads as follows
 \begin{equation}\label{hh}
 \{  \HH'(\bar q;S_1'(\ll;\dl)):\dl \in H^1(0,T;\lo)\} \dense \llz,
\end{equation}where $S_1$ denotes the first component of the control-to-state operator $S: \llU \ni \ell \mapsto (q,\varphi) \in   \hhyn \times \llu$.  By taking a look at the "linearized" state equation \eqref{eq:ode_lin_q1}, we see that \eqref{hh} is not to be expected, due to the lack of surjectivity of the mapping $\max'(\bar z;\cdot)$. Thus, the methods from \cite{st_coup, paper} are restricted to one non-smoothness and permit us to improve the limit optimality system \eqref{eq:adjjj} only up to a certain point. Thus, the  strong stationarity for  the control of \eqref{eq:optt} remains an open question.
\end{remark}

\begin{appendix}
\section{}\label{sec:a}

\begin{proofof}{Lemma \ref{lem:loc}}

The arguments  are well-known \cite{barbu}  and can be found  in \cite[App.\,B]{st_coup} for the case that $ \F(q)$ is constant and the control space is $\llz$ instead of $\hs$. 

(I) Let $\varepsilon>0$ be arbitrary, but fixed. 
We begin by recalling the smooth state equation appearing in \eqref{eq:p_eps}:
\begin{subequations}\label{eq:syst_diff_e}
\begin{gather}
\dot q(t)   =   \frac{1}{\epsilon} \maxeps (-\beta(q(t)-\varphi(t))-(f_\eps \circ \HH)(q)(t))\ \text{ in }\lo, \quad q(0) = 0,    \label{eq:syst_diff11_e} 
\\- \alpha \Delta \varphi(t) + \beta \,\varphi(t)   
= \beta q(t) +\ell(t) \quad  \text{in }H^{1}(\Omega)^\ast, \quad \text{a.e.\ in }(0,T).\label{eq:syst_diff12_e} 
\end{gather}
\end{subequations}
  By employing the exact same arguments as in the proof of Proposition \ref{lem:ode}, one infers that \eqref{eq:syst_diff_e} admits a unique solution $(q_\eps,\varphi_\eps) \in   \hhyn \times \llu $ for every  $\ell \in \llU$, which allows us to define the regularized solution mapping $$S_\varepsilon:\llU \ni \ell \mapsto (q_\eps,\varphi_\eps) \in   \hhyn \times \llu.$$ The operator $S_{\varepsilon}$ is G\^{a}teaux-differentiable and its derivative
 at $\ell  \in \llU$ in direction $\dl \in \llU$, i.e., $ (\dd,\df):=S'_{\varepsilon}(\ell)(\dl)$, is the unique solution of   \small
 \begin{equation}\label{eq:ode_lin_q_e}\begin{aligned}
\dot \dd(t)&=  \frac{1}{\epsilon} \maxeps ' (z_\eps(t))\big( -\beta(\dd(t)-\df(t))-(f_\eps \circ \HH)'(q_\eps)(\dd)(t)\big)\ \text{ in }\lo, \quad \dd(0)=0, \\
&- \alpha \Delta \df(t) + \beta \,\df(t) = \beta \dd(t) +\dl(t) \quad  \text{in }H^{1}(\Omega)^\ast, \quad \text{a.e.\ in }(0,T),
\end{aligned}
 \end{equation}\normalsize  where we abbreviate $ z_\eps:=-\beta(q_\eps-\varphi_\eps)-(f_\eps \circ \HH)(q_\eps)$. By arguing as in the proof of  Lemma \ref{lem:lip_S} we deduce that $S_\varepsilon: \llU  \to   \hhyn \times \llu$ is Lipschitz continuous (with constant independent of $\varepsilon$). Moreover, we have the convergence
 \small
  \begin{equation}\label{s_conv}
  S_\varepsilon(\ell_{\varepsilon}) \to S(\ell)   \quad \text{in } \hhyn \times \llu,
  \end{equation}\normalsize for $\ell_{\varepsilon} \to \ell \text{ in } \llU$.
  To see this, one first shows that $S_\varepsilon(\ell) \to S(\ell)$, which follows by estimating as in the proof of Lemma \ref{lem:lip_S} and by using \eqref{eq:f_h} applied for $f_\eps$ along with \eqref{conv_f_eps}. Then, \eqref{s_conv} is a consequence of the  Lipschitz continuity of $S_\varepsilon$ (with constant independent of $\eps$).

\vspace{-0.19cm}(II)  Next, we focus on proving that $\ll$ can be approximated via local minimizers of optimal control problems governed by \eqref{eq:syst_diff_e}. To this end, let $B_{H^1(0,T;\lo )}(\ll,  \rho)$ be the ball of local optimality of $\ll$ and consider the smooth (reduced) optimal control problem
\begin{equation}\tag{$P_{\eps}^\rho$}\label{eq:q_eps}
 \left.
 \begin{aligned}
  \min_{\ell \in H^1(0,T;\lo)} \quad & J(S_{\eps}(\ell),\ell)+\frac{1}{2}\|\ell-\ll\|_{H^1(0,T;\lo)}^2
  \\
 \text{s.t.} \quad & \ell \in B_{H^1(0,T;\lo)}(\ll,  \rho).
 \end{aligned}
 \quad \right\}
\end{equation}  
By arguing as in the proof of Proposition \ref{prop:ex},
we see that \eqref{eq:q_eps} admits a global solution $\ell_{\eps} \in H^1(0,T;\lo)$.  Since $\ell_{\eps}  \in B_{H^1(0,T;\lo)}(\ll,  \rho),$ we can select a subsequence with 
\begin{equation}\label{l}
\ell_{\eps}  \weakly \widetilde \ell \quad \text{in }H^1(0,T;\lo),
\end{equation}
where $\widetilde \ell  \in B_{H^1(0,T;\lo)}(\ll,  \rho).$
For simplicity, we abbreviate in the following \small  \begin{subequations} \begin{gather}
\JJ(\ell):=J(S(\ell),\ell) ,\label{jj}\\
\JJ_{\eps}(\ell):=J(S_{\eps}(\ell),\ell)+\frac{1}{2}\|\ell-\ll\|_{H^1(0,T;\lo)}^2  \label{jn}
\end{gather}\end{subequations} \normalsize for all $\ell \in H^1(0,T;\lo)$.
Due to \eqref{s_conv} and Assumption \ref{assu:j_ex},  it holds 
  \begin{equation}\label{j}
  \JJ(\ll)\overset{\eqref{jj}}{=}J(S(\ll),\ll)=\lim_{\eps \to 0} J(S_{\eps}(\ll),\ll)\overset{\eqref{jn}}{=}\lim_{\eps \to 0}  \JJ_{\eps}(\ll) \geq \limsup_{\eps \to 0}   \JJ_{\eps}(\ell_{\eps}),
  \end{equation}\normalsize where for the last inequality we relied on the fact that $\ell_{\eps}$ is a global minimizer of \eqref{eq:q_eps} and that $\ll$ is admissible for \eqref{eq:q_eps}. In view of  \eqref{jn}, \eqref{j} can be continued as 
   \begin{equation}\label{j1}\begin{aligned}
  \JJ(\ll) &\geq \limsup_{\eps \to 0}   J(S_{\eps}(\ell_{\eps}),\ell_{\eps})+\frac{1}{2}\|\ell_{\eps}-\ll\|_{H^1(0,T;\lo)}^2
   \\&\quad   \geq \liminf_{\eps \to 0}   J(S_{\eps}(\ell_{\eps}),\ell_{\eps})+\frac{1}{2}\|\ell_{\eps}-\ll\|_{H^1(0,T;\lo)}^2
      \\&\qquad   \geq    J(S(\widetilde \ell), \widetilde \ell)    +\frac{1}{2}\|\widetilde \ell-\ll\|_{H^1(0,T;\lo)}^2
  \geq   \JJ(\ll),
\end{aligned}  \end{equation}where we used again \eqref{s_conv} in combination with the compact embedding $\hs \embed \embed \llU$, and the continuity of $j$, see Assumption \ref{assu:j_ex}; note that for the last inequality in \eqref{j1} we employed the fact that $\widetilde \ell \in B_{H^1(0,T;\lo )}(\ll,  \rho)$.
From \eqref{j1} we obtain that $\widetilde \ell=\ll$ and 
$$\JJ(\ll) = \lim_{\eps \to 0}   J(S_{\eps}(\ell_{\eps}),\ell_{\eps})+\frac{1}{2}\|\ell_{\eps}-\ll\|_{H^1(0,T;\lo)}^2=J(S(\widetilde \ell), \widetilde \ell)    +\frac{1}{2}\|\widetilde \ell-\ll\|_{H^1(0,T;\lo)}^2.$$Since $J(S_{\eps}(\ell_{\eps}),\ell_{\eps}) \to J(S(\widetilde \ell), \widetilde \ell)    $,
one   has  the convergence  
 \begin{equation}\label{l_conv1}
\ell_{\eps} \to \ll \quad  \text{in }H^1(0,T;\lo), \end{equation}
where we also relied on \eqref{l}.
As a consequence, \eqref{s_conv} yields
\begin{equation}\label{y_conv1}
  S_\varepsilon(\ell_{\varepsilon}) \to S(\ll)  \quad \text{in } \hhyn \times \llu.
  \end{equation}
A classical argument  finally shows that $\ell_\eps$ is a local minimizer of $\min_{\ell \in \hs} \JJ_\eps(\ell)$ for $\eps>0$ sufficiently small.
\end{proofof}
\end{appendix}

\section*{Acknowledgment}This work was supported by the DFG grant BE 7178/3-1 for the project "Optimal Control of Viscous
Fatigue Damage Models for Brittle Materials: Optimality Systems".

\bibliographystyle{plain}
\bibliography{strong_stat}

\end{document}